\begin{document}
 \def\pd#1#2{\frac{\partial#1}{\partial#2}}
\def\dfrac{\displaystyle\frac}
\let\oldsection\section
\renewcommand\section{\setcounter{equation}{0}\oldsection}
\renewcommand\thesection{\arabic{section}}
\renewcommand\theequation{\thesection.\arabic{equation}}

\newtheorem{theorem}{Theorem}[section]

\newtheorem{lemma}[theorem]{Lemma}
\newtheorem{prop}[theorem]{Proposition}
\newtheorem*{con}{Conjucture}
\newtheorem*{questionA}{Question}
\newtheorem*{thmA}{Theorem A}
\newtheorem*{thmB}{Theorem B}
\newtheorem{remark}{Remark}[section]
\newtheorem{definition}{Definition}[section]

\title{Global Stability of a PDE-ODE model for acid-mediated tumor invasion
\thanks{
The first author is  supported by NSF of China (No. 11971498,  12126609).
The second author is supported by the National Key R\&D Program of China  (No. 2020YFA0712500) and NSF of China (No. 12126609). } }

\author{Fang Li\\ {School of Mathematics, Sun Yat-Sen University,}\\{\small No. 135, Xingang Xi Road, Guangzhou 510275, P. R. China.}\\
Zheng-an Yao\\{School of Mathematics, Sun Yat-sen
University,}\\{\small No. 135, Xingang Xi Road, Guangzhou 510275, P. R. China.} \\
Ruijia Yu{\thanks{Corresponding author.
E-mail: yurj5@mail2.sysu.edu.cn}}
\\{School of Mathematics, Sun Yat-sen
University,}\\
{\small No. 135, Xingang Xi Road, Guangzhou 510275, P. R. China.} }
\date{}
\maketitle{}

\begin{abstract}
In this paper, we study the global dynamics of a general reaction-diffusion model  based on acid-mediated  invasion hypothesis, which is a candidate explanation for the Warburg effect. A key feature of this model is the density-limited tumor diffusion term for tumor cells, which might give rise to the degeneracy of the parabolic equation. Our theoretical results characterize the effects of  acid resistance and mutual  competition  of healthy cells and tumor cells on tumor progression in the long term,  i.e.,    whether the healthy cells and tumor cells coexist or the tumor cells prevail after tumor invasion. The approach relies on the construction of suitable Lyapunov functionals  and upper/lower solutions. This paper continues and improves the work begun  in \cite{tao}.
\end{abstract}

{\bf Keywords}: Reaction-diffusion systems; Lyapunov functional; Global stability
\vskip3mm {\bf MSC (2020)}: Primary: 35B35, 35B40, 92C17; Secondary: 35K55, 35K57

\section{Introduction}
In this paper, to understand  tumor progression in the long term, we mainly  investigate the global dynamics of a reaction-diffusion model in cancer invasion   proposed by McGillen et al. in \cite{McGillen}
\begin{equation}\label{model}
	\begin{cases}
	u_t=u\left(1-u-a_2 v\right)-d_1uw, & x \in \Omega, t>0, \\
    v_t=D \nabla \cdot((1-u) \nabla v)+r v\left(1-a_1 u-v\right)-d_2vw, & x \in \Omega, t>0, \\
    w_t=\Delta w+c(v-w), & x \in \Omega, t>0, \\ \partial_\nu v=\partial_\nu w=0, & x \in \partial \Omega, t>0, \\
    u(x, 0)=u_0(x), \ v(x, 0)=v_0(x), \ w(x, 0)=w_0(x), & x \in \Omega,
    \end{cases}
\end{equation}
where $\Omega$ is a smooth and bounded domain in $\mathbb R^n$, $\nu$ denotes the unit outward normal vector  on  $\partial \Omega$, and  $u,v,w$ represent the density functions of healthy cells, tumor cells and lactic acid in tissue microenvironment  respectively. Also,  $D$, $d_1$, $d_2$, $r$, $c$, $a_1$, $a_2$ are positive non-dimensional parameters, where $D$ is the diffusion rate of tumor cells, $d_1$ and $d_2$ are the death rates of healthy cells and tumor cells caused by the lactic acid, respectively, $a_1$ and $a_2$ represent the competition coefficients.

The formulation of the model (\ref{model})  is based on acid-mediated  invasion hypothesis \cite{garenby}, which is a candidate explanation for the Warburg effect \cite{warburg}, a widespread preference in tumors for cytosolic glycolysis rather than oxidative phosphorylation for glucose breakdown.
Altered glucose metabolism in tumor cells plays a critical role in cancer biology, through this process, the tumor cells change the microenvironment by producing acid \cite{gatenby1995}. Microenvironemtal acidosis is toxic to normal cells since it could lead to cellular necrosis and apoptosis\cite{park}\cite{williams}.
The acid-mediated invasion hypothesis is motivated by viewing the tumor as an invasive species, which gains powerful selective advantage by producing lactic acid into the microenvironment since tumor cells acquire resistance to acidification of microenvironment while healthy cells are acid-sensitive \cite{garenby}.

Gatenby and Gawlinski were the first to put this hypothesis into a reaction-diffusion framework \cite{gatenby1996} as follows
 $$
\begin{cases}
	u_t=u\left(1-u\right)-d_1u w, & x \in \Omega, t>0, \\
    v_t=D \nabla \cdot((1-u) \nabla v)+r v\left(1-v\right), & x \in \Omega, t>0, \\
    w_t=\Delta w+c(v-w), & x \in \Omega, t>0 \\ \partial_\nu v=\partial_\nu w=0, & x \in \partial \Omega, t>0, \\
    u(x, 0)=u_0(x), \ v(x, 0)=v_0(x), \ w(x, 0)=w_0(x), & x \in \Omega.
    \end{cases}
$$
A key feature of the Gatenby-Gawlinski model is the density-limited tumor diffusion term in the second equation, which indicates that the tumor will be spatially constrained when the  healthy cells reach the carrying capacity. The studies of the Gatenby-Gawlinski model suggest that  acidity may play an important role in tumor progression \cite{gillies}. In 2006, Gatenby et al. further their work by generalizing the Gatenby-Gawlinski model and comparing the numerical results with the experimental results. In mathematic point of view, their work confirms that the acid-mediated tumor invasion model could make detailed predictions\cite{gatenby2006} .

Later, in order to further the understanding  of acid-mediated invasion and capture a wider range of tumor behaviors which may be
clinically relevant, the model (\ref{model}) is proposed on the basis of the Gatenby-Gawlinski model   by incorporating terms representing the mutual  competition between healthy cells and tumor cells, and the acid-mediated tumor cell death.  In \cite{McGillen}, the invasive behaviors of tumor cells are  characterized by  numerical methods and an asymptotic traveling wave analysis.

Among other things, the
linear stability of the steady states in the model (\ref{model}), which reflects the  invasive and non-invasive behaviors of tumors,  is
analyzed in \cite{McGillen} as follows:
\begin{itemize}
\item a trivial absence of all species, $(u, v, w) = (0, 0, 0)$, globally unstable;
\item a healthy state, $(u,v,w)=(1,0,0)$  linearly unstable if $a_1<1$ and linearly stable if $a_1>1$;
\item  a homogeneous tumor state,
      $$\displaystyle (u,v,w)=
      \left( 0,\left(1+\dfrac{d_2}{r}\right)^{-1},\left(1+\dfrac{d_2}{r}\right)^{-1} \right),$$
    linearly unstable if $\frac{d_2}{r}>a_2+d_1-1$ and linearly stable if $\frac{d_2}{r}<a_2+d_1-1$;
\item a heterogeneous state, $(u,v,w)=(u^*,v^*, w^*)$, where $u^*>0, v^*>0, w^*>0$. Direct computation yields that $(u^*,v^*, w^*)=(1-(a_2+d_1)v_h,v_h,v_h)$, where
    $$
     v_h:=\dfrac{1-a_1}{1-a_1a_2+\dfrac{d_2}{ r} -a_1d_1}, \quad a_1\neq 0.
    $$
    Moreover,
    \begin{itemize}
    \item  if $a_1>1$, then     $u^*,\, v^*,\, w^*$ are positive if and only if  $\frac{d_2}{r}<a_2+d_1-1$.  Also $(u^*,v^*, w^*)$ is linearly unstable;
    \item if $a_1<1$, then $u^*,\, v^*,\, w^*$ are positive if and only if  $\frac{d_2}{r}>a_2+d_1-1$. Moreover,  $(u^*,v^*, w^*)$ is linearly stable.
    \end{itemize}
    \end{itemize}
Based on the  analysis of  linear stability,
when $a_1<1$,  the healthy   state   is locally unstable. Thus tumor invasion could happen, and naturally the heterogeneous state and the homogeneous tumor state are two possible outcomes of the invasive behaviors of tumors.  
To further understand and characterise  tumor progression in the long term,  i.e.,    whether the healthy and tumor cells coexist or the tumor cells prevail after tumor invasion,   we analyze the global dynamics of  the model (\ref{model}).  This issue  is   much more complicated and far from being understood.
In this paper,  we focus on studying the global stability of the three nontrivial steady states:
\begin{itemize}
 \item  the healthy state $(1,0,0)$,
 \item  the homogeneous tumor state  $ \displaystyle\left( 0,\left(1+\dfrac{d_2}{r}\right)^{-1},\left(1+\dfrac{d_2}{r}\right)^{-1} \right)$,
 \item  the heterogeneous state $(u^*,v^*, w^*)$ ,
\end{itemize}
 when they exist and  are locally stable and manage to characterize the ranges of the parameters in the model (\ref{model}) where {\it the local stability implies the global stability.}


\medskip
For clarity, throughout this paper, we always assume that
 the initial data   $u_0,v_0, w_0$ satisfy the condition
    \begin{equation}\label{ini}
	\left\{
    \begin{aligned}
	&    u_0\in W^{2,\infty}(\Omega),\ 0<u_0<1\ \textrm{in}\ \bar{\Omega},\\
	 &   v_0\in W^{2,\infty}(\Omega),\ v_0>0\ \textrm{in}\ \bar{\Omega},\\
	 &   w_0\in W^{2,\infty}(\Omega),\ w_0>0\ \textrm{in} \ \bar{\Omega}.\\
    \end{aligned}
    \right.
    \end{equation}
In \cite{tao}, the global stability of the heterogeneous state $(u^*,v^*, w^*)$ is studied and partial results are obtained.
\begin{thmA}[\cite{tao}]\label{tao}
    	Let $a_1>0$, $a_2>0$, $d_1\geq 0$, $d_2\geq 0$, $r>0$, $D>0$, $c>0$ satisfy
    \begin{equation}\label{basic}
   a_1<1,\	\dfrac{d_2}{r}>a_2+d_1-1\ \textrm{and}
    \end{equation}
    \begin{equation}\label{worse_d_2_2}
    	\dfrac{d_2}{r}<1-a_1a_2-a_1d_1,
    \end{equation}
    then the solution $(u,v,w)$ to the problem $(\ref{model})$  exists globally in time and enjoys the property that
    $$ ||u(\cdot,t)-u^*||_{L^\infty(\Omega)}+||v(\cdot,t)-v^\ast||_{L^\infty(\Omega)}
    +||w(\cdot,t)-w^\ast||_{L^\infty(\Omega)}\rightarrow 0\quad \text{exponentially  as} \quad t\rightarrow\infty.
	$$
    \end{thmA}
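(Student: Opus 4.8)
The plan is to combine a priori bounds from comparison arguments with a weighted entropy-type Lyapunov functional. \textbf{Step 1 (global existence and uniform bounds).} Since the first equation contains no spatial derivatives of $u$, it is an ordinary differential equation in $t$ at each $x$, driven by the nonnegative quantities $v,w$; comparison with the logistic flow $y'=y(1-y)$ together with $0<u_0<1$ yields $0<u(x,t)<1$ for all $t>0$. This strict bound is decisive: it keeps the density-limited diffusion coefficient $D(1-u)$ strictly positive, so the $v$-equation stays (locally uniformly) non-degenerate parabolic, giving local existence by standard theory. I would then bound $w$ from the boundedness of $v$ via the no-flux maximum principle, and construct spatially homogeneous super/sub-solutions (ODE comparison functions) to obtain uniform two-sided bounds $0<\underline u\le u\le\bar u<1$ and $0<\underline v\le v\le \bar v$. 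The upper bound $\bar u<1$ keeps $1-u\ge 1-\bar u>0$ (non-degeneracy), while the positive lower bounds on $u,v$ will later make the logarithmic entropy comparable to a quadratic.

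\textbf{Step 2 (Lyapunov functional).} Writing $\Phi(s,\sigma):=s-\sigma-\sigma\ln(s/\sigma)\ge 0$, I would set
$$E(t)=A\int_\Omega\Phi(u,u^\ast)\,dx+B\int_\Omega\Phi(v,v^\ast)\,dx+\frac{C}{2}\int_\Omega (w-w^\ast)^2\,dx$$
with positive weights $A,B,C$ to be fixed. Differentiating and integrating by parts (using $\partial_\nu v=\partial_\nu w=0$), the diffusion terms contribute $-BDv^\ast\int_\Omega (1-u)|\nabla v|^2/v^2\,dx\le 0$, non-positive \emph{precisely because} $1-u>0$, together with $-C\int_\Omega|\nabla w|^2\,dx\le 0$. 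Using the steady-state identities $1-u^\ast-a_2v^\ast-d_1w^\ast=0$, $1-a_1u^\ast-v^\ast-(d_2/r)w^\ast=0$ and $w^\ast=v^\ast$ to rewrite each reaction term, the remaining zeroth-order part collapses to a pointwise quadratic form
$$\mathcal{Q}(\xi,\eta,\zeta)=-A\xi^2-Br\eta^2-Cc\zeta^2-(Aa_2+Bra_1)\xi\eta-Ad_1\xi\zeta+(Cc-Bd_2)\eta\zeta$$
in $\xi=u-u^\ast,\ \eta=v-v^\ast,\ \zeta=w-w^\ast$.

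\textbf{Step 3 (negative definiteness and decay).} The crux is to choose $A,B,C>0$ making $\mathcal{Q}$ negative definite. A convenient normalization is $Cc=Bd_2$, cancelling the $\eta\zeta$ coupling; negative definiteness then reduces to sign conditions on the principal minors of the associated symmetric matrix, an admissibility inequality for the ratio $B/A$ (which needs $a_1a_2<1$, guaranteed by \eqref{basic}–\eqref{worse_d_2_2}) together with a determinant condition. I expect \eqref{worse_d_2_2} to be exactly the algebraic threshold forcing that determinant to have the correct sign for some admissible $B/A$; pinning this down is the heart of the computation. Once $\mathcal{Q}\le-\delta(\xi^2+\eta^2+\zeta^2)$ is secured, discarding the non-positive gradient terms gives $\frac{dE}{dt}\le-\delta\int_\Omega[(u-u^\ast)^2+(v-v^\ast)^2+(w-w^\ast)^2]\,dx$. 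Because $u,v$ stay in compact subintervals of $(0,\infty)$, one has $\Phi(u,u^\ast)\le K(u-u^\ast)^2$ and $\Phi(v,v^\ast)\le K(v-v^\ast)^2$, so $E\le C_\ast\int_\Omega[\xi^2+\eta^2+\zeta^2]\,dx$; combining this with the dissipation estimate yields $\frac{dE}{dt}\le-\kappa E$ and hence exponential decay of $E$, i.e. of the three $L^2$ norms. To upgrade to $L^\infty$, I would bootstrap the Step 1 bounds through $L^p$ and Schauder estimates to a uniform bound in a stronger norm such as $C^{2+\alpha}$, and a Gagliardo--Nirenberg interpolation converts the exponential $L^2$ decay into exponential $L^\infty$ decay (with rate scaled by the interpolation exponent).

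The main obstacle is twofold. Analytically, the delicate point is the uniform positive lower bound $v\ge\underline v>0$ under the density-limited diffusion, which is needed both to exclude degeneracy in the entropy dissipation and to secure the comparability $E\le C_\ast\int[\xi^2+\eta^2+\zeta^2]$; a persistence/sub-solution argument adapted to the coefficient $D(1-u)$ should give it. Conceptually, the governing difficulty is the negative-definiteness of $\mathcal{Q}$: it is this algebraic requirement, rather than local stability, that forces the restrictive window \eqref{worse_d_2_2}, and I expect the failure of definiteness outside that window to be exactly why only partial (rather than sharp) results are available.
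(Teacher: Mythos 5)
You should first be aware that Theorem~A is quoted from \cite{tao} and is not re-proved in this paper; what the paper records about its proof is that \cite{tao} uses the ``rectangle method'': spatially homogeneous upper and lower ODE solutions sandwiching $(u,v,w)$, with the extra hypothesis (\ref{worse_d_2_2}) imposed precisely so that the upper solution converges to the lower one. Your route --- an entropy-type Lyapunov functional yielding $L^2$ decay, then an upgrade to $L^\infty$ --- is therefore genuinely different from the proof of the cited statement; it is in fact essentially the strategy this paper uses to prove its stronger Theorem~\ref{thm1}. Your quadratic form $\mathcal Q$ is computed correctly, and the Lyapunov part of the plan is sound in outline. However, your closing conjecture is backwards: (\ref{worse_d_2_2}) is \emph{not} the algebraic threshold for negative definiteness of $\mathcal Q$. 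The paper shows that a suitable choice of weights makes the relevant matrix positive definite under the much weaker condition (\ref{d2h1}); the restrictive window (\ref{worse_d_2_2}) comes from the ODE comparison argument of \cite{tao}, not from the Lyapunov functional. (Your normalization $Cc=Bd_2$, which kills the $\eta\zeta$ coupling, is also suboptimal; the paper leaves the analogue of $C$ free and optimizes, and that freedom is what produces the sharp range.)

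There are two genuine gaps in the analytic part of your plan. First, the uniform bounds of Step~1 are asserted rather than obtained: the logistic supersolution $\bar u$ with $\bar u(0)=\max u_0<1$ satisfies $\bar u(t)\nearrow 1$, so $1-u$ is \emph{not} bounded away from zero uniformly in time, and a time-uniform positive lower bound $\underline v>0$ is the single hardest ingredient of the whole argument --- in this paper it is only obtained \emph{after} the $L^\infty$ convergence of $w$ has been established, via a delicate analysis of the auxiliary ODE system (Lemma~\ref{ode_decay_h}); it cannot be presupposed in Step~1. (Fortunately the Lyapunov dissipation does not need non-degeneracy, since the term $-BDv^*\int_\Omega(1-u)|\nabla v|^2/v^2$ can simply be discarded; but your comparability bound $E\le C_*\int(\xi^2+\eta^2+\zeta^2)$ does need $u,v$ bounded away from $0$.) Second, the $L^\infty$ upgrade via Schauder plus Gagliardo--Nirenberg works only for $w$, whose equation is uniformly parabolic. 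It fails for $u$, which satisfies a pointwise ODE with no smoothing, and for $v$, whose diffusion coefficient $D(1-u)$ may degenerate; the paper states this obstruction explicitly and circumvents it by sandwiching $u$ and $v$ between solutions of the auxiliary ODE system (Lemmas~\ref{clamp_station}--\ref{ode_clamp_solution}) and proving that the upper and lower ODE solutions converge to each other. Without that (or an equivalent substitute), your argument delivers only $L^2$ convergence of $u$ and $v$, not the $L^\infty$ convergence claimed in Theorem~A.
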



\medskip

Our first main result greatly improve Theorem A regarding the global stability of the heterogeneous state $(u^*,v^*, w^*)$.
    \begin{theorem}\label{thm1}
    In the system $(\ref{model})$, assume that the non-dimensional parameters $D$, $d_1$, $d_2$, $r$, $c$, $a_1$, $a_2$ are positive, the initial data  $(u_0,v_0,w_0)$ satisfies $(\ref{ini})$,  $a_1<1$  and	$a_1a_2<1$. Also
    	if one of the following assumptions holds:
    	\begin{enumerate}[(i)]
    		\item
    		\begin{equation}\label{d2h1}
    			d_1\leq d^h_1 \quad\text{and}\quad
    			\dfrac{d_2}{r}>d_1+a_2-1,
    		\end{equation}
    		\item
    		\begin{equation}\label{d2h2}
    			d_1 > d^h_1 \quad\text{and}\quad
    			\dfrac{d_2}{r}> d^h_2,
    		\end{equation}
    	\end{enumerate}
    	where
    	\begin{equation}\label{d1h}
    		d^h_1=\left(\dfrac{1+\sqrt{1-a_1a_2}}{1-\sqrt{1-a_1}}\right)^2-a_2,
    	\end{equation}
    	
    	\begin{equation}\label{d^h_2}
    		d^h_2=\dfrac{1}{4}\left(
    \frac{a_1(a_2+d_1)}{1+\sqrt{1-a_1a_2}}+1+\sqrt{1-a_1a_2}
    	\right)^2-1,
    	\end{equation}
    	then the solution $(u,v,w)$ to the system $(\ref{model})$  exists globally and enjoys the property that
$$  ||u(\cdot,t)-u^*||_{L^\infty(\Omega)}+||v(\cdot,t)-v^\ast||_{L^\infty(\Omega)}
    +||w(\cdot,t)-w^\ast||_{L^\infty(\Omega)}\rightarrow 0\quad \text{exponentially  as} \quad t\rightarrow\infty.
    $$
    \end{theorem}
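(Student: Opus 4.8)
The plan is to prove global exponential convergence by means of a weighted Lyapunov functional, choosing the three weights so that a single pointwise quadratic form becomes negative definite exactly when (\ref{d2h1}) or (\ref{d2h2}) holds; this is where the improvement over Theorem A will come from.

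\emph{Step 1 (a priori bounds).} I would first record the preliminary estimates that the degenerate parabolic system requires, taking them from the well-posedness theory already available for (\ref{model}). Since the first equation is an ODE in time at each fixed $x$, the bounds $0<u<1$ are propagated from the initial data $(\ref{ini})$; in particular $1-u\ge 0$, which is exactly what tames the degenerate diffusion $D\nabla\cdot((1-u)\nabla v)$ after an integration by parts. Comparison arguments for the second and third equations then yield global existence, uniform positive lower bounds $v,w\ge\delta>0$ and uniform upper bounds. These bounds make the relative entropies below well defined and, crucially, comparable to the squared deviations $(u-u^*)^2$ and $(v-v^*)^2$.

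\emph{Step 2 (Lyapunov functional).} Writing $U=u-u^*$, $V=v-v^*$, $W=w-w^*$, I consider
$$E(t)=\int_\Omega\Big[A\big(u-u^*-u^*\ln\tfrac{u}{u^*}\big)+B\big(v-v^*-v^*\ln\tfrac{v}{v^*}\big)+\tfrac{C}{2}W^2\Big]\,dx,$$
with weights $A,B,C>0$ to be fixed later; then $E\ge 0$, with equality iff $(u,v,w)\equiv(u^*,v^*,w^*)$. Differentiating along the flow, using the Neumann conditions and the steady-state identities $1-u^*-a_2v^*-d_1w^*=0$, $r(1-a_1u^*-v^*)-d_2w^*=0$, $v^*=w^*$, the diffusion terms become the manifestly nonpositive quantities $-BDv^*\int_\Omega(1-u)\frac{|\nabla v|^2}{v^2}$ and $-C\int_\Omega|\nabla w|^2$ (this uses $1-u\ge0$ from Step 1), while the reaction terms collapse to a pointwise quadratic form, giving
$$\frac{dE}{dt}\le -\int_\Omega (U,V,W)\,M\,(U,V,W)^{\mathsf T}\,dx,$$
where $M$ is the symmetric matrix with diagonal $(A,Br,Cc)$ and off-diagonal entries $M_{12}=\frac{Aa_2+Bra_1}{2}$, $M_{13}=\frac{Ad_1}{2}$, $M_{23}=\frac{Bd_2-Cc}{2}$. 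Everything then reduces to choosing $A,B,C>0$ making $M$ positive definite.

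\emph{Step 3 (the weight optimization --- the crux).} I normalize $A=1$, set $Br=t$ and treat $Cc=\gamma$ as free. The key idea is \emph{not} to cancel the $VW$-coupling (the naive choice $\gamma=Bd_2$), but to use $\gamma$ to annihilate the off-diagonal entry of the Schur complement of $M_{11}=1$; this forces $\gamma=t\frac{d_2}{r}-\frac{d_1(a_2+ta_1)}{2}$. Positive definiteness of $M$ then reduces to the two scalar inequalities $t>\frac{(a_2+ta_1)^2}{4}$ and $\gamma>\frac{d_1^2}{4}$. The first holds precisely for $t$ strictly between $t_\pm=\big(\frac{1\pm\sqrt{1-a_1a_2}}{a_1}\big)^2$ (an interval that is nonempty exactly because $a_1a_2<1$), and the second reads $\frac{d_2}{r}>F(t):=\frac{d_1}{2}\big(\frac{a_2+d_1/2}{t}+a_1\big)$. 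Since $F$ is decreasing, I optimize by letting $t\uparrow t_+$, and a direct computation involving $\sqrt{1-a_1a_2}$ gives the identity $\lim_{t\to t_+}F(t)=d^h_2$; hence $M\succ0$ is attainable as soon as $\frac{d_2}{r}>d^h_2$. Finally, comparing $d^h_2$ with the existence threshold $a_2+d_1-1$ (needed anyway for $(u^*,v^*,w^*)$ to be positive) gives $d^h_2\lessgtr a_2+d_1-1$ according as $d_1\lessgtr d^h_1$, which is exactly the dichotomy (\ref{d2h1})--(\ref{d2h2}): the regime $d_1\le d^h_1$ needs only existence, while $d_1>d^h_1$ needs the sharper bound $\frac{d_2}{r}>d^h_2$. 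I expect the two algebraic identities --- $\lim_{t\to t_+}F=d^h_2$ and $d^h_2|_{d_1=d^h_1}=a_2+d^h_1-1$ --- to be the main obstacle, the rest being structural.

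\emph{Step 4 (convergence and rate).} With $M\succ0$ one has $\frac{dE}{dt}\le-\kappa\int_\Omega(U^2+V^2+W^2)$ for some $\kappa>0$; integrating and using $E\ge0$ gives $\int_0^\infty\!\!\int_\Omega(U^2+V^2+W^2)<\infty$. Uniform-in-time parabolic and Hölder estimates on the solution (with $u$ controlled through its ODE and the spatial regularity of $v,w$) render this integrand uniformly continuous in $t$, so Barbalat's lemma forces $\int_\Omega(U^2+V^2+W^2)\to0$, which upgrades to $L^\infty$-convergence by interpolation against the uniform higher-order bounds. For the exponential rate, the lower bounds from Step 1 yield $E\le C'\int_\Omega(U^2+V^2+W^2)$, hence $\frac{dE}{dt}\le-\frac{\kappa}{C'}E$ and $E(t)\lesssim e^{-\lambda t}$; transferring this back through the norm equivalences and interpolation gives the claimed exponential decay in $L^\infty(\Omega)$.
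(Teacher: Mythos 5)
Your Steps 2 and 3 reproduce the paper's core Lyapunov computation: the functional is the same $E_h$, and your Schur-complement optimization of the weights is an equivalent (and arguably cleaner) repackaging of the paper's analysis of the principal minors of the matrix $\mathbb P_h$ in Lemma \ref{uvwh}; the identities you flag as the main obstacle --- $\lim_{t\to t_+}F(t)=d^h_2$ and the comparison of $d^h_2$ with $a_2+d_1-1$ at $d_1=d^h_1$ --- are correct and are exactly the computations carried out there. (One caveat: choosing $\gamma$ to kill the off-diagonal entry of the Schur complement is a pointwise stronger requirement than the paper's condition $\Phi_h(\beta_h)>0$, which corresponds to the optimal choice; the two nevertheless yield the same threshold $d^h_2$ because the discrepancy, $\Psi_h(t)/4$, vanishes as $t\to t_+$.)

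The genuine gaps are in Steps 1 and 4, and they concern precisely what makes this system hard. First, the uniform positive lower bounds asserted in Step 1 do not follow from ``comparison arguments'': since $\frac{d_2}{r}\sup w$ need not be small, the naive ODE subsolution for $v$ tends to $0$, and you claim no lower bound for $u$ at all --- yet a bound $u\ge\delta>0$ is indispensable both for the upper comparability $u-u^*-u^*\ln(u/u^*)\le C(u-u^*)^2$ and hence for the inequality $E\le C'\int_\Omega(U^2+V^2+W^2)$ on which your exponential rate rests. In the paper these lower bounds are a hard-won consequence of the auxiliary ODE system (\ref{odeh}) and the argument surrounding (\ref{claim_uv}), and they become available only \emph{after} the $L^\infty$ convergence of $w$ is known; assuming them a priori is circular. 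Second, and more seriously, the $L^2\to L^\infty$ upgrade ``by interpolation against the uniform higher-order bounds'' fails for $u$ and $v$: the $u$-equation has no diffusion, so $\nabla u$ is driven by $\nabla v$ and admits no uniform-in-time bound, and the $v$-equation carries the degenerate coefficient $1-u$, so no uniform $W^{1,\infty}$ or H\"older estimate for $v$ is available. Interpolation works only for $w$, via the smoothing of $e^{t(\Delta-c)}$ as in Lemma \ref{wdecay_h}. This is exactly why the paper does not argue as you do, but instead constructs spatially homogeneous upper and lower solutions $(\bar u_h,\underline u_h,\bar v_h,\underline v_h)$ of (\ref{odeh}), shows they sandwich $u$, $v$ and $u^*$, $v^*$ (Lemmas \ref{clamp_station} and \ref{ode_clamp_solution}), and proves the sandwich collapses exponentially once $\|w-w^*\|_{L^\infty(\Omega)}$ decays exponentially. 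Without a substitute for this rectangle argument, your scheme yields exponential convergence of $u$ and $v$ only in $L^2$, not in $L^\infty$.
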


As discussed earlier,  the heterogeneous state $(u^*,v^*, w^*)$ with $u^*>0,\, v^*>0,\, w^*>0$ exists and is linearly stable if and only if
$$
a_1<1,\	\dfrac{d_2}{r}>a_2+d_1-1.
$$
Hence  the case (i) in Theorem \ref{thm1} indicates that  if
$$
a_1<1,	\ a_1a_2<1,\ d_1\leq d_1^h,
$$
then the local stability of the heterogeneous state implies its global stability.
Moreover,   in Theorem A,
to guarantee that there exists    $d_2>0$ such that  the conditions (\ref{basic}) and (\ref{worse_d_2_2}) are fulfilled, i.e.,
$$
a_2+d_1-1< \dfrac{d_2}{r}<1-a_1a_2-a_1d_1,
$$
the conditions
$$
a_1a_2<1,\ d_1<\dfrac{2}{1+a_1}-a_2
$$
are necessary. Then  elementary computations yield that when $a_1<1$,
$$
d_1^h=\left(\dfrac{(1+\sqrt{1-a_1a_2})(1+\sqrt{1-a_1})}{a_1}\right)^2-a_2
> \left(\dfrac{1}{a_1}\right)^2-a_2>\frac{2}{1+a_1}-a_2.
$$
The condition (\ref{worse_d_2_2})  in Theorem A also  imposes an upper bound for $d_2$, while this   is not required in (\ref{d2h1}).
Hence the case (i) in Theorem \ref{thm1} improves Theorem A.

Furthermore, the case (ii) that $d_1 > d_1^h$ in  Theorem \ref{thm1} demonstrates that the global stability of the heterogeneous state is still valid when both $d_1$ and $d_2$ are relatively large, i.e., $d_1 > d^h_1$ and	$\frac{d_2}{r}> d^h_2$.  This range is not mentioned in Theorem A.
We also point out that  in the case (ii), the global stability of the heterogeneous state in the range
$$
a_1<1,	\ a_1a_2<1,\ d_1 > d^h_1,\ d_1+a_2 -1 < \dfrac{d_2}{r} \leq d^h_2
$$
is still not clear.

Biologically, Theorem \ref{thm1} demonstrates that the introduction of acid-mediated tumor cell death could result in the coexistence of tumor and healthy cells as long as the healthy cells are not very aggressive in consuming resources, i.e., $a_1<1$ and  the tumor cells are quite sensitive to acid, i.e., $d_2$ is relatively large compared with $d_1$.

\medskip
Our second main result is about the global stability of the homogeneous tumor state, denoted by
$$(0,\tilde v, \tilde w) = \left( 0,\left(1+\dfrac{d_2}{r}\right)^{-1},\left(1+\dfrac{d_2}{r}\right)^{-1} \right).
$$
    \begin{theorem}\label{thm2}
    	In the system $(\ref{model})$, assume that the non-dimensional parameters $D$, $d_1$, $d_2$, $r$, $c$, $a_1$, $a_2$ are positive, the initial data  $(u_0,v_0,w_0)$ satisfies $(\ref{ini})$ and  $a_1<1$.
    	Moreover, if one of the following situations holds:
    	\begin{enumerate}[(i)]
    		\item \begin{equation}\label{thm21}
    		d_1\leq d^c_1\quad
    		           \text{and}\quad
    		          \dfrac{d_2}{r}<\dfrac{d_1+a_2}{\max \{a_1a_2,1\}} -1,
    	          \end{equation}
    	    \item \begin{equation}\label{thm22}
    	    	a_1a_2<1,\quad
    	    	d^c_1<d_1\leq d^h_1\quad
    	    	\text{and}\quad
    	    	\dfrac{d_2}{r}<d_1+a_2-1,
    	    \end{equation}
    	    \item \begin{equation}\label{thm23}
    	    	a_1a_2<1,\quad
    	    	d_1>d^h_1\quad
    	    	\text{and}\quad
    	    	\dfrac{d_2}{r} < d^c_2,
    	    	\end{equation}
    	    	\item \begin{equation}\label{thm24}
    	    	a_1a_2 \geq 1,\quad
    	    	d_1>d^c_1 \quad
    	    	\text{and}\quad
    	    	\dfrac{d_2}{r} < d^c_2,
    	    \end{equation}
    	\end{enumerate}
    	where $d^h_1$ is defined in (\ref{d1h}),
    	\begin{equation}\label{d^c_1}
    		d^c_1=\dfrac
    	{a_1a_2}
    	{(1-\sqrt{1-a_1})^2}-a_2,
    	\end{equation}
    	and
    	\begin{equation}\label{d^c_2}
    		d^c_2=4
    		\left (1-\sqrt{1-a_1} +\dfrac{1}{1-\sqrt{1-a_1}}\dfrac{a_1a_2}{a_2+d_1}
    		\right)^{-2} -1,
    	\end{equation}
    	then the solution $(u,v,w)$ to the system $(\ref{model})$  exists globally  and satisfies the property that
	$$  ||u(\cdot,t)||_{L^\infty(\Omega)}+||v(\cdot,t)-\tilde v||_{L^\infty(\Omega)}
    +||w(\cdot,t)-\tilde w||_{L^\infty(\Omega)}\rightarrow 0\quad \text{exponentially  as} \quad t\rightarrow\infty.
    $$
    \end{theorem}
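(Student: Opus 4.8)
The plan is to combine sharp \emph{a priori} bounds with a weighted Lyapunov functional adapted to the homogeneous tumor state $(0,\tilde v,\tilde w)$, where $\tilde v=\tilde w=\frac{r}{r+d_2}$. First I would record the basic estimates: global existence and boundedness of $(u,v,w)$, the invariance $0<u<1$ (by comparing $u$ with the logistic equation $\bar u_t=\bar u(1-\bar u)$ and using $0<u_0<1$), and uniform upper bounds for $v,w$ from the scalar comparison $v_t\le D\nabla\cdot((1-u)\nabla v)+rv(1-v)$ together with $\limsup w\le\limsup v$. The decisive preliminary step is a \emph{uniform positive lower bound} $\liminf_{t\to\infty}\min_{\bar\Omega}v\ge\underline v>0$: this persistence property is exactly where the standing hypothesis $a_1<1$ enters, since it makes the healthy boundary state $(1,0,0)$ repelling in the $v$-direction (linearizing the $v$-equation near $u\equiv1,\ v\equiv0$ gives growth rate $r(1-a_1)>0$). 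Such a lower bound is needed to keep the relative-entropy density for $v$ well behaved and comparable to $(v-\tilde v)^2$.

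Next I would introduce the functional
\[
E(t)=\int_\Omega\Big[A\,u+B\big(v-\tilde v-\tilde v\ln\tfrac{v}{\tilde v}\big)+C\,(w-\tilde w)^2\Big]\,dx,\qquad A,B,C>0,
\]
which is non-negative and vanishes only at the target state. Writing $\phi=v-\tilde v$, $\psi=w-\tilde w$ and differentiating, integration by parts (using $\partial_\nu v=\partial_\nu w=0$) produces the dissipation $-BD\tilde v\int_\Omega(1-u)\frac{|\nabla v|^2}{v^2}\,dx-2C\int_\Omega|\nabla w|^2\,dx\le0$, where the first term is non-positive \emph{precisely because} $0<u<1$. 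Using the equilibrium relations $\tilde v=\tilde w$ and $r(1-\tilde v)=d_2\tilde w$, together with the choice $C=\frac{B d_2}{2c}$ (which cancels the $\phi\psi$ cross terms coming from the $v$- and $w$-reactions), the reaction part of $E'(t)$ reduces to the spatial integral of
\[
R=A\mu\,u-A u^2-B r\,\phi^2-B d_2\,\psi^2-(A a_2+B r a_1)\,u\phi-A d_1\,u\psi,
\]
where $\mu:=1-(a_2+d_1)\tilde v$. Each of the four hypotheses \eqref{thm21}--\eqref{thm24} is arranged so that $\frac{d_2}{r}<a_2+d_1-1$, hence $\mu<0$, and the linear term $A\mu u$ provides damping in the direction in which $u$ becomes extinct.

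The crux is to show that, in each case, one can choose the ratio $A/B$ so that $R\le-\gamma(u+\phi^2+\psi^2)$ for some $\gamma>0$. Absorbing the cross terms by Young's inequality, $(Aa_2+Bra_1)|u\phi|\le\frac{Br}{2}\phi^2+\frac{(Aa_2+Bra_1)^2}{2Br}u^2$ and $Ad_1|u\psi|\le\frac{Bd_2}{2}\psi^2+\frac{A^2d_1^2}{2Bd_2}u^2$, leaves $R\le A\mu u-\frac{Br}{2}\phi^2-\frac{Bd_2}{2}\psi^2$ whenever the residual $u^2$-coefficient is non-positive. Feasibility of this last inequality over $A/B>0$ is an elementary but delicate optimization whose boundaries are exactly the thresholds in the statement: the $(u,\phi)$-block can be made definite iff $a_1a_2\le1$ (the discriminant of the associated quadratic in $A/B$ equals $r^2(1-a_1a_2)$, explaining the $\sqrt{1-a_1a_2}$ in $d^h_1$ and the $\max\{a_1a_2,1\}$ in case (i)), the $u$--$\psi$ coupling is controlled by $d_1$ and produces $d^c_1,d^h_1$ through the factor $(1-\sqrt{1-a_1})^{-1}$, while in the regimes where pure definiteness fails (large $d_1$, or $a_1a_2\ge1$) one must instead exploit the linear damping $A\mu u$, which succeeds only when $|\mu|$ is large enough, i.e. $\frac{d_2}{r}<d^c_2$. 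I expect this case-by-case definiteness analysis, and in particular the identification of the optimal multipliers yielding the sharp constants $d^c_1,d^h_1,d^c_2$, to be the main obstacle, with the persistence lower bound a close second.

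Finally, once $R\le-\gamma(u+\phi^2+\psi^2)$ is secured, the bounds $\underline v\le v\le\overline v$ give $v-\tilde v-\tilde v\ln\frac{v}{\tilde v}\le K\phi^2$, so $E\le C_1\int_\Omega(u+\phi^2+\psi^2)\,dx$ and hence $E'(t)\le-\gamma' E(t)$, yielding exponential decay of $E$. This forces $\int_\Omega u\to0$ (so $u\to0$), $\|v-\tilde v\|_{L^2(\Omega)}\to0$ and $\|w-\tilde w\|_{L^2(\Omega)}\to0$ exponentially; a standard bootstrap using parabolic smoothing and the uniform higher-order bounds then upgrades the convergence to the $L^\infty(\Omega)$ statement claimed.
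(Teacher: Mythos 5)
Your Lyapunov functional is essentially the one the paper uses (linear in $u$, relative entropy in $v$, quadratic in $w$), and the $L^2$-dissipation part of your plan is sound in outline, but there are two genuine gaps. The more serious one is the endgame: you cannot ``upgrade the convergence to $L^\infty$ by a standard bootstrap using parabolic smoothing'' for $u$ and $v$. The $u$-equation has no diffusion at all --- it is an ODE at each point $x$, so $L^2$ decay of $u$ yields no spatial modulus of continuity --- and the diffusion coefficient $D(1-u)$ in the $v$-equation is not a priori bounded away from zero, so parabolic regularity is available only for $w$. The paper devotes the second half of its Section 4 precisely to this obstruction: it constructs spatially homogeneous upper/lower solutions $(\bar u_c,\bar v_c,\underline v_c)$ of an auxiliary ODE system in which $w$ is replaced by $\tilde w\pm\gamma(t)$, proves that they sandwich $(u,v)$ pointwise, and then runs a phase-plane analysis of the resulting competitive ODE system --- verifying under each of the four cases (i)--(iv) that the auxiliary system has no positive steady state, so its trajectories converge to the semi-trivial state --- to conclude $\|u\|_{L^\infty}+\|v-\tilde v\|_{L^\infty}\to0$ and to extract the exponential rate. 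Your proposal has no substitute for this step, and the hypotheses of the theorem are invoked a second time there, not only in the Lyapunov computation.

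Second, on the Lyapunov side your specific reductions will not reach the stated constants. Fixing $C=Bd_2/(2c)$ to cancel the $\phi\psi$ cross term pins the weight on $\int_\Omega(w-\tilde w)^2$ at a generally suboptimal value; the paper keeps this weight $\eta_c$ free and chooses it at the vertex of a quadratic in $\eta_c$ arising from $\det\mathbb P_c$, which is not the value that annihilates the cross term. Likewise, absorbing the two remaining cross terms by separate Young inequalities with the fixed budgets $\tfrac{Br}{2}\phi^2$ and $\tfrac{Bd_2}{2}\psi^2$ is strictly cruder than positive definiteness of the full $3\times3$ form. The paper also converts the linear damping $A\mu u$ into an enlarged quadratic coefficient $\delta\int_\Omega u^2$ with $\delta=(a_2+d_1)(1+\tfrac{d_2}{r})^{-1}>1$ via $u\le 1$, which makes the form homogeneous and lets the thresholds $d_1^c$, $d_1^h$, $d_2^c$ fall out of exact discriminant computations for two quadratics in the remaining free ratio. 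Finally, the uniform persistence bound $\liminf_{t\to\infty}\min_{\bar\Omega}v>0$ that you place at the front does not follow from a linearization at $(1,0,0)$; in the paper it is obtained a posteriori from the lower ODE solution $\underline v_c\to\tilde v$, and it is needed only for the final rate, not for integrating $E'\le-\varepsilon F$.
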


Recall that  the homogeneous tumor state is linearly stable if $\frac{d_2}{r}<d_1+a_2-1$.  Also notice that when $a_1<1$, $a_1a_2=1$, $d_1^c$ coincides with $d_1^h$. Thus according to the cases (i) and (ii) in Theorem \ref{thm2}, when
$$a_1<1,\ a_1a_2\leq 1,\ d_1\leq d^h_1,$$
 the local stability of the homogeneous tumor state implies the global stability.  While the global stability   in the range
 $$
 a_1<1,	\ a_1a_2\leq 1,\ d_1 > d^h_1,\  d_2^c\leq  \dfrac{d_2}{r} < d_1+a_2 -1
 $$
 is still unknown.

Moreover, when
$$a_1<1,\ a_1a_2 > 1,$$
partial results concerning the global stability of the homogeneous tumor state  are also obtained in Theorem \ref{thm2}. Simply speaking, in this situation, when $d_2$ is relatively small, the homogeneous tumor state is globally stable.

From the view of biology, Theorem \ref{thm2} demonstrates  that as long as the tumor has strong resistance to acid, i.e., $d_2$ is small enough, and the healthy cells are more sensitive to the acid, then the tumor cells can invade the healthy state and eliminate  the healthy cells completely regardless of the competition coefficient $a_2$ for tumor cells.  This confirms that the acid resistance abilities of healthy and tumor cells  play a   significant   role in tumor progression.

In summary, when $a_1<1$, the healthy state can be invaded by tumor cells,  and then Theorems \ref{thm1} and \ref{thm2} together characterize  how   acid resistance and mutual competition of tumor and healthy cells  determine whether tumor cells dominate or two types of cells coexist at reduced densities in the long term.
It is also worth pointing out that if
$$
a_1>1, \, \frac{d_2}{r}<d_1+a_2-1,
$$
then both the healthy state and the homogeneous tumor state are locally stable. Biologically, this is related to the situation that the healthy cells are aggressive in  competition and  the tumor cells have strong acid resistance ability, and thus the tumor progression is expected to be delicate.

\medskip
Our last main result is about the global stability of the healthy state $(1,0,0)$.

    \begin{theorem}\label{thm3}
	In the system $(\ref{model})$, assume that the non-dimensional parameters $D$, $d_1$, $d_2$, $r$, $c$, $a_1$, $a_2$ are positive, the initial data  $(u_0,v_0,w_0)$ satisfies $(\ref{ini})$,  $v_0\leq 1$,  and  $a_1>1$, $a_2<1$.
    	Moreover, if one of the following situations holds:
    	\begin{enumerate}[i)]
    		\item \begin{equation}\label{d2_r_1}
    		           d_1\leq d^r_1\quad \text{and}\quad
    		          \dfrac{d_2}{r}>a_1(d_1+a_2-1),
    	          \end{equation}
    	    \item \begin{equation}\label{d2_r_2}
    	    	d_1>d^r_1\quad \text{and}\quad
    	    	\dfrac{d_2}{r} > d^r_2,
    	    \end{equation}
    	\end{enumerate}
    	where
    		\begin{equation}\label{d1r}
    			d^r_1=(1+\sqrt{1-a_2})^2-a_2,
    		\end{equation}
    		\begin{equation}\label{d2r}
    			d^r_2=\dfrac{a_1}{4}\left(
    \frac{a_2+d_1}{1+\sqrt{1-a_2}}+1+\sqrt{1-a_2}
    	\right)^2-a_1,
    		\end{equation}
    	then the solution $(u,v,w)$ of the system (\ref{model})  exists globally   and  enjoys the property that
	$$  ||u(\cdot,t)-1||_{L^\infty(\Omega)}+||v(\cdot,t)||_{L^\infty(\Omega)}
+||w(\cdot,t)||_{L^\infty(\Omega)}\rightarrow 0\quad \text{exponentially  as} \quad t\rightarrow\infty.
$$
    \end{theorem}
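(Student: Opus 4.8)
The plan is to combine an invariant-region/comparison argument for global existence with a single Lyapunov functional whose dissipation is controlled by the stated thresholds. First I would record the invariant region. Since $u_t=u(1-u-a_2v-d_1w)$, the set $\{0<u<1\}$ is invariant: $u=0$ solves the $u$-equation so positivity persists, and $u_t\le u(1-u)$ forces $u<1$ by comparison with the logistic ODE. For $v$, at a putative interior maximum with $v=1$ the reaction is $r(1-a_1u-1)-d_2w=-ra_1u-d_2w\le0$, while the density-limited diffusion reduces to $D(1-u)\Delta v\le0$ there (since $\nabla v=0$, $\Delta v\le0$, $1-u\ge0$); this is exactly where the hypothesis $v_0\le1$ enters, giving $0<v\le1$. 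Then $w_t=\Delta w+c(v-w)$ yields $0<w\le\max\{1,\|w_0\|_\infty\}$. On any finite time interval $u<1$ keeps $1-u$ bounded away from $0$, so the $v$-equation is uniformly parabolic and standard theory gives local existence, which the uniform bounds extend to a global bounded solution; the only degeneracy, $1-u\to0$, occurs precisely in the limit $u\to1$ we are trying to establish.

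Next I would introduce $E(t)=\int_\Omega\bigl[\alpha(u-1-\ln u)+\beta v+\tfrac{\gamma}{2}w^2\bigr]\,dx$ with weights $\alpha,\beta,\gamma>0$ to be fixed. Using $(u-1-\ln u)'=(u-1)/u$, the Neumann conditions to discard $\int_\Omega\nabla\cdot((1-u)\nabla v)=0$ and to write $\int_\Omega w\Delta w=-\int_\Omega|\nabla w|^2$, and setting $p=1-u\ge0$, a direct computation gives
\[
\frac{dE}{dt}=\int_\Omega\Big[-\alpha p^2-\beta r v^2-\gamma c w^2+(\alpha a_2+\beta r a_1)\,pv+\alpha d_1\,pw+(\gamma c-\beta d_2)\,vw-\beta r(a_1-1)v-\gamma|\nabla w|^2\Big]\,dx.
\]
The decisive structural features are the beneficial linear damping $-\beta r(a_1-1)v$, available precisely because $a_1>1$, and the coupling $(\gamma c-\beta d_2)vw$, whose sign I can force nonpositive by taking $\beta d_2\ge\gamma c$ --- this is where $d_2$ large helps and where the lower bounds on $d_2/r$ originate.

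The remaining enemies are the two nonnegative cross terms $(\alpha a_2+\beta r a_1)pv$ and $\alpha d_1\,pw$, both drawing on the single reservoir $-\alpha p^2$. The $(p,w)$ pair is controlled through $-\alpha p^2+\alpha d_1\,pw-\gamma c w^2$, while the $pv$ term I would dominate by splitting $pv\le\tfrac{\lambda}{2}p^2+\tfrac{1}{2\lambda}v^2$ and then using $v^2\le v$ (valid since $v\le1$) to feed it into the linear damping $-\beta r(a_1-1)v$. Optimizing the free weight ratios under these two competing demands is what produces the thresholds: the borderline $d_1=d_1^r$ is exactly where $a_2+d_1=(1+\sqrt{1-a_2})^2$, i.e.\ where the natural choice $1+\sqrt{1-a_2}=\sqrt{a_2+d_1}$ first becomes admissible, so for $d_1\le d_1^r$ the linear-stability-type bound $d_2/r>a_1(d_1+a_2-1)$ already suffices, whereas for $d_1>d_1^r$ the enlarged $pw$ term forces the strictly larger requirement $d_2/r>d_2^r$; one checks the two thresholds agree at $d_1=d_1^r$ (both equal $a_1\sqrt{1-a_2}\,(2+\sqrt{1-a_2})$). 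With the weights so chosen one obtains $\frac{dE}{dt}\le-\delta\int_\Omega[p^2+v+w^2]\le-\delta'E$.

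Finally, Gr\"onwall gives $E(t)\le E(0)e^{-\delta' t}$, so $\int_\Omega(1-u)^2$, $\int_\Omega v$ and $\int_\Omega w^2$ decay exponentially (using $u-1-\ln u\ge c(1-u)^2$ on $(0,1]$); I would then upgrade this integral decay to the claimed $L^\infty$ exponential convergence by interpolating against uniform-in-time H\"older bounds from parabolic regularity. I expect the main obstacle to be twofold. Analytically, securing the uniform-in-time regularity needed for this last upgrade is delicate because the diffusion $(1-u)\nabla v$ degenerates as $u\to1$, precisely along the convergence. Computationally, the bookkeeping that pins the absorption of the cross terms to the \emph{sharp} constants $d_1^r$ and $d_2^r$ is the main grind, since the $a_1>1$ damping, the bound $v\le1$, and the optimal weight ratio must be balanced simultaneously rather than one at a time.
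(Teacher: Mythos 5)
Your Lyapunov functional is exactly the paper's $E_r=A_r+\tfrac{\beta_r}{r}B_r+\tfrac{\eta_r}{c}C_r$, your computation of $\tfrac{dE}{dt}$ is correct, and you correctly identify the two structural ingredients (the linear damping $-\beta r(a_1-1)\int v$ from $a_1>1$, and $v\le 1$ from $v_0\le 1$ to trade $v^2$ against $v$); your check that the two thresholds coincide at $d_1=d_1^r$ is also right. But there is a genuine gap in the second half. Your plan to upgrade the integral decay to $L^\infty$ convergence of $u$ and $v$ ``by interpolating against uniform-in-time H\"older bounds from parabolic regularity'' cannot work for these two components: $u$ satisfies a pointwise ODE with no diffusion, so there is no smoothing and no uniform spatial equicontinuity to interpolate against, and the diffusion $(1-u)\nabla v$ degenerates exactly in the limit $u\to 1$ you are proving, so uniform parabolic estimates for $v$ are unavailable. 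You name this obstacle but offer no way around it. The paper's proof supplies the missing mechanism: after obtaining $\|w\|_{L^\infty}\to 0$ (the only component for which the regularity/interpolation route works), it constructs the auxiliary competitive ODE system \eqref{oder}--\eqref{inioder}, shows $\underline u_r(t)\le u(x,t)<1$ and $0<v(x,t)\le\bar v_r(t)$, and then uses the phase-plane analysis of planar competitive ODEs (no interior equilibrium, the relevant nullcline ordering) to get $(\underline u_r,\bar v_r)\to(1,0)$, which forces the spatially uniform convergence of $(u,v)$. Some substitute for this rectangle/comparison argument is indispensable.

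A second, related gap: your Gr\"onwall step $\tfrac{dE}{dt}\le-\delta'E$ requires $\int_\Omega(u-1-\ln u)\,dx\le C\int_\Omega(1-u)^2\,dx$, but $u-1-\ln u$ blows up as $u\to 0^+$ while $(1-u)^2$ stays bounded, so this holds only once $u$ is known to be uniformly bounded away from $0$ --- and that lower bound is itself a product of the ODE comparison you have omitted (the crude bound $u\ge u_0e^{-Ct}$ from the $u$-equation decays in time). Without it, integrating \eqref{decay1r} only yields $\int_0^\infty F_r\,dt<\infty$ and hence qualitative $L^2$ convergence, not the claimed exponential rate. Finally, your replacement of the full $3\times 3$ positive-definiteness analysis of $\mathbb P_r$ by dropping the $vw$ term (via $\beta d_2\ge\gamma c$ and $vw\ge0$) plus pairwise Young inequalities is plausible but unverified as to whether it reproduces the sharp constants $d_1^r$, $d_2^r$; the paper obtains them by computing the discriminant of $\det\mathbb P_r$ as a quadratic in $\eta_r$ and showing it factors as $\Phi_r(\beta_r)\Psi_r(\beta_r)$, which is the step that pins down \eqref{d1r}--\eqref{d2r} exactly.
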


Based on the analysis of local stability, when $a_1>1$,  the healthy state is stable and can prevent the invasion of small amounts of tumor cells. According to Theorem \ref{thm3},
to guarantee the global stability of the healthy state, additionally, we need require that $a_2<1$ and $d_2$ is relatively large compared with $d_1$. This indicates that if both the competition ability and acid resistance of tumors cells are weak,  they will not have the chance to survive in the normal tissue. However, this is not biologically realistic, since usually it is the tumor who exhibits strong acid resistance.  Thus Theorem \ref{thm3} reflects that the acid-mediated invasion is quite effective from the reverse side.

\medskip

At the end, some remarks  from the view of mathematics and strategies of our proofs are appropriate.
\begin{enumerate}
	\item   The system (\ref{model}) can be viewed as a generalization of a classical mutual competition system \cite{Murray}.
	On the basis of Theorems \ref{thm1}, \ref{thm2} and \ref{thm3},
	we observe that the multiplication of competition coefficients  $a_1a_2$ is a critical quantity in determining the global dynamics. To be more specific,  thanks to the case (i) in Theorem \ref{thm1} and the cases (i) and (ii) in Theorem \ref{thm2}, {\it the property that local stability implies global stability} is verified under the conditions that
	$$
	a_1<1, \ a_1a_2<1,\ d_1\leq d^h_1=\left(\dfrac{1+\sqrt{1-a_1a_2}}{1-\sqrt{1-a_1}}\right)^2-a_2.
	$$
	However, when
	$a_1<1,\ a_1a_2 > 1,$
	for any $d_1>0$,
	there always exist the ranges where the homogeneous tumor state is locally stable but the global stability is unknown. Moreover, in Theorem \ref{thm3},  the conditions on $a_1$, $a_2$ automatically exclude the strong competition, i.e., $a_1>1$, $a_2>1$.
	Indeed,   in the studies of the classical Lotka-Volterra system for two competing species,  the multiplication of competition coefficients, still denoted by $a_1a_2$,  also plays an important role in determining  the global dynamics.  It is known that  when $a_1a_2  < 1$, {\it the property that local stability implies global stability} is completely verified, while when $a_1a_2>1$,  the global dynamics becomes very complicated and is far from being understood. See \cite{b_l_cvpde, Iida} and the references therein for more details. Therefore, back to the system (\ref{model}),
	it is naturally expected that more complicated phenomena might happen in the unknown ranges.

	\item The model (\ref{model})  is a combination of ODE and PDEs. A	 novel feature of this model is the density-limited tumor diffusion term in the second equation, which might give rise to the degeneracy of the parabolic equation. In \cite{tao}, where the global stability of the heterogenous state $(u^*,v^*,w^*)$ is studied,   to exclude this possibility of degeneracy, the rectangle method idea is employed to the system  where spatially homogeneous upper and lower solutions are constructed. The conditions on the parameters in Theorem A are imposed to guarantee that the upper solution approaches the lower solution as time goes to infinity.
	
	\item The proofs of our theorems make crucial use of Lyapunov functionals.  Inspired by \cite{bai, Hsu}, we design   proper  Lyapunov functionals for each nontrivial steady state to derive  $L^2$ convergence of $w$, where the conditions on the parameters   are required to warrants the existence of the desired Lyapunov functionals.  Moreover, based on the equation satisfied by $w$,
	the  $L^2$ convergence of $w$ is improved to $L^\infty$ convergence.  Indeed, the $L^2$ convergence of $u,\, v$ are derived at the same time. However, since there is no diffusion term for $u$ and the density-limited tumor diffusion term for $v$ might cause the degeneracy of the parabolic equation, the norm of the convergence cannot be improved for $u,\, v$.
	
	\item   For the heterogenous state $(u^*,v^*,w^*)$, to prove the $L^\infty$ convergence of $u,\, v$,  similar to \cite{tao}, we also employ  the rectangle method idea. However, different from \cite{tao}, where   the auxiliary system of ODEs is constructed for the whole system (\ref{model}) to obtain the upper and lower solutions, we only construct the auxiliary system of ODEs for the first two equations in the system (\ref{model}), where $w$ is replaced by a suitable perturbation of $w^*$. Obviously, the $L^\infty$ convergence of $w$ to $w^*$ guarantees that this simplification is practicable. Thanks to this simplification, no more requirements on the parameters are needed.
\item  For the homogeneous tumor state  and the healthy state, the auxiliary systems of ODEs  are constructed similarly, but the arguments in deriving the convergence relations between lower and upper solutions are different, since in each case, the corresponding elements of the lower solutions are reduced to be zero and the arguments for the heterogenous state  is not applicable anymore.
	
\end{enumerate}

This paper is organized as follows.
Some preliminary results are prepared in Section 2. Sections 3, 4  and 5 are devoted to Theorems \ref{thm1}, \ref{thm2} and \ref{thm3} respectively.


    \section{Preliminary results}


The local existence of solutions to the system (\ref{model}), as well as an extensibility criterion was proved in \cite{tao} as follows.

    \begin{lemma}\label{local_existence}
    	Let $a_1>0, a_2>0, d_1 \geq 0, d_2 \geq 0, r>0, D>0$ and $c>0$, and let $(\ref{ini})$ hold. Then there exist $T_{\max } \in(0, \infty]$ and a unique triple
$$
(u, v, w) \in C^{1,1}\left(\bar{\Omega} \times\left[0, T_{\max }\right)\right) \times\left(C^0\left(\bar{\Omega} \times\left[0, T_{\max }\right)\right) \cap C^{2,1}\left(\bar{\Omega} \times\left(0, T_{\max }\right)\right)\right)^2
$$
solving $(\ref{model})$ classically in $\Omega \times\left(0, T_{\max }\right)$. These functions have the properties
\begin{align}
&0<u<1 & \text { in } \Omega \times\left(0, T_{\max }\right), \label{u_upperbound}\\
&0<v \leq \max \left\{1,\left\|v_0\right\|_{L^{\infty}(\Omega)}\right\} & \text { in } \Omega \times\left(0, T_{\max }\right),\label{v_upperbound} \\
&0<w \leq \max \left\{1,\left\|v_0\right\|_{L^{\infty}(\Omega)},\left\|w_0\right\|_{L^{\infty}(\Omega)}\right\} & \text { in } \Omega \times\left(0, T_{\max }\right).\label{w_upperbound}
\end{align}

Moreover, we have the following dichotomy:
$$
\text {either} \quad T_{\max }=\infty, \quad \text {or} \quad \limsup _{t \nearrow T_{\max }}\|u(\cdot, t)\|_{L^{\infty}(\Omega)}=1.
$$
    \end{lemma}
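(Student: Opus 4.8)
The plan is to prove local existence by a contraction mapping argument that exploits the triangular, partially diffusive structure of \eqref{model}, then to read off the pointwise bounds \eqref{u_upperbound}--\eqref{w_upperbound} from comparison principles, and finally to obtain the extensibility criterion by a continuation argument in which the ellipticity constant of the second equation is controlled by $1-\|u\|_{L^\infty}$.

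For the local existence, I would fix a small $T>0$ and set up a map $\Phi$ on a closed ball $\mathcal B$ in $C\big([0,T];C^1(\bar\Omega)\big)^2$ consisting of pairs $(\hat v,\hat w)$ with $\hat v,\hat w\ge 0$ and initial traces $v_0,w_0$. Given such a pair, I first solve the first equation of \eqref{model} as a pointwise-in-$x$ scalar ODE
\[
u_t=u\big(1-u-a_2\hat v-d_1\hat w\big),\qquad u(\cdot,0)=u_0,
\]
which has an explicit logistic-type representation and keeps $u\in(0,1)$, since $0<u_0<1$ and $\hat v,\hat w\ge 0$; in fact, comparison with the plain logistic flow $\bar u$ started from $\|u_0\|_{L^\infty}$ gives $\sup_{[0,T]}u\le\bar u(T)<1$, so that $1-u\ge 1-\bar u(T)=:\delta>0$, while the spatial regularity of $u$ is inherited from $u_0\in W^{2,\infty}$ and from $\hat v,\hat w\in C^1$ through the flow. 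I then solve the linear, uniformly parabolic problem
\[
v_t=D\nabla\!\cdot\!\big((1-u)\nabla v\big)+rv\big(1-a_1u-\hat v\big)-d_2 v\hat w,\quad \partial_\nu v|_{\partial\Omega}=0,\ v(\cdot,0)=v_0,
\]
whose principal coefficient $D(1-u)\ge D\delta>0$ is bounded below, together with the linear heat problem $w_t=\Delta w+c(\hat v-w)$ under Neumann data. Linear parabolic theory furnishes unique solutions $v,w$ with the stated regularity and continuous dependence, and I set $\Phi(\hat v,\hat w)=(v,w)$. Choosing $T$ small, standard estimates show $\Phi$ maps $\mathcal B$ into itself and is a contraction; its fixed point is the local classical solution, whose uniqueness follows from the same contraction estimates.

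With a solution in hand, the bounds \eqref{u_upperbound}--\eqref{w_upperbound} follow from maximum/comparison principles applied equation by equation: positivity of $u$ from the multiplicative form $u=u_0\exp(\int_0^t(\cdots))>0$; then $u<1$ and $v\le\max\{1,\|v_0\|_{L^\infty}\}$ by comparison with spatially homogeneous logistic super/subsolutions (using $v,w\ge 0$); and $0<w\le\max\{1,\|v_0\|_{L^\infty},\|w_0\|_{L^\infty}\}$ by comparing the third equation with the constant supersolution, since $v$ is already bounded. Iterating the local construction from later times produces a maximal time $T_{\max}$, and parabolic Schauder estimates upgrade the solution to the asserted $C^{1,1}$ and $C^{2,1}$ regularity on $(0,T_{\max})$.

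The main obstacle is precisely the possible degeneracy of the second equation, which the dichotomy encodes. The length of the interval produced by the contraction depends on the ellipticity constant $\delta=1-\sup u$, so to continue the solution past a finite $T_{\max}$ it suffices that $\sup_{t<T_{\max}}\|u(\cdot,t)\|_{L^\infty}$ stay strictly below $1$. If $\limsup_{t\nearrow T_{\max}}\|u\|_{L^\infty}<1$, then $1-u$ is uniformly bounded below on $[0,T_{\max})$, the a priori bounds keep $u,v,w$ bounded, uniform Schauder estimates yield limits $(u,v,w)(\cdot,T_{\max})$ in the admissible class, and the solution can be restarted, contradicting maximality; hence either $T_{\max}=\infty$ or $\limsup_{t\nearrow T_{\max}}\|u\|_{L^\infty}=1$. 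The delicate points to watch are that $u$ carries no diffusive smoothing, so the spatial $C^1$ (hence parabolic Hölder) regularity of the coefficient $1-u$ must be tracked through the ODE solely from the data and from $\hat v,\hat w$, and that all parabolic estimates for $v$ degenerate as $\delta\to 0$, so the continuation genuinely requires $\|u\|_{L^\infty}$ bounded away from $1$.
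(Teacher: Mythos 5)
Your proposal is sound, but note that the paper itself gives no proof of this lemma: it is quoted verbatim from the reference \cite{tao}, where the argument is essentially the one you outline (a decoupled fixed-point construction in which $u$ is obtained as a pointwise ODE flow, $v,w$ from linear parabolic theory with ellipticity constant $D(1-\sup u)$, the bounds from comparison with spatially homogeneous logistic/constant super- and subsolutions, and the dichotomy from a continuation argument that fails only when $\|u\|_{L^\infty}\to 1$). The one step worth spelling out if you were to write this in full is the contraction estimate for the $v$-component, since the difference of two iterates carries the divergence-form term $D\nabla\cdot\big((u_2-u_1)\nabla v_2\big)$ and therefore requires control of $\nabla v$ in a H\"older or $W^{2,p}$ norm rather than merely in $C\big([0,T];C^1(\bar\Omega)\big)$.
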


 Thanks to Lemma \ref{local_existence}, the  the global existence of the solution to the system  (\ref{model})  follows easily.
    \begin{lemma}\label{global_existence}
    	Suppose that assumptions of Lemma $\ref{local_existence}$ hold, then $T_{\max}=+\infty$, namely, (\ref{model}) has a unique global classical solution.
    \end{lemma}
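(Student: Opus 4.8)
The plan is to argue by contradiction using the extensibility criterion supplied by Lemma~\ref{local_existence}: if $T_{\max}<\infty$, then necessarily $\limsup_{t\nearrow T_{\max}}\|u(\cdot,t)\|_{L^\infty(\Omega)}=1$. It therefore suffices to show that $u$ stays uniformly bounded away from $1$ on any finite interval $[0,T_{\max})$, which rules out the second alternative and forces $T_{\max}=\infty$.

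The key observation is that the first equation of (\ref{model}) contains no diffusion, so for each fixed $x\in\Omega$ the map $t\mapsto u(x,t)$ solves the scalar ODE $u_t=u(1-u-a_2v-d_1w)$. Since $v,w>0$ throughout $\Omega\times(0,T_{\max})$ by (\ref{v_upperbound})--(\ref{w_upperbound}) and $a_2,d_1\geq 0$, while $u>0$ by (\ref{u_upperbound}), we obtain the pointwise differential inequality
$$u_t\le u(1-u)\qquad\text{in }\Omega\times(0,T_{\max}).$$
I would then compare $u(x,\cdot)$ with the solution $\phi$ of the logistic initial value problem $\phi'=\phi(1-\phi)$, $\phi(0)=U_0:=\max_{\bar\Omega}u_0$. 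Here $U_0<1$ because $u_0\in W^{2,\infty}(\Omega)\hookrightarrow C(\bar\Omega)$ and $u_0<1$ on the compact set $\bar\Omega$. Since $f(s)=s(1-s)$ is locally Lipschitz, a standard ODE comparison yields $u(x,t)\le\phi(t)$ for all $x\in\Omega$ and $t\in[0,T_{\max})$.

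The decisive point is that the logistic flow never reaches the carrying capacity in finite time: $\phi$ is strictly increasing with $\phi(t)<1$ for every $t\in[0,\infty)$ and $\phi(t)\to1$ only as $t\to\infty$. Consequently, if $T_{\max}$ were finite, monotonicity of $\phi$ would give
$$\|u(\cdot,t)\|_{L^\infty(\Omega)}\le\phi(t)\le\phi(T_{\max})<1\qquad\text{for all }t\in[0,T_{\max}),$$
whence $\limsup_{t\nearrow T_{\max}}\|u(\cdot,t)\|_{L^\infty(\Omega)}\le\phi(T_{\max})<1$, contradicting the dichotomy. Therefore $T_{\max}=\infty$, and (\ref{model}) admits a unique global classical solution.

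There is no serious obstacle here; the argument is entirely elementary once one exploits the ODE nature of the $u$-component and discards the nonpositive contribution $-u(a_2v+d_1w)$. The only point requiring a little care is that the bound must be uniform up to $T_{\max}$, and this is exactly what the strict inequality $\phi(T_{\max})<1$ provides, valid precisely because $T_{\max}$ is assumed finite. I also note that this logistic upper bound is available irrespective of the behaviour of $v$ and $w$, so no further estimates on the other two components are needed to close the global existence argument.
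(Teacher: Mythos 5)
Your argument is correct and is essentially identical to the paper's own proof: both compare $u$ pointwise with the logistic ODE $\phi'=\phi(1-\phi)$ started from $\max_{\bar\Omega}u_0<1$, observe that $\phi$ never reaches $1$ in finite time, and conclude via the dichotomy in Lemma~\ref{local_existence}. Your write-up merely spells out the details (dropping the nonpositive terms $-u(a_2v+d_1w)$, uniformity of the bound up to $T_{\max}$) that the paper leaves implicit.
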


    \begin{proof}
    	To verify the solution is global, we denote $\bar u$ the solution of the following ODE
    $$
    \begin{cases}
    		\dfrac{\mathrm d}{\mathrm dt}\bar u=\bar u(1-\bar u),\quad t>0,\\
    		\bar u(0)=\max\limits_{x\in\bar\Omega}u_0.
    	\end{cases}
    $$
    By comparison principle, $\bar u(t)\geq u(t)$ for all $t\geq 0$. Since $\bar u(0)<1$, it is straightforward to verify that $\bar u$ will not reach 1 in finite time, and the dichotomy in Lemma $\ref{local_existence}$ immediately indicates that $T_{\max}=+\infty$.
    \end{proof}

    The following property is based on elementary analysis  and useful in the proofs of Lemma $\ref{l2con}$, Lemma $\ref{l2contumor}$ and Lemma $\ref{l2con_r}$.

    \begin{lemma}\label{bdd_convergence}
    	Suppose that $f(t)$ is a uniformly continuous nonnegative function defined on $(0,+\infty)$ such that $\displaystyle \int_0^\infty f(t) \mathrm dt<+\infty$, then $f(t)\rightarrow 0$ as  $t\rightarrow \infty$.
    \end{lemma}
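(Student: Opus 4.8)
The plan is to argue by contradiction, exploiting the classical tension between integrability and uniform continuity (this is essentially a version of Barbalat's lemma). Suppose, contrary to the claim, that $f(t)$ does not tend to $0$ as $t \to \infty$. Negating the definition of the limit then furnishes some $\varepsilon > 0$ together with a sequence $t_n \to +\infty$ along which $f(t_n) \geq \varepsilon$ for every $n$.

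Next I would bring in the uniform continuity hypothesis: there is a $\delta > 0$, independent of the point, such that $|f(t) - f(s)| < \varepsilon/2$ whenever $|t - s| < \delta$. Combining this with $f(t_n) \geq \varepsilon$ shows that $f(t) > \varepsilon/2$ on each interval $I_n := (t_n - \delta/2,\, t_n + \delta/2)$ (intersected with $(0,\infty)$ if needed), and hence $\int_{I_n} f(t)\,dt \geq \delta\varepsilon/2$. Because $f$ is nonnegative, integrating over a union of such intervals merely accumulates these positive contributions.

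The key remaining step is to pass to a subsequence (still denoted $t_n$) along which the intervals $I_n$ are pairwise disjoint; this is possible precisely because $t_n \to +\infty$, so one can inductively discard indices to ensure $t_{n+1} > t_n + \delta$. Summing over these disjoint intervals then gives $\int_0^\infty f(t)\,dt \geq \sum_n \int_{I_n} f(t)\,dt \geq \sum_n \delta\varepsilon/2 = +\infty$, which contradicts the assumed finiteness of $\int_0^\infty f(t)\,dt$. Therefore the supposition fails and $f(t) \to 0$ as $t \to \infty$.

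The argument is largely routine, and the only point genuinely requiring care is the extraction of the disjoint subsequence: without it the lower bounds $\delta\varepsilon/2$ could be double-counted over overlapping intervals, whereas disjointness guarantees they accumulate into a genuinely divergent series. It is also worth emphasizing that \emph{uniform} continuity, rather than mere continuity, is exactly what makes $\delta$ independent of $n$ and thereby keeps the per-interval contribution bounded below by a fixed positive constant; this is the hypothesis that drives the contradiction.
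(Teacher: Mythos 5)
Your proof is correct. The paper states this lemma without proof, treating it as an elementary fact from analysis (it is indeed Barbalat's lemma), so there is no argument in the paper to compare against; your contradiction argument is the standard and complete one. All the essential points are in place: uniform continuity gives a $\delta$ independent of $n$, so $f > \varepsilon/2$ on intervals of fixed length $\delta$ around the $t_n$, and passing to a subsequence with $t_{n+1} > t_n + \delta$ makes these intervals pairwise disjoint, forcing $\int_0^\infty f = +\infty$. Your parenthetical remark about intersecting with $(0,\infty)$ is harmless but unnecessary once you note that $t_n \to +\infty$ lets you discard the finitely many indices with $t_n \leq \delta$.
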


    At the end, we show  how to improve the $L^2$ convergence of $w$ to $L^\infty$ convergence. Since the arguments for  all the three nontrivial steady states are the same, we leave it in this  section for simplicity.
    \begin{lemma}\label{wdecay_h}
    	Suppose that $(u,v,w)$ is a global solution of $(\ref{model})$-$(\ref{ini})$ and satisfies
    	\begin{equation}\label{w_L2_temp}
    		||w-\mathfrak w||_{L^2(\Omega)}\rightarrow 0 \quad as\quad t\rightarrow\infty,
    	\end{equation}
        where $\mathfrak w\in\{w^*, \tilde w,0\}$, then
    	$$
    	     ||w-\mathfrak w||_{L^\infty(\Omega)}\rightarrow 0 \quad as\quad t\rightarrow\infty.
        $$
    \end{lemma}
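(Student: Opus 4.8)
The plan is to exploit the fact that, unlike the equations for $u$ and $v$, the third equation in $(\ref{model})$ is a genuinely uniformly parabolic equation $w_t=\Delta w+c(v-w)$ with Neumann boundary conditions, so that parabolic smoothing lets me upgrade the already-known $L^2$ decay of $w-\mathfrak w$ to an $L^\infty$ decay, provided I can bound a higher norm of $w$ uniformly in time. Since $\mathfrak w$ is a constant in each of the three cases, I set $z:=w-\mathfrak w$, note that $\nabla z=\nabla w$, and observe that by Lemma \ref{local_existence} (in particular $(\ref{w_upperbound})$) the source term $c(v-w)$ is bounded in $L^\infty(\Omega\times(0,\infty))$, say $\|c(v-w)\|_{L^\infty}\le M_0$, with $M_0$ independent of $t$.

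First I would establish a uniform-in-time gradient bound
\[ \sup_{t\ge 1}\ \|\nabla w(\cdot,t)\|_{L^\infty(\Omega)}\ \le\ C, \]
with $C$ depending only on the data and not on $t$. To do this I would represent $w$ by the variation-of-constants formula based at time $t-1$, using the Neumann heat semigroup $(e^{s\Delta})_{s\ge0}$,
\[ w(\cdot,t)=e^{\Delta}w(\cdot,t-1)+\int_{t-1}^{t}e^{(t-s)\Delta}\,c\big(v(\cdot,s)-w(\cdot,s)\big)\,ds, \]
and invoke the standard smoothing estimate
\[ \|\nabla e^{s\Delta}f\|_{L^\infty(\Omega)}\le C\big(1+s^{-\frac12-\frac{n}{2q}}\big)\|f\|_{L^q(\Omega)} \]
for some fixed $q>n$. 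The first term contributes $\|\nabla e^{\Delta}w(\cdot,t-1)\|_{L^\infty}\le C\|w(\cdot,t-1)\|_{L^\infty}\le C$ by $(\ref{w_upperbound})$, while the integral term is dominated by $\int_{t-1}^{t}C\big(1+(t-s)^{-\frac12-\frac{n}{2q}}\big)M_0\,ds$, which is finite and independent of $t$ precisely because $q>n$ renders the singularity integrable over the unit window. Restarting the semigroup on the sliding interval $(t-1,t)$ is exactly what keeps the constants $t$-independent; an equivalent route is to apply interior parabolic $L^p$ (or Schauder) estimates on the cylinders $\Omega\times(t-1,t)$, whose constants are invariant under time translation, to obtain a uniform bound on $\|w(\cdot,t)\|_{C^{\theta}(\bar\Omega)}$ for some $\theta\in(0,1)$.

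With the uniform Lipschitz (or Hölder) bound in hand, the conclusion follows by elementary interpolation against the hypothesis $(\ref{w_L2_temp})$. Writing $A(t):=\|z(\cdot,t)\|_{L^\infty(\Omega)}$ and choosing a point $x_t\in\bar\Omega$ at which $|z(\cdot,t)|$ is nearly maximal, the gradient bound forces $|z(\cdot,t)|\ge A(t)/2$ throughout the portion of the ball of radius $\sim A(t)/(2C)$ centred at $x_t$; since $\Omega$ is smooth and bounded it satisfies an interior cone condition, so this region has measure at least $c_\Omega\,(A(t)/C)^n$. Integrating yields $\|z(\cdot,t)\|_{L^2(\Omega)}^2\ge c_1\,A(t)^{n+2}/C^n$, whence $A(t)\le C'\,\|z(\cdot,t)\|_{L^2(\Omega)}^{2/(n+2)}\to 0$ by $(\ref{w_L2_temp})$. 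One may instead simply quote the Gagliardo--Nirenberg inequality $\|z\|_{L^\infty}\le C\|z\|_{W^{1,q}}^{\alpha}\|z\|_{L^2}^{1-\alpha}$ and combine it with the $W^{1,q}$ bound from the previous step.

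I expect the only genuine difficulty to be the first step, namely making the regularity estimate uniform in $t$ rather than merely local in time. The resolution is structural: the $w$-equation is autonomous and uniformly parabolic with a globally bounded right-hand side, so working on sliding unit time-windows (or exploiting the exponential dissipation carried by the $-cw$ term) produces $t$-independent constants. It is worth emphasising that this argument uses only the boundedness of $v$ and $w$ from Lemma \ref{local_existence}, and never any convergence of $v$; the $L^2$ decay of $w-\mathfrak w$ enters solely in the final interpolation, which is why the identical proof applies verbatim to all three choices $\mathfrak w\in\{w^*,\tilde w,0\}$.
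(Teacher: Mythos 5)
Your proposal is correct and follows essentially the same route as the paper: a uniform-in-time $W^{1,\infty}$ bound on $w$ obtained from semigroup smoothing applied to the uniformly parabolic third equation (the paper uses the smoothing of $(e^{t(\Delta-c)})_{t>0}$, you use the Neumann heat semigroup on sliding unit windows, which is equivalent), followed by interpolation of $L^\infty$ between $W^{1,\infty}$ and the decaying $L^2$ norm. The only cosmetic difference is in the interpolation step, where the paper passes through a fractional Sobolev space and its embedding into $C^{\beta_n}(\bar\Omega)$, while your elementary measure-theoretic argument (or direct Gagliardo--Nirenberg) reaches the same conclusion more directly.
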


    \begin{proof}
    	Recall from Lemma $\ref{local_existence}$ that $v$ is uniformly bounded, by the smoothing property of $(e^{t(\Delta-c)})_{t>0}$ \cite{winkler2022}, for all $t>0$, there exists a constant $c_n>0$ such that
    	 \begin{eqnarray}\label{wbound}
    	 	||w(\cdot,t)||_{W^{1,\infty}(\Omega)}
    	 & \leq & c_n ||w(\cdot,0)||_{W                                                                             ^{1,\infty}(\Omega)}\cr
    	&& +c_n\Big\{\sup\limits_{\tau>0}||v(\cdot,\tau)||^{(2n-1)/(2n)}_{L^\infty(\Omega)}\Big\}
    	 \Big\{\sup\limits_{\tau>0}||v(\cdot,\tau)||^{1/(2n)}_{L^1(\Omega)}\Big\}  < \infty.
    	 \end{eqnarray}
    	Using Gagliardo-Nierenberg inequality \cite{gn}, we have
    	\begin{equation}\label{gn}
    		||w-\mathfrak w||_{W^{\frac{n}{n+1},2(n+1)}(\Omega)}\leq
    	||w-\mathfrak w||^{{n}/{(n+1)}}_{W^{1,\infty}(\Omega)}||w-\mathfrak w||^{{1}/{(n+1)}}_{L^2(\Omega)}.
    	\end{equation}
    	By fractional Sobolev imbedding \cite{fractional}, there exist constants $0<\beta_n<1$ and $\mathcal C_n>0 $ such that
    	\begin{equation}\label{sobolev}
    		||w-\mathfrak w||_{ C^{\beta_n}(\bar \Omega)}\leq
    	\mathcal C_n||w-\mathfrak w||_{W^{\frac{n}{n+1},2(n+1)}(\Omega)}.
    	\end{equation}
    	The desired conclusion follows immediately from
    	 $(\ref{w_L2_temp})$ and $(\ref{wbound})$-$(\ref{sobolev})$ .
    \end{proof}

\section{The heterogeneous state }
    This section is devoted to the proof of Theorem $\ref{thm1}$, which is about the global convergence of the  heterogeneous state
     $$
    (u^*,v^*, w^*)=(1-(a_2+d_1)v_h,v_h,v_h),\ \textrm{where}\ v_h:=\dfrac{1-a_1}{1-a_1a_2+\dfrac{d_2}{r} -a_1d_1}, \ a_1\neq 0,
    $$
    where $u^*>0,\, v^*>0,\, w^*>0.$
    Recall that     the heterogeneous state $(u^*,v^*, w^*)$ exists and is linearly stable if and only if
    $$
    a_1<1,\	\dfrac{d_2}{r}>a_2+d_1-1.
    $$
    According to the strategies explained at the end of the introduction, we present the proof in three steps:
    \begin{itemize}
    	\item in Section \ref{l2heter},  we   demonstrate the $L^\infty$ convergence of $w$ to $w^*$ with the help of  a Lyapunov functional, the form of which is inspired by \cite{bai, Hsu}.
    	\item in Section \ref{auxh},  the auxiliary system of ODEs for the first two equations in the system (\ref{model}) is constructed and some properties are prepared.
    	\item  in Section \ref{ifih}, the $L^\infty$ convergence of $u,\, v$ to $u^*,\, v^*$ respectively is established.
    \end{itemize}

    \subsection{$L^\infty$ convergence of $w$ in the heterogeneous state}\label{l2heter}

  To prove the $L^\infty$ convergence of $w$,  the key step is to select the proper Lyapunov functional  in the following lemma.

    \begin{lemma}\label{uvwh}
    	Suppose that $(u,v,w)$ is the global solution of $(\ref{model})$-$(\ref{ini})$ and the assumptions of Theorem $\ref{thm1}$ hold. Define
    	$$
    	A_h(t)=\int_\Omega u(x,t)-u^*-u^*\ln\dfrac{u(x,t)}{u^*}\,\mathrm dx,
    	$$
    	$$
    	B_h(t)=\int_\Omega v(x,t)-v^*-v^*\ln\dfrac{v(x,t)}{v^*}\,\mathrm dx,
    	$$
    	$$
    	C_h(t)=\dfrac{1}{2}\int_\Omega(w(x,t)-w^*)^2\,\mathrm dx.
    	$$
    	Then there exist $\beta_h>0$, $\eta_h>0$ and $\varepsilon_h>0 $ such that the functions $E_h(t)$ and $F_h(t)$ defined by
    	\begin{equation}\label{eh}
    	E_h(t)=A_h(t)+\dfrac{\beta_h}{r}B_h(t)+\dfrac{\eta_h}{c}C_h(t), \quad t>0
    	\end{equation}
    	and
    	\begin{align}\label{fh}
    	F_h(t)=&
    	\int_\Omega(u(x,t)-u^*)^2\mathrm dx
    	+\int_\Omega(v(x,t)-v^*)^2\mathrm dx
    	+\int_\Omega(w(x,t)-w^*)^2\mathrm dx\nonumber\\
    	&+\int_\Omega \left|\nabla w(x,t)\right|^2\mathrm dx, \quad t>0,
    	\end{align}
    	satisfy
    	$$
    	E_h(t)\geq 0, \quad t > 0
    	$$
    	as well as
    	\begin{equation}\label{decay1}
    	\dfrac{\mathrm d}{{\mathrm dt}}E_h(t)\leq-\varepsilon_h F_h(t).
    	\end{equation}
    \end{lemma}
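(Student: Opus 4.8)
The plan is to regard $E_h$ as a weighted sum of two relative-entropy functionals (for the ODE variable $u$ and the degenerate-diffusion variable $v$) plus an $L^2$ functional for $w$, and to show that its dissipation dominates a coercive quadratic form in the deviations $u-u^*$, $v-v^*$, $w-w^*$. The nonnegativity $E_h\ge0$ is immediate: the function $\phi(s):=s-s^*-s^*\ln(s/s^*)$ has $\phi''(s)=s^*/s^2>0$ and minimum value $\phi(s^*)=0$, so $A_h,B_h\ge0$, while $C_h\ge0$ trivially, and $\beta_h,\eta_h>0$ yield $E_h\ge0$.

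First I would differentiate along the flow. For $A_h$, $\frac{\mathrm d}{\mathrm dt}A_h=\int_\Omega\frac{u-u^*}{u}u_t\,\mathrm dx$; substituting $u_t=u(1-u-a_2v-d_1w)$ and the steady-state identity $1-u^*-a_2v^*-d_1w^*=0$ turns the logistic bracket into the linear expression $-(u-u^*)-a_2(v-v^*)-d_1(w-w^*)$, making $\frac{\mathrm d}{\mathrm dt}A_h$ purely quadratic in the deviations. The same substitution applies to $B_h$, where in addition the diffusion term, after integration by parts using $\partial_\nu v=0$, equals $-D\int_\Omega\frac{v^*}{v^2}(1-u)|\nabla v|^2\,\mathrm dx\le0$ because $0<u<1$, and may be discarded. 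For $C_h$, integration by parts produces the favorable term $-\int_\Omega|\nabla w|^2\,\mathrm dx$, while the reaction part, using $v^*=w^*$, contributes $c\int_\Omega(v-v^*)(w-w^*)\,\mathrm dx-c\int_\Omega(w-w^*)^2\,\mathrm dx$.

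Collecting the three contributions yields
\begin{equation*}
\frac{\mathrm d}{\mathrm dt}E_h(t)\le-\int_\Omega Q\big(u-u^*,v-v^*,w-w^*\big)\,\mathrm dx-\frac{\eta_h}{c}\int_\Omega|\nabla w|^2\,\mathrm dx,
\end{equation*}
where $Q$ is the quadratic form with symmetric matrix
\begin{equation*}
M=\begin{pmatrix}
1 & \frac{a_2+\beta_h a_1}{2} & \frac{d_1}{2}\\[2pt]
\frac{a_2+\beta_h a_1}{2} & \beta_h & \frac{1}{2}\big(\beta_h\frac{d_2}{r}-\eta_h\big)\\[2pt]
\frac{d_1}{2} & \frac{1}{2}\big(\beta_h\frac{d_2}{r}-\eta_h\big) & \eta_h
\end{pmatrix}.
\end{equation*}
It then suffices to choose $\beta_h,\eta_h>0$ making $M$ positive definite, for then $Q$ is bounded below pointwise by $\lambda_{\min}(M)(|u-u^*|^2+|v-v^*|^2+|w-w^*|^2)$ and $\varepsilon_h:=\min\{\lambda_{\min}(M),\eta_h/c\}>0$ gives $(\ref{decay1})$.

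The crux, which I expect to be the main obstacle, is to verify that assumptions (i)/(ii) allow such a choice, and this is where the thresholds $d_1^h$, $d_2^h$ arise. I would apply Sylvester's criterion through the principal minors. The leading $2\times2$ minor $\beta_h-\frac14(a_2+\beta_h a_1)^2$ is positive for some $\beta_h>0$ precisely when $a_1a_2<1$, the admissible window being $\beta_h\in(\beta_-,\beta_+)$ with $\beta_\pm=\big((1\pm\sqrt{1-a_1a_2})/a_1\big)^2$. Since $\det M$ is concave quadratic in $\eta_h$, I would eliminate $\eta_h$ at its maximizer; a short computation gives $\max_{\eta_h}\det M=(\beta_h-p^2)\big[\beta_h(1+\frac{d_2}{r})-(p+q)^2\big]$ with $p=\frac{a_2+\beta_h a_1}{2}$ and $q=\frac{d_1}{2}$, so positive definiteness reduces to finding $\beta_h\in(\beta_-,\beta_+)$ with
\begin{equation*}
4\beta_h\big(1+\tfrac{d_2}{r}\big)>\big((a_2+d_1)+\beta_h a_1\big)^2.
\end{equation*}
Rewriting this as $a_2+d_1<2\sqrt{(1+\frac{d_2}{r})\beta_h}-a_1\beta_h$ and maximizing the right side over the window, the maximizer is interior at $\beta_h=(1+\frac{d_2}{r})/a_1^2$ when $\frac{d_2}{r}\le(1+\sqrt{1-a_1a_2})^2-1$ and is the endpoint $\beta_+$ otherwise, the two regimes producing exactly the conditions $\frac{d_2}{r}>a_1(a_2+d_1)-1$ and $\frac{d_2}{r}>d_2^h$. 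The delicate remaining step is the matching: under (i) the weaker existence bound $\frac{d_2}{r}>a_2+d_1-1$ must be shown to already force solvability — using $a_1<1$ together with the concavity of $d_1\mapsto(a_2+d_1-1)-d_2^h$ and its vanishing at $d_1=d_1^h$ — while under (ii) the endpoint regime gives $\frac{d_2}{r}>d_2^h$; carrying out this bookkeeping closes the argument.
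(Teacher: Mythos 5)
Your proposal follows essentially the same route as the paper: the same relative-entropy/$L^2$ Lyapunov functional, the same cancellation via the steady-state identities, the same $3\times3$ matrix $\mathbb P_h$, and the same reduction to choosing $\beta_h,\eta_h>0$ making it positive definite (the paper, like you, notes that once the leading $2\times2$ minor and the discriminant in $\eta_h$ are positive, the maximizing $\eta_h$ is automatically positive). The only real divergence is organizational: the paper characterizes positive definiteness by two explicit quadratics $\Phi_h(\beta_h)>0$, $\Psi_h(\beta_h)>0$ and checks that their root intervals overlap via the single inequality $L_1^h<R_2^h$, handled by rationalization and monotonicity in $d_2$; you instead eliminate $\eta_h$ at its maximizer and maximize the concave function $2\sqrt{(1+d_2/r)\beta_h}-a_1\beta_h$ over the window, splitting into interior versus endpoint maximizer. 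These are equivalent, and your identification of the two regime conditions $\frac{d_2}{r}>a_1(a_2+d_1)-1$ and $\frac{d_2}{r}>d_2^h$ is correct.

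One caution on the step you flag as delicate: the matching for case (i) cannot be closed by ``concavity of $d_1\mapsto(a_2+d_1-1)-d_2^h$ and its vanishing at $d_1=d_1^h$'' alone. That function has a second root at $d_1=\bigl(\frac{1+\sqrt{1-a_1a_2}}{1+\sqrt{1-a_1}}\bigr)^2-a_2$ and is \emph{negative} below it (e.g.\ at $d_1=0$ one has $d_2^h=0>a_2-1$ when $a_2<1$), so $a_2+d_1-1\geq d_2^h$ fails for small $d_1$. The argument still closes, but you must combine the two regimes: in the interior regime the condition $\frac{d_2}{r}>a_1(a_2+d_1)-1$ follows from $\frac{d_2}{r}>a_2+d_1-1$ and $a_1<1$ with no use of $d_1\leq d_1^h$; in the endpoint regime, either $a_1(a_2+d_1)\leq(1+\sqrt{1-a_1a_2})^2$, in which case $d_2^h\leq(1+\sqrt{1-a_1a_2})^2-1<\frac{d_2}{r}$ is forced by the regime itself, or $a_1(a_2+d_1)>(1+\sqrt{1-a_1a_2})^2$, which places $d_1$ between the two roots above so that $a_2+d_1-1\geq d_2^h$ does hold. (The paper sidesteps this case split by checking $L_1^h<R_2^h$ at $\frac{d_2}{r}=a_2+d_1-1$ and invoking monotonicity in $d_2$, which is cleaner.) With that repair your plan is sound.
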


On the basis of  Lemma \ref{uvwh} is valid, we establish the $L^\infty$ convergence of $w$.

    \begin{lemma}\label{l2con}
    	Suppose that assumptions of Theorem $\ref{thm1}$ hold, and $(u,v,w)$ is the global solution of $(\ref{model})$-$(\ref{ini})$, then
    	$$
    	||w-w^*||_{L^\infty(\Omega)}\rightarrow 0\quad as\quad t\rightarrow\infty.
    	$$
    \end{lemma}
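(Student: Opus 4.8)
The plan is to convert the dissipation inequality (\ref{decay1}) of Lemma \ref{uvwh} into a finite-integral statement, extract $L^2$ convergence of $w$ to $w^*$ by a Barbalat-type argument, and then upgrade this to $L^\infty$ convergence through the already-established Lemma \ref{wdecay_h}.

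First I would integrate (\ref{decay1}) in time. Since the initial data satisfy (\ref{ini}) with $0<u_0<1$ and $v_0,w_0>0$ on the compact set $\bar\Omega$, the functions $u_0,v_0$ are bounded away from zero there, so the logarithmic terms are integrable and the quantities $A_h(0),B_h(0),C_h(0)$ in (\ref{eh}) are all finite; thus $E_h(0)<\infty$. Combining this with $E_h(t)\ge 0$ and integrating (\ref{decay1}) over $(0,t)$ gives
$$
\varepsilon_h\int_0^t F_h(s)\,\mathrm ds\le E_h(0)-E_h(t)\le E_h(0),
$$
so that $\int_0^\infty F_h(s)\,\mathrm ds<\infty$. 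Recalling the definition (\ref{fh}) of $F_h$, the nonnegative function $f(t):=\int_\Omega(w(x,t)-w^*)^2\,\mathrm dx$ then satisfies $\int_0^\infty f(s)\,\mathrm ds<\infty$.

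Next I would check that $f$ is uniformly continuous on $(0,\infty)$ in order to invoke Lemma \ref{bdd_convergence}. Differentiating and using the third equation of (\ref{model}) together with the Neumann boundary condition and an integration by parts yields
$$
f'(t)=2\int_\Omega(w-w^*)\,w_t\,\mathrm dx=-2\int_\Omega|\nabla w|^2\,\mathrm dx+2c\int_\Omega(w-w^*)(v-w)\,\mathrm dx.
$$
By Lemma \ref{local_existence} the functions $v$ and $w$ are uniformly bounded, and the smoothing estimate (\ref{wbound}) already derived in the proof of Lemma \ref{wdecay_h} supplies a uniform bound on $\|\nabla w(\cdot,t)\|_{L^\infty(\Omega)}$. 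Hence $|f'(t)|$ is bounded uniformly in $t$, so $f$ is Lipschitz and in particular uniformly continuous. Lemma \ref{bdd_convergence} now gives $f(t)\to 0$, i.e. $\|w-w^*\|_{L^2(\Omega)}\to 0$ as $t\to\infty$.

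Finally, applying Lemma \ref{wdecay_h} with $\mathfrak w=w^*$ promotes the $L^2$ convergence to $\|w-w^*\|_{L^\infty(\Omega)}\to 0$, which is the claim. I expect the main obstacle to be technical rather than conceptual: the only genuine point is the uniform continuity of $f$, which rests on the uniform gradient bound for $w$ coming from (\ref{wbound}). The substantive difficulty of the whole argument is concentrated in Lemma \ref{uvwh}, whose Lyapunov functional and the parameter restrictions guaranteeing $\beta_h,\eta_h,\varepsilon_h>0$ are what make the dissipation inequality (\ref{decay1}) available in the first place.
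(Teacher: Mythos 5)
Your proposal is correct and follows essentially the same route as the paper: integrate the dissipation inequality \eqref{decay1} to get $\int_0^\infty F_h(t)\,\mathrm dt<\infty$, deduce $L^2$ convergence of $w$ via the Barbalat-type Lemma \ref{bdd_convergence}, and upgrade to $L^\infty$ via Lemma \ref{wdecay_h}. The only (harmless) variation is in establishing uniform continuity of $t\mapsto\|w(\cdot,t)-w^*\|_{L^2(\Omega)}^2$: you bound $f'(t)$ directly using the uniform $W^{1,\infty}$ bound \eqref{wbound}, while the paper invokes uniform parabolic H\"older estimates for $w$; both are valid.
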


    \begin{proof}
    	Integrating $(\ref{decay1})$ from $0$ to $+\infty$, by the fact that $E^h(\cdot)$ is nonnegative, we have
    	$$
    \int_0^{+\infty}||u-u^*||^2_{L^2}+||v-v^*||^2_{L^2}+||w-w^*||^2_{L^2}+|| \nabla w||^2 _{L^2} \mathrm dt	\leq \dfrac{1}{\varepsilon_h} E^h(0)<+\infty.
    	$$
    	Using standard $L^p$ estimate and Sobolev embedding on $w$, we obtain that there exist two constants $\mathcal C_1>0$ and $0<\alpha_n<1$ such that
    	$$
    	||w||_{C^{\alpha_n,\alpha_n/2}(\bar\Omega\times(k,k+1])}
    	\leq \mathcal C_1,\quad \forall\ k\in \mathbb N.
    	$$
    	Therefore,
    	$
    	||w(\cdot,t)-w^*||^2_{L^2}
    	$
    	is uniformly continuous with respect to $t$. Thus, Lemma $\ref{bdd_convergence}$ ensures that
    	$$
    	||w-w^*||_{L^2(\Omega)}\rightarrow 0\quad as\quad t\rightarrow\infty,
    	$$
    	and the desired conclusion follows from Lemma $\ref{wdecay_h}$.
    \end{proof}

    It remains to prove  Lemma \ref{uvwh}. Notice that to verify (\ref{decay1}), it suffices to show that the matrix $\mathbb P_h$ defined in (\ref{P_h-matrix}) coming from the Lyapunov functional $E_h(t)$ is positive definite.   Thus to derive the optimal results, we first leave the  coefficients $\beta_h$ and $\eta_h$  in the energy functional
        $E_h(t)$ undetermined, and then manage to explore the equivalent conditions on the parameters in the system (\ref{model})  which guarantee the  existence of positive coefficients $\beta_h$ and $\eta_h$ such that $\mathbb P_h$  is positive definite.

    \begin{proof}[Proof of Lemma $\ref{uvwh}$]
    	First of all, we claim that
    	$A_h(t),\, B_h(t)$ are nonnegative. In fact, by setting function $\mathcal I(\mathfrak u):=\mathfrak u-u^*\ln \mathfrak u$ for $\mathfrak u>0$, and using Taylor's formula, for all $x\in\Omega$ and $t>0$, there exists $\xi=\xi(x,t)\in(0,1)$ such that
    	$$
    	\begin{aligned}
    		&\mathcal I(u(x,t))-\mathcal I(u^*)\\
    		= & \mathcal I'(u^*)\cdot(u(x,t)-u^*)+\frac{1}{2}\,\mathcal I''(\,\xi u(x,t)+(1-\xi)u^*)\cdot (u(x,t)-u^*)^2\\
    		= & \dfrac{u^*}{2(\xi u(x,t)+(1-\xi)u^*)^2}(u(x,t)-u^*)^2\geq 0.
    	\end{aligned}
    	$$
    	From the computation above, we obtain that
    	$$A_h(t)=\int_\Omega \left (\mathcal I(u(x,t))-\mathcal I(u^*)\right ) \mathrm dx \geq 0.
    	$$
    	Similarly, $B_h(t)$ is also nonnegative. Since $\beta_h$ and $\eta_h$ are positive, $E_h(t)\geq 0$ for all $t\geq 0$ .
    	
    	 Next, we compute
    	$$
    	\begin{aligned}
    			\dfrac{\mathrm d}{\mathrm dt}A_h(t)=&\int_\Omega\dfrac{u-u^*}{u}u\left(1-u-a_2 v-d_1w\right)\mathrm dx\\
    	         =&\int_\Omega(u-u^*)\big  [(u^*-u)+a_2(v^*-v)+d_1(w^*-w)\big  ]\mathrm dx\\
    	         =&-\int_\Omega(u-u^*)^2\mathrm dx
    	         -a_2\int_\Omega(u-u^*)(v-v^*)\mathrm dx
    	         -d_1\int_\Omega(u-u^*)(w-w^*)\mathrm dx,
    	\end{aligned}
    	$$
    	
    	$$
    	\begin{aligned}
    			\dfrac{1}{r}\dfrac{\mathrm d}{\mathrm dt}B_h(t)=&\dfrac{1}{r}\int_\Omega\dfrac{v-v^*}{v}
    			\big[ D \nabla \cdot((1-u) \nabla v)+r v\left(1-v-a_1 u\right)-d_2w\big ]\mathrm dx\\
    	         =&-\dfrac{Dv^*}{r}\int_\Omega(1-u)\left|\dfrac{\nabla v}{v} \right|^2\mathrm dx
    	         +\int_\Omega(v-v^*)
    	         \big[ a_1(u^*-u)+(v^*-v)+\dfrac{d_2}{r}(w^*-w) \big ]\mathrm dx\\
    	         =&-a_1\int_\Omega(u-u^*)(v-v^*)\mathrm dx
    	         -\int_\Omega(v-v^*)^2\mathrm dx
    	         -\dfrac{d_2}{r}\int_\Omega(v-v^*)(w-w^*)\mathrm dx\\
    	         &-\dfrac{Dv^*}{r}\int_\Omega(1-u)\left|\dfrac{\nabla v}{v} \right|^2\mathrm dx,
    		\end{aligned}
    	$$
    	
    	$$
    	\begin{aligned}
    			\dfrac{1}{c}\dfrac{\mathrm d}{\mathrm dt}C_h(t)=&\dfrac{1}{c}\int_\Omega(w-w^*)\big[\Delta w+c(v-w) \big ]\mathrm dx\\
    			=&-\dfrac{1}{c}\int_\Omega\left| \nabla w \right|^2\mathrm dx
    			+\int_\Omega(w-w^*)\big[(v-v^*)+(w^*-w)\big ]\mathrm dx\\
    			=&-\dfrac{1}{c}\int_\Omega\left| \nabla w \right|^2\mathrm dx
    			+\int_\Omega(v-v^*)(w-w^*)\mathrm dx-\int_\Omega(w-w^*)^2\mathrm dx.
    		\end{aligned}
    	$$
    	
    	By differentiating $(\ref{eh})$ and substituting the three equations above into it, we obtain
    	\begin{eqnarray}\label{eh_differrentiate}
    			\dfrac{\mathrm d}{\mathrm dt}E_h(t) =&&\!\!\!\!\!\!\!\!
    			-\int_\Omega(u-u^*)^2\mathrm dx
    			-\beta_h\int_\Omega(v-v^*)^2\mathrm dx
    			-\eta_h\int_\Omega(w-w^*)^2\mathrm dx\cr
    			&&\!\!\!\!\!\!\!\!
    			-(a_2+a_1\beta_h)\int_\Omega(u-u^*)(v-v^*)\mathrm dx
    			-d_1\int_\Omega(u-u^*)(w-w^*)\mathrm dx\cr
    			&&\!\!\!\!\!\!\!\!
    			-(\dfrac{d_2}{r}\beta_h-\eta_h)\int_\Omega(v-v^*)(w-w^*)\mathrm dx
    			-\dfrac{D\beta_h v^*}{r}\int_\Omega(1-u)\left|\dfrac{\nabla v}{v} \right|^2\mathrm dx
    			-\dfrac{\eta_h}{c}\int_\Omega\left| \nabla w \right|^2\mathrm dx\cr
    			\leq &&\!\!\!\!\!\!\!\!
    			 -\int_\Omega\mathbf X^{\mathrm T}\mathbb P_h \mathbf X\,\mathrm dx
    			-\dfrac{\eta_h}{c}\int_\Omega\left| \nabla w \right|^2\mathrm dx,
    	\end{eqnarray}
    	where $\mathbb P_h$ and $\mathbf X$ are defined by
    	\begin{equation}\label{P_h-matrix}
    	\mathbb P_h=
    	\begin{pmatrix}\displaystyle
    	1 & \dfrac{a_2+a_1\beta_h}{2} & \dfrac{d_1}{2}\\
    	\dfrac{a_2+a_1\beta_h}{2} & \beta_h & \dfrac{\dfrac{d_2}{r}\beta_h-
    		\eta_h}{2}\\
    	\dfrac{d_1}{2} & \dfrac{\dfrac{d_2}{r}\beta_h-
    		\eta_h}{2} & \eta_h
    	\end{pmatrix},
    	\end{equation}

    	$$
    	\mathbf X=\left(u-u^*,v-v^*,w-w^*\right)^\mathbf T.
    	$$
    	
    	In order to verify $(\ref{decay1})$, we need to show that there exist positive constants $\beta_h$, $\eta_h$ such that $\mathbb P_h$ is positive definite.
       We claim that this property holds if and only if there exists a positive constant $\beta_h$ satisfying the two following inequalities simultaneously:
    	\begin{numcases}{}
    		\Phi_h(\beta_h):=
    		-a_1^2\beta_h^2+2\Big [\,2\Big (1+\dfrac{d_2}{r}\Big )-(a_1a_2+a_1d_1)\,
    		\Big ]\beta_h-(a_2+d_1)^2>0,\label{group1}\\
    		\Psi_h(\beta_h):=
    		-a_1^2\beta_h^2+2(2-a_1a_2)\beta_h-a_2^2>0.\label{group2}
    		\end{numcases}

    	Since a matrix is positive definite if and only if all its principal minors are positive, it remains to verify the positivity of every principal minor of $\mathbb P_h$. For simplicity, we denote $\alpha=\frac{1}{2}(a_2+a_1\beta_h)$.
    	First of all, we verify the first two principal minors:
    	$$
    	\mathbf {M^h_1}:=1,
    	$$
    	$$
    		\mathbf{M^h_2}:=
    	\begin{vmatrix}
    		1 & \alpha\\
    		\alpha & \beta_h
    	\end{vmatrix}
    	=\beta_h-\alpha^2
    	=\dfrac{1}{4}\left(
    	-a_1^2\beta_h^2+2(2-a_1a_2)\beta_h-a_2^2
    	\right)
    	=\dfrac{1}{4}\Psi_h(\beta_h).
    	$$
    	Thus, $(\ref{group2})$ is equivalent to	the positivity of $\mathbf{M^h_2}$.
    	Next, we consider $\det{\mathbb P_h}$:
    	
    	\begin{align}\label{quadratic_h1}
    		\det{\mathbb P_h}=&
    		\begin{vmatrix}\displaystyle
    			1 & \alpha & \dfrac{d_1}{2}\\
    			\alpha & \beta_h & \dfrac{\dfrac{d_2}{r}\beta_h-
    			\eta_h}{2}\nonumber\\
    			\dfrac{d_1}{2} & \dfrac{\dfrac{d_2}{r}\beta_h-\eta_h}{2} & \eta_h
    		\end{vmatrix}\nonumber\\
    		=&\begin{vmatrix}
    			\beta_h & \dfrac{\dfrac{d_2}{r}\beta_h-\eta_h}{2}\\
    			\dfrac{\dfrac{d_2}{r}\beta_h-\eta_h}{2} & \eta_h
    		\end{vmatrix}
    		-\alpha\begin{vmatrix}
    			\alpha & \dfrac{\dfrac{d_2}{r}\beta_h-\eta_h}{2}\\
    			\dfrac{d_1}{2} & \eta_h
    		\end{vmatrix}
    		+\dfrac{d_1}{2}
    		\begin{vmatrix}
    			\alpha & \beta_h\\
    			\dfrac{d_1}{2} & \dfrac{\dfrac{d_2}{r}\beta_h-\eta_h}{2}
    		\end{vmatrix}\nonumber\\
    		=&\dfrac{1}{4}\!\left[\!
    		-\eta_h^2\!+\!2\Big (2(\beta_h\!-\!\alpha^2)\!+\!\Big(\dfrac{d_2}{r}\beta_h\!-\!\alpha d_1\Big)\Big )\eta_h\!+\!
    		\Big (2\alpha d_1\dfrac{d_2}{r}\beta_h\!-\!\Big(\dfrac{d_2}{r}\beta_h\Big)^2\!\!-\!d_1^2\beta_h\!\Big )
    		\right].
    		\end{align}
    	Notice that $(\ref{group2})$ yields
    	$$
    	2\alpha d_1\dfrac{d_2}{r}\beta_h-\Big(\dfrac{d_2}{r}\beta_h\Big)^2-d_1^2\beta_h
    	<-\beta_h\Big(\dfrac{d_2}{r}\alpha-d_1\Big)^2\leq 0,
    	$$
    	by elementary properties of quadratic polynomial, there exists a positive constant $\eta_h$ such that $\det{\mathbb P_h}>0$ if and only if the following situations holds:
    	\begin{equation}\label{stronger1}
    		\begin{cases}
    			\Delta_h>0,\\
    			2(\beta_h-\alpha^2)+\Big(\dfrac{d_2}{r}\beta_h-\alpha d_1\Big)\geq 0,
    		\end{cases}
    	\end{equation}
    	where $\Delta_h$ is the discriminant of the quadratic $(\ref{quadratic_h1})$.
    	By calculating this discriminant and substituting $\alpha=\frac{1}{2}(a_2+a_1\beta_h)$ into it, we find
    	\begin{equation*}
    	\begin{aligned}
    		\Delta_h:=&4\left\{
    		\left[
    		(2(\beta_h-\alpha^2)+\Big (\dfrac{d_2}{r}\beta_h-\alpha d_1\Big )
    		\right]^2+
    		\Big (2\alpha d_1\dfrac{d_2}{r}\beta_h-\Big (\dfrac{d_2}{r}\beta_h\Big )^2-d_1^2\beta_h\Big )
    		\right\}\\
    		=&16\big[\,\Big (1+\dfrac{d_2}{r}\Big )\beta_h-\alpha^2-\alpha d_1-\dfrac{1}{4}d_1^2\,\big](\beta_h-\alpha^2)\\
    		=&\Bigg\{
    		-a_1^2\beta_h^2+
    		2\Big[
    		2\Big (1+\dfrac{d_2}{r}\Big )-(a_1a_2+a_1d_1)
    		\Big]\beta_h-(a_2+d_1)^2
    		\Bigg\}
    		 \\
    	&\quad\times
    	\Bigg\{
    	-a_1^2\beta_h^2+2(2-a_1a_2)\beta_h-a_2^2
    	\Bigg\}\\
    	=&\Phi_h(\beta_h)\Psi_h(\beta_h).
    		\end{aligned}
    	\end{equation*}
    	Since we already have $(\ref{group2})$, the equation above implies that $\Delta_h>0$ is equivalent to $(\ref{group1})$.
    	Also, when $\Delta_h>0$ and $\Psi_h(\beta_h)>0$, we have
    	$$
    	2(\beta_h-\alpha^2)+\Big(\dfrac{d_2}{r}\beta_h-\alpha d_1\Big)>
    	\Big (1+\dfrac{d_2}{r}\Big )\beta_h-\alpha^2-\alpha d_1>\frac{1}{4}\Phi_h(\beta_h)>0,
    	$$
    	i.e. the second inequality in $(\ref{stronger1})$ is automatically satisfied.
    	Hence, on the basis of $(\ref{group2})$, there exist positive constants $\beta_h$ and $\eta_h$ such that $\det{\mathbb P_h}>0$ if and only if $(\ref{group1})$ holds. Summing up the discussion above, our assertion has been proved.
    	
    	Now, it remains to show that under the assumptions of Theorem $\ref{thm1}$, there exists positive $\beta_h$ which satisfies $(\ref{group1})$ and $(\ref{group2})$ simultaneously. For this purpose, we denote the positive solution of $(\ref{group1})$ as $S^h_1:=\left((L^h_1)^2,\, (R^h_1)^2\right)$ and positive solution of $(\ref{group2})$ as $S^h_2:=\left( (L^h_2)^2,\,(R^h_2)^2\right)$.
    	We assume for now that we have
    	\begin{equation}\label{d2-new}
    		\dfrac{d_2}{r}<a_2+d_1-1,
    	\end{equation}
    	which is already contained in the case $(\ref{d2h1})$ and is indeed necessary since it comes from the existence and linear stability of heterogeneous steady state.
    	 Thanks to $(\ref{d2-new})$ and $a_1<1$, $S^h_1$ is not empty. On the other hand, $S^h_2$ is not empty due to $a_1a_2<1$.
    	Hence, the argument above allows us to calculate $L^h_1$, $R^h_1$, $L^h_2$ and $R^h_2$.
    	 Since $\Phi_h(\beta_h)=0$ if and only if
    	$$
    	\begin{aligned}
    		\beta_h&=\frac{1}{2a_1^2}
    	\left \{
    	2\Big[\,2\Big (1+\dfrac{d_2}{r}\Big )-a_1(a_2+d_1)\,
    		\Big ]\pm 2\sqrt{\Big[\,2\Big (1+\dfrac{d_2}{r}\Big )-a_1(a_2+d_1)\,
    		\Big ]^2-a_1^2(a_2+d_1)^2}\,
    	\right \}\\
    	&=\frac{1}{a_1^2}
    	\left \{
    	\Big (1+\dfrac{d_2}{r}\Big )+\Big (1+\dfrac{d_2}{r}-a_1(a_2+d_1)\Big )
    	\pm 2\sqrt{\,\Big (1+\dfrac{d_2}{r}\Big )\Big (1+\dfrac{d_2}{r}-a_1(a_2+d_1)\Big )}\,
    	\right \}\\
    	&=\frac{1}{a_1^2}\left \{
    	\sqrt{1+\dfrac{d_2}{r}} \pm
    	\sqrt{1+\dfrac{d_2}{r}-a_1(a_2+d_1)}
    	\right \}^2
    	\end{aligned}
    	$$
    	and $\Psi_h(\beta_h)=0$ if and only if
    	$$
    		\beta_h=\frac{1}{a_1^2}
    	\left \{
    	1+(1-a_1a_2)\pm 2\sqrt{1-a_1a_2}\,
    	\right \}=\frac{1}{a_1^2}\Big ( 1\pm\sqrt{1-a_1a_2}
    	\,\Big )^2,
    	$$
    	it follows that
    	$$
    	\begin{aligned}
    	&L^h_1=\dfrac{\sqrt{1+\dfrac{d_2}{r}}-\sqrt{1+\dfrac{d_2}{r}-a_1(a_2+d_1)}}{a_1},\
    	R^h_1=\dfrac{\sqrt{1+\dfrac{d_2}{r}}+\sqrt{1+\dfrac{d_2}{r}-a_1(a_2+d_1)}}{a_1},\\
    	&L^h_2=\dfrac{1-\sqrt{1-a_1a_2}}{a_1},\qquad\qquad\qquad\qquad\quad
    	R^h_2=\dfrac{1+\sqrt{1-a_1a_2}}{a_1}.
    	\end{aligned}
    	$$
    	Since $L^h_2<\frac{1}{a_1}<R^h_1$, to prove there is overlap part between $S^h_1$ and $S^h_2$ , we need $L^h_1 < R^h_2$, namely
    	\begin{equation}\label{ration}
    		\sqrt{1+\dfrac{d_2}{r}}-\sqrt{1+\dfrac{d_2}{r}-a_1(a_2+d_1)}
    		< 1+\sqrt{1-a_1a_2}.
    	\end{equation}
    	Recall that we also need $(\ref{d2-new})$, hence, in the following part, we verify that $(\ref{d2-new})$ and $(\ref{ration})$ hold under the assumption of Theorem $\ref{thm1}$.
    	
    	First, we consider the case $(\ref{d2h1})$. Since in this case we already have  $(\ref{d2-new})$, it remains to show that when $d_1\leq d_1^h$, where $d^h_1$ is defined in $(\ref{d1h})$, we can derive $(\ref{ration})$ from $(\ref{d2-new})$. By numerator rationalization of $(\ref{ration})$, we obtain
    	\begin{equation}\label{rationed}
    		a_1(a_2+d_1)
    		< \left(1+\sqrt{1-a_1a_2}\,\right)
    		\left\{\sqrt{1+\dfrac{d_2}{r}}+\sqrt{1+\dfrac{d_2}{r}-a_1(a_2+d_1)}\,\right\}.
    	\end{equation}
    	Substituting $\frac{d_2}{r}=a_2+d_1-1$ into the inequality above yields
    	$$
    	a_1\sqrt{a_2+d_1}
    	< \big (1+\sqrt{1-a_1a_2}\,\big )\big (1+\sqrt{1-a_1}\,\big ),
    	$$
    	which is equivalent to $d_1 < d^h_1$. Since $\frac{d_2}{r}$ is strictly smaller than $a_2+d_1-1$,  $(\ref{rationed})$ still holds when $d_1=d_1^h$.
    	Observe that the right hand side of $(\ref{rationed})$ increases in $d_2$, we obtain that $(\ref{ration})$ still holds when $(\ref{d2-new})$ is satisfied.
    	
    	Next, we demonstrate that in the case $(\ref{d2h2})$, we have $(\ref{d2-new})$ and $(\ref{ration})$. Straightforward computations show that $(\ref{ration})$ holds if and only if
    	$$
    	\sqrt{1+\frac{d_2}{r}-a_1(a_2+d_1)}
    	>
    	\frac{a_1(a_2+d_1)-(1+\sqrt{1-a_1a_2}\,)^2}{2(1+\sqrt{1-a_1a_2})}.
    	$$
    	Since $d_1\geq d_1^h$, the right hand side of the inequality above is positive. By squaring this inequality, we obtain that it is equivalent to
    	$$
    	\frac{d_2}{r}\geq
    	\left(
    	\frac{a_1(a_2+d_1)-(1+\sqrt{1-a_1a_2}\,)^2}{2(1+\sqrt{1-a_1a_2})}
    	\right)^2+a_1(a_2+d_1)-1=d_2^h,
    	$$
    	Hence, in this case we have $(\ref{ration})$. On the other hand, it follows from the discussion in $d_1\leq d_1^h$ part that when $d_1>d_1^h$, there is $d_2^h>a_2+d_1-1$. Hence, we also have $(\ref{d2-new})$.

    	Summarizing the discussion above, we draw out that  assumptions of Theorem $\ref{thm1}$ suffice to guarantee the existence of positive $\beta_h$ and $\eta_h$ such that $\mathbb P_h$ is positive definite. By the definition of positive definite matrix, there exists $\varepsilon_1>0$ such that
    	$$
    	\mathbf X^{\mathrm T}\mathbb P_h \mathbf X\geq
    	\varepsilon_1 |\mathbf X|^2.
    	$$
    	Substituting it into $(\ref{eh_differrentiate})$, we have
    	$$
    	\dfrac{\mathrm d}{\mathrm dt}E_h(t)
    		\leq -\varepsilon_1\int_\Omega|\mathbf X|^2\mathrm dx
    		-\dfrac{\eta_h}{c}\int_\Omega\left| \nabla w \right|^2\mathrm dx
    	\leq -\varepsilon_h F_h(t),
    	$$
    	where $\varepsilon_h=\min\{ \varepsilon_1,\frac{\eta_h}{c} \}$. 	
    \end{proof}

    \subsection{Auxiliary problem: systems of ODEs}\label{auxh}
    Since we have obtained the $L^\infty$ convergence of $w$ in  Lemma \ref{l2con}, there exists
    a smooth bounded positive function $\sigma(t)$,  which decays to $0$ as $t\rightarrow\infty$ and satisfies
    \begin{equation}\label{sigma}
    w^*-\sigma(t)\leq w(x,t) \leq w^*+\sigma(t),\quad x\in\Omega,\ t\geq 0.
    \end{equation}
    Then we introduce   the auxiliary ODE system  as follows:
    \begin{equation}\label{odeh}
    	\begin{cases}
    	\dfrac{\mathrm d}{\mathrm dt}\bar u_h=\bar u_h\big[1-\bar u_h-a_2\underline v_h-d_1(w^*-\sigma(t))\big], &\quad t>0,\\
    	\dfrac{\mathrm d}{\mathrm dt}\underline u_h=\underline u_h\big[1-\underline u_h-a_2\bar v_h-d_1(w^*+\sigma(t))\big], &\quad t>0,\\
    	\dfrac{\mathrm d}{\mathrm dt}\bar v_h=r\bar v_h\big[1-a_1\underline u_h-\bar v_h-\dfrac{d_2}{r}(w^*-\sigma(t))\big], &\quad t>0,\\
    	\dfrac{\mathrm d}{\mathrm dt}\underline v_h=r\underline v_h\big[1-a_1\bar u_h-\underline v_h-\dfrac{d_2}{r}(w^*+\sigma(t))\big], &\quad t>0,\\
    \end{cases}
    \end{equation}
    with initial data
    \begin{equation}\label{iniodeh}
    	\begin{aligned}
    		\bar u_h(0)=\bar u^h_0:=\max\{\max\limits_{\bar\Omega}u_0,u^*\},\quad&
    	\underline u_h(0)=\underline u^h_0:=\min \{\min \limits_{\bar\Omega}u_0,u^*\},\\
    	\bar v_h(0)=\bar v^h_0:=\max\{\max\limits_{\bar\Omega}v_0,v^*\},\quad&
    	\underline v_h(0)=\underline v^h_0:=\min \{\min \limits_{\bar\Omega}v_0,v^*\}.
    	\end{aligned}
    \end{equation}
    From $(\ref{iniodeh})$, we infer that  the initial data of $(\ref{odeh})$ satisfies
    \begin{equation}\label{ode_ini_h_compare}
    	0<\underline u^h_0\leq u^*\leq \bar u^h_0\leq 1,\quad
    	0<\underline v^h_0\leq v^*\leq \bar v^h_0 < +\infty.
    \end{equation}

    By Picard-Lindel$\ddot o$f theorem, extension theorem of solution as well as comparison theorem of ODEs, it is standard to obtain the global existence and uniqueness of solutions of $(\ref{odeh})$-($\ref{iniodeh}$) in the following lemma. We   omit its proof and refer to \cite[Lemma 3.1]{tao} for details.
    \begin{lemma}\label{positveode}
     There exists a unique global solution of $(\ref{odeh})$-$(\ref{iniodeh})$ satisfying
    	$$
    	\begin{aligned}
    			0<\bar u_h(t)\leq 1,\quad & 0<\underline u_h(t)\leq 1,\\
    			0<\bar v_h(t)\leq\max\{\bar v^h_0,1\} ,\quad & 0<\underline v_h(t)\leq 1.
    		\end{aligned}
    	$$
    \end{lemma}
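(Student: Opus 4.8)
The plan is to treat (\ref{odeh})--(\ref{iniodeh}) as a nonautonomous system of four scalar logistic-type equations (the nonautonomy entering only through $\sigma(t)$) and to run the classical existence--uniqueness--positivity--boundedness program. First I would invoke the Picard--Lindel\"of theorem: writing $Y=(\bar u_h,\underline u_h,\bar v_h,\underline v_h)$, the right-hand side of (\ref{odeh}) is a polynomial in the components of $Y$ whose coefficients are continuous (indeed smooth) and bounded in $t$, since $\sigma$ is smooth and bounded by (\ref{sigma}). Hence the vector field is locally Lipschitz in $Y$ uniformly on compact $t$-intervals, and there is a unique $C^1$ solution on a maximal interval $[0,T_{\max})$.

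Next I would establish strict positivity on $[0,T_{\max})$. Each of the four equations has the multiplicative form $\dot X = X\,h_X(t)$ with $h_X$ continuous along the solution, so $X(t)=X(0)\exp\big(\int_0^t h_X\big)$ stays strictly positive because the four initial values are positive by (\ref{ode_ini_h_compare}) and $\int_0^t h_X$ is finite for $t<T_{\max}$; equivalently, the positive orthant is invariant. This positivity is exactly what gives the coupling terms a definite sign in the next step.

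For the a priori upper bounds I would compare each component with a scalar logistic equation. The two lower-solution components are clean: in the $\underline u_h$ and $\underline v_h$ equations the coupling terms $-a_2\bar v_h$, $-a_1\bar u_h$ and the acid terms $-d_1(w^*+\sigma)$, $-\frac{d_2}{r}(w^*+\sigma)$ are all nonpositive (by positivity of the components and $w^*,\sigma>0$), so $\dot{\underline u}_h\le \underline u_h(1-\underline u_h)$ and $\dot{\underline v}_h\le r\underline v_h(1-\underline v_h)$. Comparing with $\dot y=y(1-y)$ and $\dot y=ry(1-y)$ and using $\underline u^h_0\le u^*\le 1$ together with $\underline v^h_0\le v^*<1$ (the strict inequality $v^*<1$ following from the existence condition $\frac{d_2}{r}>a_2+d_1-1$ and $a_1<1$) yields $\underline u_h,\underline v_h\le 1$. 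For the two upper-solution components $\bar u_h,\bar v_h$ the acid terms involve the lower envelope $w^*-\sigma(t)$, so I would again compare with a logistic equation; the bounds $\bar u_h\le 1$ and $\bar v_h\le\max\{\bar v^h_0,1\}$ then follow from $\bar u^h_0\le 1$ once $w^*-\sigma(t)\ge 0$, the coupling terms $-a_2\underline v_h$, $-a_1\underline u_h$ only helping.

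The \emph{main obstacle} is precisely this last comparison for the upper solutions: the forcing through $w^*-\sigma(t)$ must not push $\bar u_h,\bar v_h$ above the claimed thresholds. Mere boundedness of $\sigma$ already confines all four components to a finite box and hence suffices to exclude finite-time blow-up, whereas the sharp values $\bar u_h\le 1$ and $\bar v_h\le\max\{\bar v^h_0,1\}$ rely on the nonnegativity of the lower envelope $w^*-\sigma(t)$; guaranteeing this (or otherwise controlling the sign of the forcing) is where the construction of $\sigma$ enters, and the detailed logistic estimates follow those of \cite[Lemma 3.1]{tao}. Once the four components are confined to a bounded, positively invariant region for all $t\in[0,T_{\max})$, the solution cannot blow up in finite time, so $T_{\max}=\infty$ and global existence follows.
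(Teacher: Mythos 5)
Your proposal is correct and follows exactly the route the paper intends: the paper in fact omits the proof of Lemma \ref{positveode}, stating only that it follows from the Picard--Lindel\"of theorem, the extension theorem and the comparison theorem of ODEs (with a reference to \cite[Lemma 3.1]{tao}), which is precisely your local existence / positivity via the multiplicative structure / logistic comparison / no-blow-up scheme. The subtlety you flag --- that the sharp constants $\bar u_h\le 1$ and $\bar v_h\le\max\{\bar v^h_0,1\}$ require $w^*-\sigma(t)\ge 0$, which the bare definition of $\sigma$ in (\ref{sigma}) does not guarantee when $\|w_0\|_{L^\infty(\Omega)}$ greatly exceeds $w^*$ --- is a genuine one, but it is harmless downstream, since boundedness of $\sigma$ alone already confines all four components to a fixed box, and only such a uniform a priori bound (not the specific constants) is used in Lemmas \ref{clamp_station} and \ref{ode_decay_h}.
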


Now, first we show that $u^*$, $v^*$ are constrained by the solution of $(\ref{odeh})$-$(\ref{iniodeh})$.
\begin{lemma}\label{clamp_station}
 The solution of $(\ref{odeh})$-$(\ref{iniodeh})$ satisfies
	$$
	\underline u_h (t)  \leq u^*\leq \bar u_h(t),\quad
		\underline v_h(t)   \leq v^*\leq \bar v_h(t)  , \quad t\geq 0.
	$$
\end{lemma}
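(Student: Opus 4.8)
The plan is to run a simultaneous invariant-region (first-exit-time) argument on the four functions, exploiting the fact that the constant state $(u^*,v^*,w^*)$ satisfies the steady-state identities
$$1-u^*-a_2v^*-d_1w^*=0,\qquad 1-a_1u^*-v^*-\tfrac{d_2}{r}w^*=0.$$
The coupling in $(\ref{odeh})$ is arranged precisely so that each upper equation sees the lower partner and each lower equation sees the upper partner; substituting these identities makes the relevant right-hand sides collapse to differences we can control, and the strictly positive perturbation $\sigma(t)$ supplies a strict sign. Global existence and positivity of the four components are already furnished by Lemma $\ref{positveode}$, and the correct ordering at $t=0$ is recorded in $(\ref{ode_ini_h_compare})$.

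First I would record the four boundary computations. Evaluating the $\bar u_h$-equation at $\bar u_h=u^*$ and using the first identity gives
$$\tfrac{\mathrm d}{\mathrm dt}\bar u_h\big|_{\bar u_h=u^*}=u^*\big[a_2(v^*-\underline v_h)+d_1\sigma(t)\big],$$
which is strictly positive whenever $\underline v_h\le v^*$. Symmetrically, at $\underline u_h=u^*$ one obtains $u^*\big[-a_2(\bar v_h-v^*)-d_1\sigma(t)\big]$, strictly negative whenever $\bar v_h\ge v^*$; at $\bar v_h=v^*$ one obtains $rv^*\big[a_1(u^*-\underline u_h)+\tfrac{d_2}{r}\sigma(t)\big]$, strictly positive whenever $\underline u_h\le u^*$; and at $\underline v_h=v^*$ one obtains $rv^*\big[-a_1(\bar u_h-u^*)-\tfrac{d_2}{r}\sigma(t)\big]$, strictly negative whenever $\bar u_h\ge u^*$. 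Thus at any instant where one of the four target inequalities is tight, the corresponding component is pushed strictly back into the region $\{\underline u_h\le u^*\le\bar u_h,\ \underline v_h\le v^*\le\bar v_h\}$, as long as the single companion inequality it depends on merely holds weakly.

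With these sign relations in hand I would close the argument by continuity. By $(\ref{ode_ini_h_compare})$ all four inequalities hold at $t=0$, so $\mathcal T:=\{t\ge0:\text{all four hold on }[0,t]\}$ is a nonempty closed interval $[0,t_0]$. If $t_0<\infty$, then by continuity all four inequalities hold weakly at $t_0$ while at least one is an equality there. Every component that is strict at $t_0$ stays strict on a short interval by continuity; every component that is tight at $t_0$ has, by the boundary computation above — whose hypothesis is exactly the weak validity at $t_0$ of its companion inequality — a derivative pointing strictly inward, hence becomes strict just after $t_0$. Therefore all four inequalities hold on $[0,t_0+\delta]$ for some $\delta>0$, contradicting the maximality of $t_0$; so $t_0=\infty$.

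The main obstacle is precisely the cyclic coupling among the four equations: raising $\bar u_h$ relies on controlling $\underline v_h$, controlling $\underline v_h$ relies on $\bar u_h$, and so on, so that no single scalar comparison principle applies directly. What makes the simultaneous argument go through is that each boundary derivative depends on only one companion inequality, which is available weakly at the first-exit time, while the strict positivity of $\sigma(t)$ guarantees a strictly inward-pointing vector field even when that companion inequality is itself tight — this removes any degenerate configuration in which several of the inequalities could fail at once.
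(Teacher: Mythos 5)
Your proposal is correct, and it proves the lemma by a genuinely different route from the paper. The paper's proof is an energy/Gronwall argument: it multiplies the equations for the differences $\bar u_h-u^*$, $\underline u_h-u^*$, $\bar v_h-v^*$, $\underline v_h-v^*$ by the appropriate negative parts, drops the $\sigma$-terms using only $\sigma\geq 0$, applies Young's inequality, and concludes that the sum $[(u^*-\bar u_h)_+]^2+[(\underline u_h-u^*)_+]^2+[(v^*-\bar v_h)_+]^2+[(\underline v_h-v^*)_+]^2$ satisfies a linear differential inequality with vanishing initial value, hence is identically zero by Gronwall. Your first-exit-time argument is more geometric and arguably more transparent: your four boundary computations are correct (the steady-state identities $1-u^*-a_2v^*-d_1w^*=0$ and $1-a_1u^*-v^*-\tfrac{d_2}{r}w^*=0$ do hold for $(u^*,v^*,w^*)=(1-(a_2+d_1)v_h,v_h,v_h)$), each tight inequality at the putative exit time $t_0$ only needs its companion to hold weakly there, which is guaranteed since all four hold on $[0,t_0]$, and the strict positivity of $\sigma$, $d_1$, $d_2$, $u^*$, $v^*$ does give a strictly inward-pointing derivative even in the fully degenerate configuration. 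The trade-offs are worth noting: your argument leans essentially on $\sigma(t)>0$ to break ties on the boundary (which is available here, since $\sigma$ is constructed as a positive function, but would require a perturbation if $\sigma$ were merely nonnegative), whereas the paper's Gronwall computation needs no strictness; moreover, the paper's positive-part technique is reused almost verbatim in Lemma \ref{ode_clamp_solution} to compare the PDE solution with the ODE upper/lower solutions, where a pointwise exit-time argument would not carry over as directly.
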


\begin{proof}
	We introduce the notations
	$$
	f_+:=\max\{f,0\}\quad and \quad f_-:=\min\{f,0\},
	$$
	and they enjoy the properties that
	$$
	f_+\cdot f_-\equiv 0,\quad f\cdot f_+=f_+^2\quad and\quad f\cdot f_-=f_-^2 .
	$$
	With the notations above, it remains to show
	$$
	(u^*-\bar u_h)_+=(\underline u_h-u^*)_+=(v^*-\bar v_h)_+=(\underline v_h-v^*)_+=0,\quad t > 0.
	$$
	By the definition of $u^*,v^*,w^*$, we have
	$$
	\begin{aligned}
		\dfrac{\mathrm d}{\mathrm dt}(\bar u_h-u^*)=\bar u_h\big[(u^*-\bar u_h)+a_2(v^*-\underline v_h)+d_1\sigma(t)\big].
	\end{aligned}
	$$
	Multiplying the above equation with $-(\bar u_h-u^*)_+$, we obtain
	$$
	\begin{aligned}
		\dfrac{1}{2}\dfrac{\mathrm d}{\mathrm dt}\big[(u^*-\bar u_h)_+\big]^2
		=&\bar u_h\big[-(u^*-\bar u_h)^2_++a_2(u^*-\bar u_h)_+(\underline v_h-v^*)-d_1\sigma(t)(u^*-\bar u_h)_+\big]\\
	\leq &\bar u_h\big[-(u^*-\bar u_h)^2_++a_2(u^*-\bar u_h)_+(\underline v_h-v^*)_+\big],
	\end{aligned}
	$$
	thanks to the positivity of $\sigma(t)$. Since $\bar u_h\leq 1$, by Young's inequality, we obtain
	$$
	\dfrac{\mathrm d}{\mathrm dt}\big[(u^*-\bar u_h)_+\big]^2
	\leq \frac{a_2^2}{2}\, [(\underline v_h-v^*)_+]^2.
	$$
	In the same manner, we have
	$$
	\dfrac{\mathrm d}{\mathrm dt}\big[(\underline u_h-u^*)_+\big]^2\leq
		\frac{a_2^2}{2}\,[(v^*-\bar v_h)_+]^2,
	$$
	$$
	\dfrac{\mathrm d}{\mathrm dt}\big[(v^*-\bar v_h)_+\big]^2
	\leq \frac{r}{2}\max\{1,\bar v^h_0\}a_1^2\,[(\underline u_h-u^*)_+]^2
	$$
	and
	$$
	\dfrac{\mathrm d}{\mathrm dt}\big[(\underline v_h-v^*)_+\big]^2
	\leq \frac{r}{2}\max\{1,\bar v^h_0\}a_1^2\,[(u^*-\bar u_h)_+]^2.
	$$
	Summing up the above four inequalities together, we find
	$$
	\begin{aligned}
		&\dfrac{\mathrm d}{\mathrm dt}
		\Big\{\big[(u^*-\bar u_h)_+\big]^2+\big[(\underline u_h-u^*)_+\big]^2
		+\big[(v^*-\bar v_h)_+\big]^2+\big[(\underline v_h-v^*)_+\big]^2\Big\}\\
		\leq & \,k_0\Big\{\big[(u^*-\bar u_h)_+\big]^2+\big[(\underline u_h-u^*)_+\big]^2
		+\big[(v^*-\bar v_h)_+\big]^2+\big[(\underline v_h-v^*)_+\big]^2\Big\},
	\end{aligned}
	$$
	where $k_0=\frac{1}{2}\max\{a_2^2,\,r\max\{1,\bar v^h_0\}\,a_1^2\}$. Thanks to $(\ref{ode_ini_h_compare})$, we have
	$$
	[(u^*-\bar u^h_0)_+\big]^2=\big[(\underline u^h_0-u^*)_+\big]^2=\big[(v^*-\bar v^h_0)_+\big]^2=\big[(\underline v^h_0-v^*)_+\big]^2=0.
	$$
	By Grownwall's inequality, we obtain
	$$
	[(u^*-\bar u_h)_+\big]^2=\big[(\underline u_h-u^*)_+\big]^2=\big[(v^*-\bar v_h)_+\big]^2=\big[(\underline v_h-v^*)_+\big]^2=0,
	$$
	which ends the proof.
\end{proof}

Secondly, we  show that $(\bar u_h,\bar v_h)$ is actually the upper solution and  $(\underline u_h,\underline v_h)$ is the lower solution of $(u,v)$ in  $(\ref{model})$-$(\ref{ini})$.
\begin{lemma}\label{ode_clamp_solution}
 Suppose that the assumptions of Theorem $\ref{thm1}$ hold, $(u,v,w)$ is the global solution of $(\ref{model})$-$(\ref{ini})$, and $(\bar u_h,\underline u_h,\bar v_h,\underline v_h)$ is the solution of $(\ref{odeh})$-$(\ref{iniodeh})$, then
	$$
	\begin{aligned}
			\underline u_h(t)\leq u(x,t)\leq  \bar u_h(t),\quad x\in\Omega,\ t\geq 0,\\
		\underline v_h(t)\leq v(x,t)\leq  \bar v_h(t),\quad x\in\Omega,\ t\geq 0.
		\end{aligned}
	$$
\end{lemma}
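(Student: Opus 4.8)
The plan is to run an energy estimate on the positive parts of the four differences $u-\bar u_h$, $\underline u_h-u$, $v-\bar v_h$ and $\underline v_h-v$ simultaneously, in the same spirit as the proof of Lemma $\ref{clamp_station}$ and reusing the notation $f_+:=\max\{f,0\}$. It suffices to show that
$$
\Phi(t):=\int_\Omega\big[(u-\bar u_h)_+^2+(\underline u_h-u)_+^2+(v-\bar v_h)_+^2+(\underline v_h-v)_+^2\big]\,\mathrm dx
$$
vanishes identically. By $(\ref{iniodeh})$ we have $u_0\leq\bar u^h_0$, $\underline u^h_0\leq u_0$, $v_0\leq\bar v^h_0$ and $\underline v^h_0\leq v_0$, so $\Phi(0)=0$; the conclusion then follows from Gronwall's inequality once I establish $\frac{\mathrm d}{\mathrm dt}\Phi(t)\leq K\Phi(t)$ for some constant $K$.

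First I would treat the two $u$-differences, which carry no diffusion. Writing the first equation of $(\ref{model})$ as $u_t=u\,g(u,v,w)$ with $g(u,v,w)=1-u-a_2v-d_1w$ and subtracting the $\bar u_h$-equation of $(\ref{odeh})$, the identity
$$
(u-\bar u_h)_t=u\big[g(u,v,w)-g(\bar u_h,\underline v_h,w^*-\sigma)\big]+(u-\bar u_h)\,g(\bar u_h,\underline v_h,w^*-\sigma)
$$
exposes a diagonal part together with the cross-couplings $-a_2(v-\underline v_h)$ and $-d_1(w-w^*+\sigma)$. After multiplying by $(u-\bar u_h)_+$ and integrating, the diagonal term $-\int_\Omega u(u-\bar u_h)_+^2$ is nonpositive; the $w$-term is nonpositive because $(\ref{sigma})$ gives $w-w^*+\sigma\geq0$; and the $v$-cross term, after decomposing $v-\underline v_h=(v-\underline v_h)_+-(\underline v_h-v)_+$, yields a nonpositive piece plus a contribution controlled by $\tfrac{a_2}{2}\big[(u-\bar u_h)_+^2+(\underline v_h-v)_+^2\big]$ via Young's inequality and $u\leq1$. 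The coefficient $g(\bar u_h,\underline v_h,w^*-\sigma)$ is bounded by Lemma $\ref{local_existence}$ and Lemma $\ref{positveode}$. The $\underline u_h-u$ difference is handled identically, its cross-coupling now pairing $(\underline u_h-u)_+$ with $(v-\bar v_h)_+$ and its $w$-term being killed by $w\leq w^*+\sigma$ from $(\ref{sigma})$.

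The genuinely new point is the $v$-differences, where the degenerate operator $D\nabla\cdot((1-u)\nabla v)$ must be absorbed. Since $\bar v_h$ is spatially homogeneous, $\nabla\bar v_h\equiv0$, so $\nabla\cdot((1-u)\nabla v)=\nabla\cdot((1-u)\nabla(v-\bar v_h))$; testing against $(v-\bar v_h)_+$ and integrating by parts, the boundary integral vanishes by the Neumann condition $\partial_\nu v=0$, and what survives is $-D\int_\Omega(1-u)\,|\nabla(v-\bar v_h)_+|^2$, which is nonpositive because $1-u>0$ by $(\ref{u_upperbound})$. Thus the diffusion carries a favorable sign and may simply be discarded; the reaction part is processed exactly as in the $u$ case, the $w$-term again eliminated by $(\ref{sigma})$ and the $u$-cross term pairing $(v-\bar v_h)_+$ with $(\underline u_h-u)_+$ after Young's inequality and the uniform bound on $v$ from $(\ref{v_upperbound})$. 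Summing the four estimates, every cross term reorganizes into the competitive pairings (upper $u$ with lower $v$, upper $v$ with lower $u$, and so on), leaving $\frac{\mathrm d}{\mathrm dt}\Phi(t)\leq K\Phi(t)$ with $K$ depending only on $a_1,a_2,d_1,d_2,r$ and the uniform bounds of Lemmas $\ref{local_existence}$ and $\ref{positveode}$.

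The main obstacle is precisely the degenerate diffusion: one cannot verify a pointwise supersolution inequality for $\bar v_h$ without already knowing $u\geq\underline u_h$, so a naive comparison-principle argument would be circular. The energy method above sidesteps this by coupling all four inequalities at once, and the structural fact that testing the degenerate operator against $(v-\bar v_h)_+$ produces the manifestly nonpositive form $-D\int_\Omega(1-u)|\nabla(v-\bar v_h)_+|^2$ is exactly what renders the degeneracy harmless here.
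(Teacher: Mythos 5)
Your proposal is correct and follows essentially the same route as the paper's proof: the paper works with the negative parts $\bar U_-,\underline U_-,\bar V_-,\underline V_-$ of the differences (which coincide, up to sign, with your positive parts), uses the same decomposition into a diagonal term, competitive cross-couplings handled by Young's inequality, and $w$-terms killed by $(\ref{sigma})$, discards the nonpositive term $-D\int_\Omega(1-u)|\nabla \bar V_-|^2$ after integrating by parts, and closes with Gronwall from vanishing initial data. No substantive differences to report.
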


\begin{proof}
	To simplify the notation, we introduce new variables
	$$
	\begin{aligned}
			\bar U(x,t):=\bar u_h(t)-u(x,t),\quad \underline U(x,t):=u(x,t)-\underline u_h(t),\\
			\bar V(x,t):=\bar v_h(t)-v(x,t),\quad \underline V(x,t):=v(x,t)-\underline v_h(t).
		\end{aligned}
	$$
	By the notation above, we only need to show
	$$
	\bar U_-=\underline U_-=\bar V_-=\underline V_-\equiv0,
	\quad x\in\Omega,\ t\geq 0.
	$$
	Thanks to the key property $(\ref{sigma})$, direct computations show that
	\begin{equation*}
		\begin{aligned}
			\bar U_t=&\bar U\big[
			1-\bar u_h-a_2\underline v_h-d_1(w^*-\sigma)
			\big]+u\big[\,-\bar U+a_2\underline V+d_1(w-(w^*-\sigma))\big]\\
			\geq &\big[
			1-u-\bar u_h-a_2\underline v_h-d_1(w^*-\sigma)
			\big]\bar U+a_2u\underline V.
		\end{aligned}
	\end{equation*}
	Multiplying $(\ref{barU})$ with $\bar U_-$ and using Young's inequality yields
	\begin{equation*}
		\begin{aligned}
			\dfrac{1}{2}\dfrac{\mathrm d}{\mathrm dt}(\bar U_-)^2
			\leq& \big[1-u-\bar u_h-a_2\underline v_h-d_1(w^*-\sigma)
			\big]\bar U^2_-+a_2u\bar U_-\underline V\\
			\leq& (1+d_1\sigma)\bar U^2_-+a_2u\bar U_-\underline V_-
			\leq (1+d_1\sigma+\dfrac{1}{2}u)\bar U^2_-+\dfrac{a_2^2}{2} \underline V^2_-.
		\end{aligned}
	\end{equation*}
	Integrating over $\Omega$, we have
	\begin{equation}\label{barU}
		\dfrac{1}{2}\dfrac{\mathrm d}{\mathrm dt}\int_\Omega(\bar U_-)^2\mathrm dx
		\leq \int_\Omega(2+d_1\sigma+\dfrac{1}{2}u)\bar U^2_-\,\mathrm dx
		+\dfrac{a_2^2}{2}\int_\Omega \underline V^2_-\,\mathrm dx.
	\end{equation}
	In the same manner, we obtain
	\begin{equation}
		\dfrac{1}{2}\dfrac{\mathrm d}{\mathrm dt}\int_\Omega(\underline U_-)^2\mathrm dx
		\leq \int_\Omega(2+d_1\sigma+\dfrac{1}{2}u)\bar U^2_-\,\mathrm dx
		+\dfrac{a_2^2}{2}\int_\Omega \bar V^2_-\,\mathrm dx.
	\end{equation}

	Now, we consider $\bar V_-$ and $\underline V_- $. Similarly,
	$$
	\begin{aligned}
			\bar V_t=&D\nabla \cdot((1-u) \nabla \bar V)+r\bar V\big[
			1-a_1\underline u_h-a_2\bar v_h-\dfrac{d_2}{r}(w^*-\sigma)
			\big]+rv\big(a_1\underline U-\bar V+\dfrac{d_2}{r}(w-(w^*-\sigma))\big)\\
			\geq &D\nabla \cdot((1-u) \nabla \bar V)+r\big[
			1-v-a_1\underline u_h-a_2\bar v_h-\dfrac{d_2}{r}(w^*-\sigma)
			\big]\bar V+ra_1v\underline U,
		\end{aligned}
	$$
	Multiply the inequality above with $\bar V_-$ and it follows that
	\begin{equation*}
		\begin{aligned}
			\dfrac{1}{2}\dfrac{\mathrm d}{\mathrm dt}(\bar V_-)^2
			\leq& \big[ D\nabla \cdot((1-u) \nabla \bar V)\big]\bar V_-
			+\big[1-v-a_1\underline u_h-a_2\bar v_h-\dfrac{d_2}{r}(w^*-\sigma)
			\big]\bar V^2_- + a_1v\underline U\bar V_-\\
			\leq& \big[ D\nabla \cdot((1-u) \nabla \bar V)\big]\bar V_-
			+\big[1-v+\dfrac{d_2}{r}\sigma
			\big]\bar V^2_- + a_1v\underline U_-\bar V_-\\
			\leq& \big[ D\nabla \cdot((1-u) \nabla \bar V)\big]\bar V_-
			+(1+\dfrac{d_2}{r}\sigma+\dfrac{1}{2}v^2)\bar U^2_-+\dfrac{a_1^2}{2} \underline V^2_-.
		\end{aligned}
	\end{equation*}
	By integrating over $\Omega$ and after integrating by part, we have
	\begin{eqnarray}
			\dfrac{1}{2}\dfrac{\mathrm d}{\mathrm dt}\int_\Omega(\bar V_-)^2\mathrm dx
		&\leq &- D\int_\Omega (1-u) |\nabla \bar V_-|^2\mathrm dx
		+\int_\Omega(1+\dfrac{d_2}{r}\sigma+\dfrac{1}{2}v^2)\bar V^2_-\,\mathrm dx
		+\dfrac{a_1^2}{2}\int_\Omega \underline V^2_-\,\mathrm dx\cr
		&& \leq \int_\Omega(1+\dfrac{d_2}{r}\sigma+\dfrac{1}{2}v^2)\bar V^2_-\,\mathrm dx
		+\dfrac{a_1^2}{2}\int_\Omega \underline V^2_-\,\mathrm dx.
	\end{eqnarray}
	In the same manner, we obtain
	\begin{equation}\label{underV}
	    \dfrac{1}{2}\dfrac{\mathrm d}{\mathrm dt}\int_\Omega(\underline V_-)^2\mathrm dx
		\leq \int_\Omega(1+\dfrac{d_2}{r}\sigma+\dfrac{1}{2}v^2)\underline V^2_-\,\mathrm dx
		+\dfrac{a_1^2}{2}\int_\Omega \bar V^2_-\,\mathrm dx.
	\end{equation}
	Adding $(\ref{barU})$-$(\ref{underV})$ together, due to $(\ref{v_upperbound})$ and $\sigma$ is bounded, there exists a constant $k_1>0$ such that
	\begin{equation*}
		\begin{aligned}
			\dfrac{\mathrm d}{\mathrm dt}\int_\Omega\big[
			 (\bar U_-)^2+(\underline U_-)^2+(\bar V_-)^2+(\underline V_-)^2\big]\mathrm dx
			 \leq k_1 \int_\Omega\big[
			 (\bar U_-)^2+(\underline U_-)^2+(\bar V_-)^2+(\underline V_-)^2\big]\mathrm dx.
		\end{aligned}
	\end{equation*}
	Since $(\ref{iniodeh})$ implies that
	$$
	 (\bar U_-)^2(0)=(\underline U_-)^2(0)=(\bar V_-)^2(0)=(\underline V_-)^2(0)=0,
	$$
	our conclusion comes directly after using Grownwall's inequality.
\end{proof}

\subsection{$L^\infty$ convergence of $u,v$ in the heterogeneous state}\label{ifih}
In Lemmas \ref{clamp_station} and \ref{ode_clamp_solution}, we have derived that
$$
\begin{aligned}
\underline u_h(t)\leq u(x,t),\, u^*\leq  \bar u_h(t),\quad x\in\Omega,\ t\geq 0,\\
\underline v_h(t)\leq v(x,t),\, v^*\leq  \bar v_h(t),\quad x\in\Omega,\ t\geq 0.
\end{aligned}
$$
Now we are ready to prove the $L^\infty$ convergence of $u$, $v$ to $u^*$, $v^*$ respectively.
\begin{lemma}\label{ode_decay_h}
	Suppose that the assumptions of Theorem $\ref{thm1}$ hold, $(u,v,w)$ is the solution of $(\ref{model})$-$(\ref{ini})$, then $u$ and $v$ satisfy
	$$
	||u-u^*||_{L^\infty(\Omega)}+||v-v^*||_{L^\infty(\Omega)}\rightarrow 0
		\quad as\quad t\rightarrow\infty.
	$$
\end{lemma}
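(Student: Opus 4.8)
The plan is to read off the $L^\infty$ convergence of $u$ and $v$ directly from the auxiliary ODEs, using the two-sided bounds already established. By Lemmas \ref{clamp_station} and \ref{ode_clamp_solution}, for each $x$ and $t$ both $u(x,t)$ and $u^*$ lie in $[\underline u_h(t),\bar u_h(t)]$, and both $v(x,t)$ and $v^*$ lie in $[\underline v_h(t),\bar v_h(t)]$; consequently
$$
\|u(\cdot,t)-u^*\|_{L^\infty(\Omega)}\leq \bar u_h(t)-\underline u_h(t),\qquad
\|v(\cdot,t)-v^*\|_{L^\infty(\Omega)}\leq \bar v_h(t)-\underline v_h(t).
$$
So it suffices to show that each of the four components of (\ref{odeh}) converges to its stationary value, whence $\bar u_h-\underline u_h\to0$ and $\bar v_h-\underline v_h\to0$. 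The structural observation I would exploit is that (\ref{odeh}) is block diagonal: the equations for $\bar u_h$ and $\underline v_h$ involve only the pair $(\bar u_h,\underline v_h)$, while those for $\underline u_h$ and $\bar v_h$ involve only $(\underline u_h,\bar v_h)$, the two planar blocks being coupled solely through the prescribed scalar $\sigma(t)$ of (\ref{sigma}). Each block is therefore a non-autonomous perturbation --- vanishing as $\sigma(t)\to0$ --- of the autonomous weak-competition system whose unique positive equilibrium is exactly $(u^*,v^*)$.

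For each block I would use a Lotka--Volterra Lyapunov function of the logarithmic type already employed in Lemma \ref{uvwh}. For $(\bar u_h,\underline v_h)$, set
$$
V(t)=c_1\Big(\bar u_h-u^*-u^*\ln\tfrac{\bar u_h}{u^*}\Big)
+c_2\Big(\underline v_h-v^*-v^*\ln\tfrac{\underline v_h}{v^*}\Big),
$$
with $c_1,c_2>0$ to be chosen. Writing $\bar X=\bar u_h-u^*$, $\underline Y=\underline v_h-v^*$ and using the defining identities $1-u^*-a_2v^*-d_1w^*=0$ and $1-a_1u^*-v^*-\tfrac{d_2}{r}w^*=0$, a direct computation yields
$$
\dot V=-c_1\bar X^2-c_2r\,\underline Y^2-(c_1a_2+c_2ra_1)\,\bar X\,\underline Y
+\sigma(t)\big(c_1d_1\,\bar X-c_2d_2\,\underline Y\big).
$$
Taking $c_1=a_1$ and $c_2=a_2/r$ makes the quadratic part negative definite precisely because $a_1a_2<1$, so the first three terms are $\leq-\delta(\bar X^2+\underline Y^2)$ for some $\delta>0$, while the last term is a $\sigma(t)$-small perturbation linear in the bounded deviations. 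The identical scheme applies to $(\underline u_h,\bar v_h)$.

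The hard part is that $\sigma(t)$ is known only to tend to $0$, not to be integrable, so merely integrating $\dot V$ does not close the argument. I would instead upgrade the estimate to $\dot V\leq-aV+C\sigma(t)$ for some $a>0$; a Gronwall/convolution argument then forces $V\to0$ whenever $\sigma\to0$, with no integrability needed. This upgrade requires $V\leq C'(\bar X^2+\underline Y^2)$, hence uniform positive lower bounds on the components. The bound $\bar u_h\geq u^*>0$ is free from Lemma \ref{clamp_station}, but the component pinned from above, $\underline v_h\leq v^*$, must be shown not to decay to $0$; this persistence step is the crux. I would extract it from the Lyapunov structure itself: if $\underline v_h$ approached $0$ the logarithmic term would drive $V\to+\infty$, yet once $\sigma$ is small the dissipation $-\delta\underline Y^2$ dominates on the set $\{\underline v_h\leq v^*/2\}$ and forces $V$ strictly downward there, capping $V$ and contradicting the blow-up. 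With $\underline v_h$ (and symmetrically $\underline u_h$) thereby bounded below, the quadratic lower bound for the logarithmic entropy gives $V\leq C'(\bar X^2+\underline Y^2)$, the Gronwall step applies, and $(\bar u_h,\underline v_h)\to(u^*,v^*)$; the twin block gives $(\underline u_h,\bar v_h)\to(u^*,v^*)$. Thus $\bar u_h-\underline u_h\to0$ and $\bar v_h-\underline v_h\to0$, and the squeeze above finishes the proof. An alternative to the hand-made persistence argument is to invoke the theory of asymptotically autonomous systems, the limiting planar system having $(u^*,v^*)$ as its globally asymptotically stable equilibrium.
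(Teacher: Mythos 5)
Your proposal is correct, and it reaches the conclusion by a genuinely different route from the paper. The paper does not split (\ref{odeh}) into the two planar blocks $(\bar u_h,\underline v_h)$ and $(\underline u_h,\bar v_h)$; instead it forms the single scalar quantity $\ln\frac{\bar u_h}{\underline u_h}+\mathcal A_0\ln\frac{\bar v_h}{\underline v_h}$, whose derivative (see (\ref{fracode})) equals $-\mathcal A_1\bigl((\bar u_h-\underline u_h)+(\bar v_h-\underline v_h)\bigr)+\mathcal A_2\sigma(t)$ with $\mathcal A_1>0$ precisely because $a_1a_2<1$ --- the same weak-competition condition your choice $c_1=a_1$, $c_2=a_2/r$ exploits for negative definiteness. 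The paper's persistence step is also different in mechanism: it first shows by contradiction (using the constant decay rate forced by a large gap) that the gap $(\bar u_h-\underline u_h)+(\bar v_h-\underline v_h)$ drops below a threshold at some finite time, then shows via the inequalities $\frac{b-a}{b}\le\ln\frac{b}{a}\le\frac{b-a}{a}$ that it can never grow back past $\tfrac13\min\{u^*,v^*\}$, which yields the uniform lower bound $\kappa$ on $\underline u_h,\underline v_h$; your barrier argument from the blow-up of the logarithmic entropy on $\{\underline v_h\le v^*/2\}$ achieves the same end and is sound (it needs, and has, the free bounds $\bar u_h\ge u^*$, $\bar v_h\ge v^*$ from Lemma \ref{clamp_station} to keep $V$ finite off that set). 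Both arguments then close identically with a differential inequality of the form $\dot L\le -aL+C\sigma(t)$ and the observation that $\sigma\to0$ suffices without integrability (the paper via L'H\^opital, you via the convolution estimate). What your decomposition buys is a cleaner structure --- two decoupled classical Goh-type Lyapunov arguments pinned directly to $(u^*,v^*)$, giving componentwise convergence of all four ODE solutions at once --- whereas the paper's combined log-ratio functional tracks only the upper--lower gaps but integrates more directly with the exponential-rate refinement carried out later in the proof of Theorem \ref{thm1}.
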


\begin{proof}
	 For convenience, we turn $(\ref{odeh})$ in a form convenient to treat. Thanks to the positivity obtained in Lemma $\ref{positveode}$, we rewrite $(\ref{odeh})$ in the following form
	$$
	\begin{cases}
		\dfrac{(\bar u_h)_t}{\bar u_h}=\big[1-\bar u_h-a_2\underline v_h-d_1(w^*-\sigma(t))\big],&\quad t>0,\\
    	\dfrac{(\underline u_h)_t}{\underline u_h}=\big[1-\underline u_h-a_2\bar v_h-d_1(w^*+\sigma(t))\big],&\quad t>0,\\
    	\dfrac{(\bar v_h)_t}{\bar v_h}=r\big[1-a_1\underline u_h-\bar v_h-\dfrac{d_2}{r}(w^*-\sigma(t))\big],&\quad t>0,\\
    	\dfrac{(\underline v_h)_t}{\underline v_h}=r\big[1-a_1\bar u_h-\underline v_h-\dfrac{d_2}{r}(w^*+\sigma(t))\big],&\quad t>0,
	\end{cases}
	$$
	Straightforward computations show
	\begin{numcases}{}
			\dfrac{\mathrm d}{\mathrm dt}\ln\dfrac{\bar u_h}{\underline u_h}=-(\bar u_h-\underline u_h)+a_2(\bar v_h-\underline v_h)+2d_1\sigma(t),\quad &t>0,\label{ode1}\\
			\dfrac{\mathrm d}{\mathrm dt}\ln\dfrac{\bar v_h}{\underline v_h}=r\big[a_1(\bar u_h-\underline u_h)-(\bar v_h-\underline v_h)\big]+2d_2\sigma(t),\quad &t>0.\label{ode2}
		\end{numcases}
	Introducing the notations
	$$
	\mathcal A_0:=\dfrac{1+a_2}{(1+a_1)r},\quad
	\mathcal A_1:=\dfrac{1-a_1a_2}{1+a_2},\quad
	\mathcal A_2:=2d_1+{2d_2}\mathcal A_0,
	$$
	and adding $(\ref{ode1})$ with $\mathcal A_0\times(\ref{ode2})$, we obtain
	\begin{equation}\label{fracode}
		\dfrac{\mathrm d}{\mathrm dt}\big(\ln\dfrac{\bar u_h}{\underline u_h}+\mathcal A_0\ln\dfrac{\bar v_h}{\underline v_h}
	\big)=-\mathcal A_1\big((\bar u_h-\underline u_h)+(\bar v_h-\underline v_h)\big)+\mathcal A_2\sigma(t).
	\end{equation}
	
	First of all, we prove that there exists a constant $\kappa>0$ such that $\underline u_h\geq\kappa$ and $\underline v_h\geq\kappa$ for all $t>0$.
	By the definition of $\sigma$, there exists $T_1>0$ such that
	$$
	\sigma(t)\leq \dfrac{\mathcal A_1}{8\mathcal A_2\mathcal M}\min\{u^*,v^*\},\quad\forall\, t\geq T_1,
	$$
	where
	$$
	\mathcal M:=\max\left\{\,1,\,\max\{\mathcal A_0,\dfrac{1}{\mathcal A_0}\}\max\{\bar v^h_0,1\}\max\{\dfrac{1}{u^*},\dfrac{1}{v^*}\}
	\right\}.
	$$
	Now, we claim that there exists $T_2\geq T_1$ such that
	\begin{equation}\label{uv_ul_small}
		\big[(\bar u_h-\underline u_h)+(\bar v_h-\underline v_h)\big](T_2)
	<\dfrac{1}{4\mathcal M}\min\{u^*,v^*\}.
	\end{equation}
	Suppose that the claim is not true, i.e. for all $t\geq T_1$,
	$$
	\big[(\bar u_h-\underline u_h)+(\bar v_h-\underline v_h)\big](t)
	\geq \dfrac{1}{4\mathcal M}\min\{u^*,v^*\}.
	$$
	Then we have
	$$
	\begin{aligned}
		&-\mathcal A_1\big((\bar u_h-\underline u_h)+(\bar v_h-\underline v_h)\big)+\mathcal A_2\sigma(t)\\
	\leq& -\dfrac{\mathcal A_1}{4\mathcal M}\min\{u^*,v^*\}+
	\dfrac{\mathcal A_1}{8\mathcal M}\min\{u^*,v^*\}\\
	=& -\dfrac{\mathcal A_1}{8\mathcal M}\min\{u^*,v^*\},
	\end{aligned}
	$$
	 By substituting the above inequality into $(\ref{fracode})$, we obtain that
	$\big(\ln\frac{\bar u_h}{\underline u_h}+\mathcal A_0\ln\frac{\bar v_h}{\underline v_h}\big)$ has constant decay rate for all $t\geq T_1$, which is contradict to the fact that $\big(\ln\frac{\bar u_h}{\underline u_h}+\mathcal A_0\ln\frac{\bar v_h}{\underline v_h}\big)$ is nonnegative due to lemma $\ref{clamp_station}$, and our assertion has been proved.
	
	Once the upper and lower solutions are close enough to each other at $T_2$, namely, $(\ref{uv_ul_small})$ holds, we claim that they will not be far from each other again, more specifically, for all $t\geq T_2$,
	\begin{equation}\label{claim_uv}
		\big[(\bar u_h-\underline u_h)+(\bar v_h-\underline v_h)\big](t)
	\leq \dfrac{1}{3}\min\{u^*,v^*\}.
	\end{equation}
	In fact, if they turn to be relatively far, namely there exists $T_3>T_2$ such that
	\begin{equation}\label{equal}
		\big[(\bar u_h-\underline u_h)+(\bar v_h-\underline v_h)\big](T_3)
	=\dfrac{1}{4\mathcal M}\min\{u^*,v^*\}
	\end{equation}
	and
	$$
	\big[(\bar u_h-\underline u_h)+(\bar v_h-\underline v_h)\big](t)
	<\dfrac{1}{4\mathcal M}\min\{u^*,v^*\},\quad T_2\leq t< T_3.
	$$
	If $T_3=\infty$, then $(\ref{claim_uv})$ naturally holds.
	If $T_3<\infty$, let $T_4>T_3$ denote the maximum time such that for all $T_3<t\leq T_4$, there holds
	$$
	\big[(\bar u_h-\underline u_h)+(\bar v_h-\underline v_h)\big](t)\geq\dfrac{1}{4\mathcal M}\min\{u^*,v^*\}.
	$$
	Then, it follows directly that for all $T_3<t\leq T_4$,
	$$
	\dfrac{\mathrm d}{\mathrm dt}\left (\ln\dfrac{\bar u_h}{\underline u_h}+\mathcal A_0\ln\dfrac{\bar v_h}{\underline v_h}
	\right )(t)
	\leq -\dfrac{\mathcal A_1}{8\mathcal M}\min\{u^*,v^*\}.
	$$
	Hence, for all $T_3<t\leq T_4$, we have
	$$
	\begin{aligned}
		\big(\ln\dfrac{\bar u_h}{\underline u_h}+\mathcal A_0\ln \dfrac{\bar v_h}{\underline v_h}\big)(t)
	\leq& \big(\ln\dfrac{\bar u_h}{\underline u_h}+\mathcal A_0\ln\dfrac{\bar v_h}{\underline v_h}\big)(T_3)
	-\dfrac{\mathcal A_1}{8\mathcal M}\min\{u^*,v^*\}(t-T_3)\\
	\leq& \big(
	\ln\dfrac{\bar u_h}{\underline u_h}+\mathcal A_0\ln\dfrac{\bar v_h}{\underline v_h}\big)(T_3).
	\end{aligned}
	$$
	By the fact that $\frac{b-a}{b}\leq\ln\frac{b}{a}\leq\frac{b-a}{a}$ if $b>a>0$, we obtain that  for all $T_3<t\leq T_4$,
	
	\begin{equation}\label{sumuvode}
		\begin{aligned}
		&(\bar u_h-\underline u_h)+(\bar v_h-\underline v_h)(t)\\
	\leq&\max\{1,\dfrac{1}{\mathcal A_0}\}
	\big(\bar u_h\ln\dfrac{\bar u_h}{\underline u_h}+\mathcal A_0\bar v_h\ln \dfrac{\bar v_h}{\underline v_h}\big)(t)\\
	\leq&\max\{1,\dfrac{1}{\mathcal A_0}\}\max\{\bar v^h_0,1\}
	\big(\ln\dfrac{\bar u_h}{\underline u_h}+\mathcal A_0\ln \dfrac{\bar v_h}{\underline v_h}\big)(t)\\
	\leq&\max\{1,\dfrac{1}{\mathcal A_0}\}\max\{\bar v^h_0,1\}
	\big(\ln\dfrac{\bar u_h}{\underline u_h}+\mathcal A_0\ln \dfrac{\bar v_h}{\underline v_h}\big)(T_3)\\
	\leq&\max\{1,\dfrac{1}{\mathcal A_0}\}\max\{\bar v^h_0,1\}
	\max\{1,\mathcal A_0\}
	\Big [\,\dfrac{1}{\underline u_h}(\bar u_h-\underline u_h)+\dfrac{1}{\underline v_h}(\bar v_h-\underline v_h)\Big ](T_3)\\
	=&\max\{\mathcal A_0,\dfrac{1}{\mathcal A_0}\}\max\{\bar v^h_0,1\}
	\Big [\,\dfrac{1}{\underline u_h}(\bar u_h-\underline u_h)+\dfrac{1}{\underline v_h}(\bar v_h-\underline v_h)\Big ](T_3).
	\end{aligned}
	\end{equation}
	Notice from Lemma $\ref{clamp_station}$, $(\ref{equal})$ and the definition of $\mathcal M$ that
	\begin{equation*}
		\begin{aligned}
			\underline u_h(T_3)\geq u^*-\dfrac{1}{4\mathcal M}\min\{u^*,v^*\}\geq\dfrac{3}{4}u^*,\\
			\underline v_h(T_3)\geq v^*-\dfrac{1}{4\mathcal M}\min\{u^*,v^*\}\geq\dfrac{3}{4}v^*.
		\end{aligned}
	\end{equation*}
	Substituting the inequalities above into $(\ref{sumuvode})$, we have
	\begin{equation}\label{worsebound}
		\begin{aligned}
		&(\bar u_h-\underline u_h)(t)+(\bar v_h-\underline v_h)(t)\\
	    \leq & \dfrac{4}{3}\max\{\mathcal A_0,\dfrac{1}{\mathcal A_0}\}\max\{\bar v^h_0,1\}\max\{\dfrac{1}{u^*},\dfrac{1}{v^*}\}\big((\bar u_h-\underline u_h)+(\bar v_h-\underline v_h)\big)(T_3)\\
	    \leq & \dfrac{1}{3}\min\{u^*,v^*\},\quad T_3 < t \leq T_4.
	    \end{aligned}
	\end{equation}
	From the discussion above, we know that when $(\ref{equal})$ happens, either $(\ref{worsebound})$ holds or $(\bar u_h-\underline u_h)(t)+(\bar v_h-\underline v_h)(t)$ enjoys sharper bound $\frac{1}{4\mathcal M}\min\{u^*,v^*\}$.
	Hence, we conclude that $(\ref{claim_uv})$ holds for all $t\geq T_2$,
	
	 Thanks to $(\ref{claim_uv})$, $\underline u_h$ and $\underline v_h$ have positive lower bound $\frac{2}{3}\min\{u^*,v^*\}$ when $t\geq T_2$. Therefore, there exists a positive constant $\kappa\leq\frac{2}{3}\min\{u^*,v^*\}$ which is the lower bound of $\underline u_h$ and $\underline v_h$ uniformly in $t$, then Lemma $\ref{ode_clamp_solution}$ immediately implies that $\kappa$ is also the lower bound of $u$ and $v$.
	
	Now, we turn back to $(\ref{fracode})$ and prove the following convergence property:
	\begin{equation}\label{odeh_uv_ifi}
		||\bar u_h-\underline u_h||_{L^\infty(\Omega)}
		+||\bar v_h-\underline v_h||_{L^\infty(\Omega)}
		\rightarrow 0
		\quad as\quad t\rightarrow\infty.
	\end{equation}
	Direct computation yields that
	\begin{equation}\label{odedecay}
	\begin{aligned}
		\dfrac{\mathrm d}{\mathrm dt}\big(\ln\dfrac{\bar u_h}{\underline u_h}+\mathcal A_0\ln\dfrac{\bar v_h}{\underline v_h}\big)
	=&-\mathcal A_1\big(\underline u_h\ln\dfrac{\bar u_h}{\underline u_h}+\underline v_h\ln\dfrac{\bar v_h}{\underline v_h}\big)+\mathcal A_2\sigma(t)\\
	\leq &-2\mathcal A_3\big(\ln\dfrac{\bar u_h}{\underline u_h}+\mathcal A_0\ln\dfrac{\bar v_h}{\underline v_h}\big)+\mathcal A_2\sigma(t),
	\end{aligned}
	\end{equation}
	where $ \mathcal A_3:=\frac{1}{2}\kappa \mathcal A_1\min \{1,\frac{1}{\mathcal A_0}\}$. By comparison principle of ODE, we obtian
	$$
	\begin{aligned}
		&\big(\ln\dfrac{\bar u_h}{\underline u_h}+\mathcal A_0\ln\dfrac{\bar v_h}{\underline v_h}\big)(t)\\
	\leq & e^{-2\mathcal A_3 t}\Big\{\int_{0}^t
	\mathcal A_2\sigma(s)e^{2\mathcal A_3 s}\mathrm ds+\big(\ln\dfrac{\bar u_h}{\underline u_h}+\mathcal A_0\ln\dfrac{\bar v_h}{\underline v_h}\big)(0)
	\Big\}.
	\end{aligned}
	$$
	Thanks to L' H\^{o}pital's rule,
	$$
	\lim\limits_{t\rightarrow+\infty}
	\big(\ln\dfrac{\bar u_h}{\underline u_h}+\mathcal A_0\ln\dfrac{\bar v_h}{\underline v_h}\big)(t)
	\leq \lim\limits_{t\rightarrow+\infty}\dfrac{\mathcal A_2\sigma(t)e^{2\mathcal A_3 t}}{e^{2\mathcal A_3 t}}=0,
	$$
	and we can achieve $(\ref{odeh_uv_ifi})$ after using the inequality
	$$
	\ln\dfrac{\bar u_h}{\underline u_h}\geq\dfrac{\bar u_h-\underline u_h}{\bar u_h},\quad
	\ln\dfrac{\bar v_h}{\underline v_h}\geq\dfrac{\bar v_h-\underline v_h}{\bar v_h}.
	$$
	Combining  $(\ref{odeh_uv_ifi})$ with Lemma $\ref{clamp_station}$ and Lemma $\ref{ode_clamp_solution}$, our proof is completed.
\end{proof}

At the end, we accomplish the proof of Theorem $\ref{thm1}$.
\begin{proof}[Proof of Theorem $\ref{thm1}$]
 	First, we prove that there exists a constant $\mathcal C_0>0$ such that $E_h(t)$ and $F_h(t)$, which are defined in $(\ref{eh})$ and $(\ref{fh})$, satisfy $E_h(t)\leq \mathcal C_0 F_h(t)$. To this end, we still define function $\mathcal I(\mathfrak u):=\mathfrak u-u^*\ln \mathfrak u$ for $\mathfrak u>0$. According to L' H\^{o}pital's rule, we have
 	$$
 	\lim\limits_{\mathfrak u\rightarrow u^*}\dfrac{\mathcal I(\mathfrak u)-\mathcal I(u^*)}{\mathfrak (u-u^*)^2}=
 	\lim\limits_{\mathfrak u\rightarrow u^*}\dfrac{\mathcal I'(\mathfrak u)}{2(\mathfrak u-u^*)}
 	=\dfrac{1}{2u^*}.
 	$$
 	Thanks to Lemma $\ref{ode_decay_h}$, there exists $\mathcal T_1$ large enough such that for all $t\geq \mathcal T_1$, we have
 	\begin{equation}\label{ah_upperbound}
 		\begin{aligned}
 			&A_h(t)
 			=\int_\Omega u(x,t)-u^*-u^*\ln\dfrac{u(x,t)}{u^*}\, \mathrm dx\\
 			= & \int_\Omega \mathcal I(u(x,t))-\mathcal I(u^*)\,\mathrm dx
 			\leq\dfrac{1}{u^*}\int_\Omega \big(u(x,t)-u^*\big)^2\mathrm dx,
 		\end{aligned}
 	\end{equation}
 	as well as
 	\begin{equation}\label{ah_lowerbound}
 		A_h(t)
 		\geq\dfrac{1}{4u^*}\int_\Omega \big(u(x,t)-u^*\big)^2\mathrm dx.
 	\end{equation}
 	In the same way, enlarge $\mathcal T_1$ if necessary, for all $t\geq \mathcal T_1$ we have
 	\begin{equation}\label{bh_bound}
 		\dfrac{1}{4v^*}\int_\Omega (v-v^*)^2\,\mathrm dx
 		\leq B_h(t)\leq \dfrac{1}{v^*}\int_\Omega (v-v^*)^2\,\mathrm dx.
 	\end{equation}
 	Combining $(\ref{eh})$, $(\ref{ah_upperbound})$ and $(\ref{bh_bound})$ together, we obtain that there exists a constant $\mathcal C_0>0$ such that $E_h(t)\leq\mathcal C_0 F_h(t)$.
 	
 	Now, substituting the above inequality into $(\ref{decay1})$ yields
 	$$
 	\dfrac{\mathrm d }{\mathrm dt}E_h(t)
 	\leq -\varepsilon_h F_h(t)
 	\leq -\varepsilon_h\mathcal C_0 E_h(t),
 	 \quad t\geq \mathcal T_1.
 	$$
 	Hence, without loss of generality, there exists constants $\mathcal C_1>0$ and $\kappa_1$>0 such that
 	$$
 	E_h(t)\leq\mathcal C_1e^{-\kappa_1 t},
 	\quad t> 0.
 	$$
 	To obtain the exponential decay rate, we substitute $(\ref{ah_lowerbound})$ and the left inequality of $(\ref{bh_bound})$ into the inequality above and it follows that there exists a constant $\mathcal C_2>0$ such that
 	$$
 		||u-u^*||^2_{L^2(\Omega)}
 		+||v-v^*||^2_{L^2(\Omega)}
 		+||w-w^*||^2_{L^2(\Omega)}
 	\leq \mathcal C_2 e^{-\kappa_1 t},
 	\quad t> 0.
 	$$
 	Notice that for all $\xi\in L^{\infty}(\Omega)$, we have
 	$$
 	||\xi||_{L^{2n}(\Omega)}\leq ||\xi||^{n-1/n}_{L^{\infty}(\Omega)}||\xi||^{1/n}_{L^2(\Omega)}.
 	$$
 	By combining the two inequalities above together, we derive that there exists a constant $\mathcal C_3>0$ such that
 	\begin{equation}\label{uvw_2n}
 		||u-u^*||_{L^{2n}(\Omega)}+||v-v^*||_{L^{2n}(\Omega)}+||w-w^*||_{L^{2n}(\Omega)}
 		\leq\mathcal C_3e^{-(\kappa_1/(2n))t},
 		 \quad t> 0.
 	\end{equation}
 	
 	Now, we are ready to improve $(\ref{uvw_2n})$ to $L^\infty$ type convergence. Using  the variation-of-constants formula to the third equation of $(\ref{model})$, for each $t>2$, we can estimate $w-w^*$:
 	\begin{equation}\label{wi1i2}
 		\begin{aligned}
 			&||w(\cdot,t)-w^*||_{L^\infty(\Omega)}\\
 			\leq & ||e^{\Delta}(w(\cdot,t-1)-w^*)||_{L^\infty(\Omega)}
 			+\int_{t-1}^t
 			||e^{(t-s)\Delta}c(v(\cdot,s)-w(\cdot,s))||_{L^\infty(\Omega)}\, \mathrm d s \\
 			:=&\, \mathcal I_1+\mathcal I_2.
 		\end{aligned}
 	\end{equation}
 	Standard $L^p$-$L^q$ estimate of heat semigroup and $(\ref{uvw_2n})$ yield the existence of $\mathcal C_4>0$ such that
 	$$
 	\mathcal I_1
 		\leq \mathcal C_4(t-(t-1))^{-1/4}||w(\cdot,t)-w^*||_{L^{2n}(\Omega)}
 		\leq \mathcal C_3\mathcal C_4 e^{-(\kappa_1/(2n))t}
 	$$
 	as well as
 	$$
 	\begin{aligned}
 			\mathcal I_2
 			\leq & \int_{t-1}^t
 			\left \|e^{(t-s)\Delta}c\big((v(\cdot,s)-v^*)-(w(\cdot,s)-w^*)\big )\right \|_{L^\infty(\Omega)}\, \mathrm d s\\
 			\leq & c\,\mathcal C_4\int_{t-1}^t
 			(t-s)^{-1/4}
 			\left[\,\left \|v(\cdot,s)-v^*\right \|_{L^{2n}(\Omega)}
 			+\left \|w(\cdot,s)-w(\cdot,s))\right \|_{L^{2n}(\Omega)}\right]
 			 \mathrm d s\\
 			 \leq & c\,\mathcal C_3 \mathcal C_4 e^{-(\kappa_1/(2n))(t-1)}.
 		\end{aligned}
 	$$
 	Substituting the two inequalities above into $(\ref{wi1i2})$, we finally obtain that there exists a constant $\mathcal C_5>0$ such that
 	\begin{equation}\label{w_infi}
 		||w(\cdot,t)-w^*||_{L^\infty(\Omega)}
 	\leq \mathcal C_5 e^{-(\kappa_1/(2n))t}.
 	\end{equation}
 	
 	Now, we turn back to $(\ref{odedecay})$ and give the decay rate of $\|u-u^*\|_{L^\infty}$ and $\|v-v^*\|_{L^\infty}$. Thanks to $(\ref{w_infi})$, we could set $\sigma(t)=\mathcal C_5 e^{-(\kappa_1/(2n))t}$. Multiplying $(\ref{odedecay})$ with $e^{\mathcal A_4 t}$, where $\mathcal A_4=\min\{\frac{\kappa_1}{4n},\,\mathcal A_3\}$, for all $t>0$, we have
 	$$
	\dfrac{\mathrm d}{\mathrm dt}\Big[e^{\mathcal A_4 t}\big(\ln\dfrac{\bar u_h}{\underline u_h}+\mathcal A_0\ln\dfrac{\bar v_h}{\underline v_h}\big)\Big]
	\leq -\mathcal A_4\Big[e^{\mathcal A_4t}\big(\ln\dfrac{\bar u_h}{\underline u_h}+\mathcal A_0\ln\dfrac{\bar v_h}{\underline v_h}\big)\Big]
	+\mathcal C_5 \mathcal A_2 e^{-\mathcal A_4 t}.
	$$
 	Using comparison principle of ODE, we obtain
	$$
	\begin{aligned}
		&\Big[e^{\mathcal A_4t}\big(\ln\dfrac{\bar u_h}{\underline u_h}+\mathcal A_0\ln\dfrac{\bar v_h}{\underline v_h}\big)\Big](t)\\
	\leq & e^{-\mathcal A_4 t}\Big\{\int_0^t
	\mathcal A_2 \mathcal A_4 \mathcal C_5 \mathrm ds+\big(\ln\dfrac{\bar u_h}{\underline u_h}+\mathcal A_0\ln\dfrac{\bar v_h}{\underline v_h}\big)(0)
	\Big\},
	\end{aligned}
	$$
	which immediately implies that
	$$
	\big(\ln\dfrac{\bar u_h}{\underline u_h}+\mathcal A_0\ln\dfrac{\bar v_h}{\underline v_h}\big)(t)\rightarrow 0 \quad \text{exponentially  as} \quad t\rightarrow\infty.
	$$
	
	Using Lemma $\ref{clamp_station}$ and Lemma $\ref{ode_clamp_solution}$ as well as the lower bound of $\underline u_h$ and ${\underline v_h}$ obtained in the proof of Lemma $\ref{ode_clamp_solution}$, we obtain
	$$
	||u(\cdot,t)-u^*||_{L^\infty(\Omega)}
	+||v(\cdot,t)-v^*||_{L^\infty(\Omega)}
	\rightarrow 0 \quad \text{exponentially  as} \quad t\rightarrow\infty.
	$$	
	The proof is accomplished.
 \end{proof}

\section{The homogeneous tumor state}
In this section, we present  the proof of Theorem $\ref{thm2}$, which is about the global convergence of the  homogeneous tumor state
     $$(0,\tilde v, \tilde w) = \left( 0,\left(1+\dfrac{d_2}{r}\right)^{-1},\left(1+\dfrac{d_2}{r}\right)^{-1} \right).
     $$
The main approach of the proof is similar to that of Theorem \ref{thm1}. To avoid being redundant, we only present the details when the arguments are crucial and different.

\medskip

First of all, to prove the $L^\infty$ convergence of $w$ to $\tilde w$, the key step is the construction of  a  proper Lyapunov functional  in the following lemma, which is adjusted according to the homogeneous tumor state $(0,\tilde v, \tilde w) $  on the basis of the Lyapunov functional defined in Lemma \ref{uvwh}.

    \begin{lemma}\label{uvwc}
    	Suppose that assumptions of Theorem $\ref{thm2}$ holds, $(u,v,w)$ is the global solution of $(\ref{model})$-$(\ref{ini})$,
    	$$
    	A_c(t)=\int_\Omega u(x,t)\,\mathrm dx,
    	$$
    	
    	$$
    	B_c(t)=\int_\Omega v(x,t)-\tilde v-\tilde v\ln\dfrac{v(x,t)}{\tilde v}\,\mathrm dx,
    	$$
    	
    	$$
    	C_c(t)=\dfrac{1}{2}\int_\Omega(w(x,t)-\tilde  w)^2\,\mathrm dx.
    	$$
    	Then there exists $\beta_c>0$, $\eta_c>0$ and $\varepsilon_c>0 $ such that the functions $E_c(t)$ and $F_c(t)$ defined by
    	\begin{equation}\label{ec}
    		E_c(t)=A_c(t)+\dfrac{\beta_c}{r}B_c(t)+\dfrac{\eta_c}{c}C_c(t), \quad t>0,
    	\end{equation}
    	and
    	\begin{align}
    		F_c(t)=&
    		\int_\Omega u(x,t)^2\,\mathrm dx
    		+\int_\Omega(v(x,t)-\tilde v)^2\,\mathrm dx
    		+\int_\Omega(w(x,t)-\tilde w)^2\,\mathrm dx\nonumber\\
    		&+\int_\Omega \left|\nabla w(x,t)\right|^2\,\mathrm dx, \quad t>0,
    	\end{align}
    	satisfy
    	$$
    	E_c(t)\geq 0, \quad  t> 0,
    	$$
    	as well as
    	\begin{equation}\label{decay1c}
    		\dfrac{\mathrm d}{{\mathrm dt}}E_c(t)\leq-\varepsilon_c F_c(t).
    	\end{equation}
    \end{lemma}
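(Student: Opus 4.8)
The plan is to run the argument of Lemma~\ref{uvwh} almost verbatim, the only genuinely new phenomenon being the degeneracy $u^\ast=0$. First I would check $E_c\ge0$: since $0<u<1$ by Lemma~\ref{local_existence} we have $A_c=\int_\Omega u\ge0$; $B_c\ge0$ follows from the same Taylor/convexity estimate as in Lemma~\ref{uvwh} applied to $\mathcal I(\mathfrak v)=\mathfrak v-\tilde v\ln\mathfrak v$; and $C_c\ge0$ is trivial. Differentiating, the $v$- and $w$-parts are structurally identical to the heterogeneous case (now using the tumour-state identity $r(1-\tilde v)=d_2\tilde w$ and producing a harmless term $-\frac{D\beta_c\tilde v}{r}\int_\Omega(1-u)|\nabla v/v|^2$ that I discard). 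The new feature sits in $\frac{\mathrm d}{\mathrm dt}A_c=\int_\Omega u(1-u-a_2v-d_1w)$: because $u^\ast=0$ no longer annihilates the reaction constant, expanding about $(\tilde v,\tilde w)$ leaves a \emph{linear} remainder,
$$\frac{\mathrm d}{\mathrm dt}A_c=\lambda\int_\Omega u\,\mathrm dx-\int_\Omega u^2\,\mathrm dx-a_2\int_\Omega u(v-\tilde v)\,\mathrm dx-d_1\int_\Omega u(w-\tilde w)\,\mathrm dx,\qquad \lambda:=1-(a_2+d_1)\tilde v.$$

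Assembling the three derivatives gives $\frac{\mathrm d}{\mathrm dt}E_c=\lambda\int_\Omega u-\int_\Omega\mathbf X^{\mathrm T}\mathbb P_c\mathbf X-(\text{nonnegative gradient terms})$ with $\mathbf X=(u,v-\tilde v,w-\tilde w)^{\mathrm T}$ and $\mathbb P_c$ of exactly the shape $(\ref{P_h-matrix})$. The crucial idea is to eliminate the stray linear term instead of the quadratic form. Every one of the hypotheses (i)--(iv) forces $d_2/r<a_2+d_1-1$ (as local stability requires anyway), which is precisely $\lambda<0$; combined with $0<u<1$, hence $u^2\le u$, this yields $\lambda\int_\Omega u\le\lambda\int_\Omega u^2$. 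Absorbing this bound into the quadratic form merely replaces the $(1,1)$-entry of $\mathbb P_c$ by $1-\lambda=(a_2+d_1)\tilde v=\dfrac{a_2+d_1}{1+d_2/r}=:p$, giving a modified matrix $\tilde{\mathbb P}_c$ and the clean estimate $\frac{\mathrm d}{\mathrm dt}E_c\le-\int_\Omega\mathbf X^{\mathrm T}\tilde{\mathbb P}_c\mathbf X-\frac{\eta_c}{c}\int_\Omega|\nabla w|^2$. It then suffices to make $\tilde{\mathbb P}_c$ positive definite for some $\beta_c,\eta_c>0$.

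The positive-definiteness step mirrors Lemma~\ref{uvwh}. The second principal minor is $\tfrac14\tilde\Psi_c(\beta_c)$ with $\tilde\Psi_c(\beta)=-a_1^2\beta^2+(4p-2a_1a_2)\beta-a_2^2$, positive on a nonempty interval iff $p>a_1a_2$; and eliminating $\eta_c$ (the discriminant in $\eta_c$ factors, just as $\Delta_h=\Phi_h\Psi_h$) reduces $\det\tilde{\mathbb P}_c>0$ to $\tilde\Phi_c(\beta_c)>0$, where the identity $p(1+d_2/r)=a_2+d_1$ collapses the determinant condition to $\tilde\Phi_c(\beta)=-a_1^2\beta^2+2(2-a_1)(a_2+d_1)\beta-(a_2+d_1)^2$, notably independent of $d_2$. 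Writing the two root intervals $S_1^c,S_2^c$, one overlap inequality is automatic (since $p\le a_2+d_1$) and the binding one is
$$\sqrt{a_2+d_1}\,\bigl(1-\sqrt{1-a_1}\,\bigr)<\sqrt{p}+\sqrt{p-a_1a_2}.$$
Its right-hand side increases in $p$ (decreases in $d_2$), so I would read off the thresholds from the boundary values: equality at $p=a_1a_2$ is $d_1=d_1^c$ as in $(\ref{d^c_1})$, equality at $p=1$ is $d_1=d_1^h$ as in $(\ref{d1h})$, and solving for the critical $p$ in general returns exactly $d_2/r=d_2^c$ as in $(\ref{d^c_2})$. Cross-checking these against the requirement $p>\max\{a_1a_2,1\}$ (needed both for $S_2^c\ne\varnothing$ and for $\lambda<0$) reproduces cases (i)--(iv) verbatim, and positive definiteness finally delivers $\mathbf X^{\mathrm T}\tilde{\mathbb P}_c\mathbf X\ge\varepsilon_1|\mathbf X|^2$, whence $\frac{\mathrm d}{\mathrm dt}E_c\le-\varepsilon_c F_c$ with $\varepsilon_c=\min\{\varepsilon_1,\eta_c/c\}$.

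I expect the main obstacle to be conceptual rather than computational: recognising that the linear term is an \emph{asset}. The inequality $u^2\le u$ turns $\lambda\int_\Omega u$ into an enhancement of the $(1,1)$-entry, and without this enhancement the bare matrix $\mathbb P_c$ can never be positive definite once $a_1a_2\ge1$ (its second minor, being $\tfrac14\Psi_h$, is then $\le0$), so cases (i) with $a_1a_2\ge1$ and (iv) would be unreachable by the heterogeneous-state method. The remaining delicacy is purely the bookkeeping of the overlap across the four regimes — verifying in each that the prescribed $d_2$-range secures $p>\max\{a_1a_2,1\}$ and that $d_2^c>0$ (which rests on the tidy identity $\rho(2-\rho)=a_1$ for $\rho=1-\sqrt{1-a_1}$), and confirming that the thresholds produced by the boundary computation are literally $(\ref{d^c_1})$, $(\ref{d1h})$ and $(\ref{d^c_2})$.
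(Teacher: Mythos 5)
Your proposal is correct and follows essentially the same route as the paper's proof: the paper likewise exploits $u^2\le u$ together with $\frac{d_2}{r}<a_2+d_1-1$ to promote the linear term $-(\delta-1)\int_\Omega u$ (your $\lambda\int_\Omega u$, with $\delta=1-\lambda=p$) into an enlarged $(1,1)$-entry $\delta$ of the quadratic-form matrix $\mathbb P_c$, and then runs the identical principal-minor/discriminant factorization $\Delta_c=\Phi_c\Psi_c$ with your $\tilde\Phi_c,\tilde\Psi_c$ matching the paper's $\Phi_c,\Psi_c$ exactly. The threshold computations ($d_1^c$ at $\delta=a_1a_2$, $d_1^h$ at $\delta=1$, and $d_2^c$ from the overlap inequality) and the reduction of cases (i)--(iv) to $\delta>\max\{a_1a_2,1\}$ plus the overlap condition coincide with the paper's case analysis.
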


    The idea of the proof of Lemma \ref{uvwc} is similar to that of Lemma $\ref{uvwh}$. We still provide the details of its proof since  it is Lemma  \ref{uvwc}  that requires the conditions imposed on the parameters in Theorem \ref{thm2}   and the computations vary due to the change of  the Lyapunov functional and the steady state.

    \begin{proof}[Proof of Lemma \ref{uvwc}]
    	Similar to Lemma $\ref{uvwh}$, $A_c(t)$, $B_c(t)$ and $C_c(t)$ are nonnegative. For convenience, we denote
    	$$\delta:=(a_2+d_1)\left({\dfrac{d_2}{r}+1}\right)^{-1}.$$
    	We assume for now that we have
    	\begin{equation}\label{delta>1}
    		\delta>1
    	\end{equation}
    	and
    	\begin{equation}\label{a1a2delta}
    		a_1a_2<\delta.
    	\end{equation}
    	In the later part of this proof, we will show that $(\ref{delta>1})$ and $(\ref{a1a2delta})$ are actually contained in the cases $(\ref{thm21})$-$(\ref{thm24})$.
    	In fact, $(\ref{delta>1})$, namely $\frac{d_2}{r}<a_2+d_1-1$ comes from the linear stability of homogeneous tumor state. $(\ref{delta>1})$ is already contained in the cases $(\ref{thm21})$ and $(\ref{thm22})$, and we will verify $(\ref{delta>1})$ in the cases $(\ref{thm23})$ and $(\ref{thm24})$ in the comparison between $d_2^c$ and $a_2+d_1-1$ in the following part of this proof.
    	On the other hand, as for property $(\ref{a1a2delta})$, in the case $(\ref{thm21})$, $(\ref{a1a2delta})$ is contained in its last inequality. In the case $(\ref{thm22}) $ and case $(\ref{thm23})$, thanks to $a_1a_2<1$ and $(\ref{delta>1})$, $(\ref{a1a2delta})$ automatically holds. We will verify that we have $(\ref{a1a2delta})$ in the case $(\ref{thm24})$ in later part.
    	
    	Due to the fact that $u\leq 1$, we have
    	\begin{equation}\label{dac}
    		\begin{aligned}
    			\dfrac{\mathrm d}{\mathrm dt}A_c(t)=&\int_\Omega u\left(1-u-a_2 v-d_1w\right)\mathrm dx\\
    	         =&-(\delta-1)\int_\Omega u\,\mathrm dx
    	         -\int_\Omega u^2\,\mathrm dx
    	         -a_2\int_\Omega u(v-\tilde v)\mathrm dx
    	         -d_1\int_\Omega u(w-\tilde w)\mathrm dx\\
    	         \leq& -\delta\int_\Omega u^2\,\mathrm dx
    	         -a_2\int_\Omega u(v-\tilde v)\mathrm dx
    	         -d_1\int_\Omega u(w-\tilde w)\mathrm dx,
    		\end{aligned}
    	\end{equation}
    	Similarly,
    		\begin{eqnarray}\label{dbc}
    			\dfrac{1}{r}\dfrac{\mathrm d}{\mathrm dt}B_c(t)
    			=&&\!\!\!\!\!\!\!\!
    			 \dfrac{1}{r} \int_\Omega\dfrac{v-\tilde v}{v}
    			\left[ D \nabla \cdot((1-u) \nabla v)
    			+r v\left(1-v-a_1 u\right)-d_2w\right]\mathrm dx\nonumber\\
    	         =&&\!\!\!\!\!\!\!\!
    	         -\dfrac{D\tilde v}{r}\int_\Omega(1-u)\left|\dfrac{\nabla v}{v} \right|^2\mathrm dx
    	         +\int_\Omega(v-\tilde v)
    	         \left[ -a_1u+(\tilde v-v)+\dfrac{d_2}{r}(\tilde w-w) \right]\mathrm dx\nonumber\\
    	         =&&\!\!\!\!\!\!\!\!
    	         -a_1\int_\Omega u(v-\tilde v)\mathrm dx
    	         -\int_\Omega(v-\tilde v)^2\mathrm dx
    	         -\dfrac{d_2}{r}\int_\Omega(v-\tilde v)(w-\tilde w)\mathrm dx\nonumber\\
    	         &&\!\!\!\!\!\!\!\!
    	         -\dfrac{D\tilde v}{r}\int_\Omega(1-u)\left|\dfrac{\nabla v}{v} \right|^2\mathrm dx,
    		\end{eqnarray}
    	\begin{equation}\label{dcc}
    		\begin{aligned}
    			\dfrac{1}{c}\dfrac{\mathrm d}{\mathrm dt}C_c(t)=&\dfrac{1}{c}\int_\Omega(w-\tilde w)\left[\Delta w+c(v-w) \right]\mathrm dx\\
    			=&-\dfrac{1}{c}\int_\Omega\left| \nabla w \right|^2\mathrm dx
    			+\int_\Omega(w-\tilde w)[(v-\tilde v)+(\tilde w-w)]\mathrm dx\\
    			=&-\dfrac{1}{c}\int_\Omega\left| \nabla w \right|^2\mathrm dx
    			+\int_\Omega(v-\tilde v)(w-\tilde w)-\int_\Omega(w-\tilde w)^2\mathrm dx.
    		\end{aligned}
    	\end{equation}
    	
    	By differentiating $(\ref{ec})$ and substituting $(\ref{dac})$-$(\ref{dcc})$ into it, we obtain
    		\begin{eqnarray}\label{ec_differentiate}
    			\dfrac{\mathrm d}{\mathrm dt}E_c(t)=&&\!\!\!\!\!\!\!\!
    			-\delta\int_\Omega u^2\mathrm dx
    			-\beta_c\int_\Omega(v-\tilde v)^2\mathrm dx
    			-\eta_c\int_\Omega(w-\tilde w)^2\mathrm dx\nonumber\\
    			&&\!\!\!\!\!\!\!\!
    			-(a_2+a_1\beta_c)\int_\Omega u(v-\tilde v)\mathrm dx
    			-d_1\int_\Omega u(w-\tilde w)\mathrm dx
    			-(\dfrac{d_2}{r}\beta_c-\eta_c)\int_\Omega(v-\tilde v)(w-\tilde w)\mathrm dx\nonumber\\
    			&&\!\!\!\!\!\!\!\!
    			-\dfrac{D\beta_c \tilde v}{r}\int_\Omega(1-u)\left|\dfrac{\nabla v}{v} \right|^2\mathrm dx
    			-\dfrac{\eta_c}{c}\int_\Omega\left| \nabla w \right|^2\mathrm dx\nonumber\\
    			\leq &&\!\!\!\!\!\!\!\!
    			-\int_\Omega\mathbf Y^{\mathrm T}\mathbb P_c \mathbf Y\,\mathrm dx
    			-\dfrac{\eta_c}{c}\int_\Omega\left| \nabla w \right|^2\mathrm dx,
    		\end{eqnarray}
    	where
    	$$
    	\mathbb P_c=
    		\begin{pmatrix}\displaystyle
    			\delta & \dfrac{a_2+a_1\beta_c}{2} & \dfrac{d_1}{2}\\
    			\dfrac{a_2+a_1\beta_c}{2} & \beta_c & \dfrac{\dfrac{d_2}{r}\beta_c-
    			\eta_c}{2}\\
    			\dfrac{d_1}{2} & \dfrac{\dfrac{d_2}{r}\beta_c-
    			\eta_c}{2} & \eta_c
    		\end{pmatrix}
    	$$
    	and
    	$$
    	\mathbf Y=\left(u,v-\tilde v,w-\tilde w\right)^\mathbf T.
    	$$
    	
    	To verify $(\ref{decay1c})$, we need to show that there exist positive constants $\beta_c$, $\eta_c$ such that $\mathbb P_c$ is positive definite.
       We declare that the above property holds if and only if there exists $\beta_c>0$ satisfying the two following inequalities simultaneously:
    	\begin{numcases}{}
    		\Phi_c(\beta_c):=-a_1^2\beta_c^2+
    		2(2-a_1)(a_2+d_1)\beta_c-(a_2+d_1)^2>0,\label{group1c}\\
    		\Psi_c(\beta_c):=
    		-a_1^2\beta_c^2+2(2\delta -a_1a_2)\beta_c-a_2^2>0\label{group2c},
    		\end{numcases}    	
    	For simplicity, we still denote $\alpha:=\tfrac{1}{2}(a_2+a_1\beta_c)$.
    	Similar to the approach we used in Lemma $\ref{uvwh}$, we consider every principal minor of $\mathbb P_c$:
    	$$
    	\mathbf {M^c_1}:=1,
    	$$
    	$$
    	\mathbf{M^c_2}:=
    	\begin{vmatrix}
    		\delta & \alpha\\
    		\alpha & \beta_c
    	\end{vmatrix}
    	=\delta\beta_c-\alpha^2
    	=\dfrac{1}{4}\left(
    	-a_1^2\beta_c^2+2(2\delta -a_1a_2)\beta_c-a_2^2\,
    	\right)=\dfrac{1}{4}\Psi_c(\beta_c),
    	$$
    hence, $(\ref{group2c})$ holds if and only if $\mathbf{M^c_2}>0$.

    Now, we consider the determinant of $\mathbb P_c$ and find out the relationship between $\det{\mathbb P_c}>0$ and $(\ref{group1c})$-$(\ref{group2c})$.
    	\begin{align}
    	    \!\!\!\!\!\!\!\!\det{\mathbb P_c}&=
    		\begin{vmatrix}\displaystyle
    			\delta & \alpha & \dfrac{d_1}{2}\\
    			\alpha & \beta_c & \dfrac{\dfrac{d_2}{r}\beta_c-
    			\eta_c}{2}\\
    			\dfrac{d_1}{2} & \dfrac{\dfrac{d_2}{r}\beta_c-\eta_c}{2} & \eta_c
    		\end{vmatrix}\nonumber\\
    		&=\delta\begin{vmatrix}
    			\beta_c & \dfrac{\dfrac{d_2}{r}\beta_c-\eta_c}{2}\\
    			\dfrac{\dfrac{d_2}{r}\beta_c-\eta_c}{2} & \eta_c
    		\end{vmatrix}
    		-\alpha\begin{vmatrix}
    			\alpha & \dfrac{\dfrac{d_2}{r}\beta_c-\eta_c}{2}\\
    			\dfrac{d_1}{2} & \eta_c
    		\end{vmatrix}
    		+\dfrac{d_1}{2}
    		\begin{vmatrix}
    			\alpha & \beta_c\\
    			\dfrac{d_1}{2} & \dfrac{\dfrac{d_2}{r}\beta_c-\eta_c}{2}
    		\end{vmatrix}\nonumber\\
    		&=\!\dfrac{1}{4}\!\left[
    		-\delta\eta_c^2
    		\!+\!2\Big(2(\delta\beta_c\!-\!\alpha^2)\!+\!\Big(\dfrac{d_2}{r}\delta\beta_c\!-\!\alpha d_1\Big)\!\Big)\eta_c
    		\!+\!\!\Big(2\alpha d_1\dfrac{d_2}{r}\beta_c\!-\!\delta \Big(\dfrac{d_2}{r}\beta_c\Big)^2\!\!-\!d_1^2\beta_c\!\Big)
    		\right]\!.\!\label{quadratic_c}
    		\end{align}
    	Notice from $(\ref{group2c})$ that
    	$$
    	2\alpha d_1\dfrac{d_2}{r}\beta_c-\delta \Big(\dfrac{d_2}{r}\beta_c\Big)^2-d_1^2\beta_c
    	<-\beta_c\Big(\dfrac{d_2}{r}\alpha-d_1\Big)^2 \leq 0,
    	$$
    	fundamental properties of quadratic polynomials implies that there exists $\eta_c>0$ such that $\det{\mathbb P_c}>0$ if and only if the following situation holds:
    	\begin{equation}\label{stronger_c}
    		\begin{cases}
    			{\Delta_c}>0,\\
    			2(\delta\beta_c-\alpha^2)+\Big(\dfrac{d_2}{r}\delta\beta_c-\alpha d_1\Big)\geq 0,
    		\end{cases}
    	\end{equation}
    	where ${\Delta_c}$ is the discriminant of the quadratic $(\ref{quadratic_c})$. By calculating this discriminant and substituting $\alpha=\tfrac{1}{2}(a_2+a_1\beta_c)$ into it, we find
    	\begin{equation*}
    	\begin{aligned}
    		{\Delta_c}=&4\left\{
    		\left[
    		(2(\delta\beta_c-\alpha^2)+\Big(\delta \dfrac{d_2}{r}\beta_c-\alpha d_1\Big)
    		\right]^2+
    		\delta\Big (2\alpha d_1\dfrac{d_2}{r}\beta_c-\Big(\delta \dfrac{d_2}{r}\beta_c\Big)^2-d_1^2\beta_c\Big )
    		\right\}\\
    		=&16
    		\left[\,\delta \Big (1+\dfrac{d_2}{r}\Big )\beta_c-\alpha^2-\alpha d_1-\dfrac{1}{4}d_1^2\,\right ](\delta\beta_c-\alpha^2)\\
    		=&\Big\{ -a_1^2\beta_c^2+
    		2(2-a_1)(a_2+d_1)\beta_c-(a_2+d_1)^2\Big\}\times
    		\Big\{ -a_1^2\beta_c^2+2(2\delta -a_1a_2)\beta_c-a_2^2 \Big\}\\
    		=& \Phi_c(\beta_c)\Psi_c(\beta_c).
    		\end{aligned}
    	\end{equation*}
    	Since we already have $(\ref{group2c})$, it follows from the equations above that ${\Delta_c}>0$ if and only if $(\ref{group1c})$ holds.
    	Also, when $\Delta_c>0$ and $\Psi_c(\beta_c)>0$,
    	$$
    	2(\delta\beta_c-\alpha^2)+\Big(\dfrac{d_2}{r}\delta\beta_c-\alpha d_1\Big)>\delta\Big(1+\frac{d_2}{r}\Big)\beta_c-\alpha^2 -\alpha d_1>\dfrac{1}{4}\Phi_c(\beta_c)>0,
    	$$
    	which implies the second equation in $(\ref{stronger_c})$ is automatically satisfied.
    	Therefore, on the basis of $(\ref{group2c})$, there exist $\beta_c>0$ and $\eta_c>0$ such that $\det{\mathbb P_c}>0$ if and only if $(\ref{group1c})$ holds. Summing up the discussion above, our assertion has been proved.
    	
    	 Now, it remains to show that under the assumptions of Theorem $\ref{thm2}$, there exists  $\beta_c>0$ which satisfies $(\ref{group1c})$ and $(\ref{group2c})$ simultaneously. For convenience, we denote the positive solution of $(\ref{group1c})$ as $S^c_1:=\left((L^c_1)^2,\, (R^c_1)^2\right)$ and positive solution of $(\ref{group2c})$ as $S^c_2:=\left( (L^c_2)^2,\,(R^c_2)^2\right)$.
    	 Thanks to $a_1<1$ and $(\ref{a1a2delta})$, $S^c_1$ and $S^c_2$ are not empty.
    	 Direct calculations show that
    	$$
    	\begin{aligned}
    	&L^c_1=\dfrac{1-\sqrt{1-a_1}}{a_1}\sqrt{a_2+d_1} ,\quad
    	R^c_1=\dfrac{1+\sqrt{1-a_1}}{a_1}\sqrt{a_2+d_1},\\
    	&L^c_2=\dfrac{1}{a_1}(\sqrt{\delta}-\sqrt{\delta-a_1a_2}) ,\quad \,\,
    	R^c_2=\dfrac{1}{a_1}(\sqrt{\delta}+\sqrt{\delta-a_1a_2}).
    	\end{aligned}
    	$$
    	By the fact that
    	$$
    	 R^c_1 > \frac{\sqrt{a_2+d_1}}{a_1} > \frac{\sqrt{\delta}}{a_1} > L^c_2,
    	$$
    	$S^c_1$ and $S^c_2$ have overlap part if and only if $L^c_1 < R^c_2$, namely
    	\begin{equation}\label{inequality_c}
    		(1-\sqrt{1-a_1})\sqrt{a_2+d_1}
    		< \sqrt{\delta}+\sqrt{\delta-a_1a_2}.
    	\end{equation}
    	Therefore, it remains to show that in each case of Theorem $\ref{thm2}$, we have $(\ref{delta>1})$, $(\ref{a1a2delta})$ and $(\ref{inequality_c})$.
    	
    	First, we investigate the case that $d_1\leq d_{1}^{c}$. By numerator rationalization of $(\ref{inequality_c})$, we have
    	$$
    	\sqrt{\delta}<\frac{a_1a_2}{(1-\sqrt{1-a_1})\sqrt{a_2+d_1}}+\sqrt{\delta-a_1a_2}.	
    	$$
    	Then, squaring both sides of the above inequality yields
    	\begin{equation}\label{rationed-c}
    		a_1a_2-\left(\frac{a_1a_2}{(1-\sqrt{1-a_1})\sqrt{a_2+d_1}}\right)^2
    	<\frac{2a_1a_2 \sqrt{\delta-a_1a_2}}{(1-\sqrt{1-a_1})\sqrt{a_2+d_1}}.
    	\end{equation}
    	Thanks to $d_1\leq d_1^c$, the left hand side of the above inequality is nonpositive, and the inequality automatically holds. Hence, we have verified the case $(\ref{thm21})$.
    	
    	Next, we turn to the case that $d_1 > d_{1}^{c}$.
    	Since if we have $(\ref{rationed-c})$, then $(\ref{a1a2delta})$ directly holds.
    	Direct computations show that $(\ref{inequality_c})$, namely $(\ref{rationed-c})$ is equivalent to
    	$$
    	    \dfrac{d_2}{r}
    		<4\left (1-\sqrt{1-a_1} +\dfrac{1}{1-\sqrt{1-a_1}}\dfrac{a_1a_2}{a_2+d_1}
    		\right)^{-2} -1
    		=d^c_2.
    	$$
    	Since we also need $d_2$ satisfies $(\ref{delta>1})$, it is now necessary to consider what kind of $d_1$ satisfies $d^c_2\geq a_2+d_1-1$ so that not $\frac{d_2}{r} < d^c_2$ but $\frac{d_2}{r}<a_2+d_1-1$ is the crucial restriction. It is not difficult to verify that
    	when $a_1a_2\geq 1$, we have $d^c_2 \leq  a_2+d_1-1$.  Hence, the assumptions in the case $(\ref{thm24})$ is sufficient to obtain $(\ref{delta>1})$, $(\ref{a1a2delta})$ and $(\ref{inequality_c})$.
    	When $a_1a_2<1$, $d^c_2\geq a_2+d_1-1$ is equivalent to
    	\begin{equation}\label{d1}
    		\left(\dfrac
    	{1-\sqrt{1-a_1a_2}}
    	{1-\sqrt{1-a_1}}\right)^2-a_2
    	\leq d_1\leq
    	\left(\dfrac
    	{1+\sqrt{1-a_1a_2}}
    	{1-\sqrt{1-a_1}}\right)^2-a_2=d_1^h,
    	\end{equation}
    	where $d_1^h$ is defined in $(\ref{d1h})$.
    	Since $a_1a_2<1$, we have
    	$$
    	\left(\dfrac
    	{1-\sqrt{1-a_1a_2}}
    	{1-\sqrt{1-a_1}}\right)^2-a_2<d_1^c,
    	$$
    	and the restriction of $d_1$ on the left side of $(\ref{d1})$ can be neglected.
        Therefore, when $a_1a_2<1$, if $d_1^h<d_1 \leq d_1^c$, we need $\frac{d_2}{r}<a_2+d_1-1$, if $d_1 > d_1^c$, we need $\frac{d_2}{r}<d_2^c$, and they are the cases $(\ref{thm22})$ and $(\ref{thm23})$ respectively.
    	
    	Summarizing the discussion above, we draw out that under the assumptions of Theorem $\ref{thm2}$, there exist $\beta_c>0$ and $\eta_c>0$ such that $\mathbb P_c$ is positive definite. Namely, there exists a constant $\varepsilon_2>0$ such that
    	$$
    	\mathbf Y^{\mathrm T}\mathbb P_c \mathbf Y\geq
    	\varepsilon_2 |\mathbf Y|^2.
    	$$
    	Substituting it into $(\ref{ec_differentiate})$, we have
    	\begin{equation}\label{ecinquality}
    		\dfrac{\mathrm d}{\mathrm dt}E_c(t)
    		\leq -\varepsilon_2\int_\Omega|\mathbf Y|^2 \mathrm dx
    		-\dfrac{\eta_c}{c}\int_\Omega\left| \nabla w \right|^2 \mathrm dx
    	\leq -\varepsilon_c F_c(t),
    	\end{equation}
    	where $\varepsilon_c=\min\{ \varepsilon_2,\frac{\eta_c}{c} \}$.
    \end{proof}

  On the basis of  Lemma \ref{uvwc},   the $L^\infty$ convergence of $w$ to $\tilde w$ can be verified as follows.
    \begin{lemma}\label{l2contumor}
    	Suppose that the  assumptions of Theorem $\ref{thm2}$ hold and $(u,v,w)$ is the global solution of $(\ref{model})$-$(\ref{ini})$, then
    	$$
    	||w-\tilde w||_{L^\infty(\Omega)}\rightarrow 0\quad as\quad t\rightarrow\infty.
    	$$
    \end{lemma}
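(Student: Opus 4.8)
The plan is to mirror the argument used for Lemma \ref{l2con} in the heterogeneous case, now fed by the Lyapunov estimate supplied by Lemma \ref{uvwc}. The essential structural work — encoding all the parameter restrictions of Theorem \ref{thm2} into the positive definiteness of $\mathbb P_c$ — has already been carried out in Lemma \ref{uvwc}, so what remains is to convert the dissipation inequality $(\ref{decay1c})$ into convergence and then upgrade the resulting $L^2$ decay to $L^\infty$.

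First I would integrate $(\ref{decay1c})$ over $(0,\infty)$. Since $E_c(t)\geq 0$ for all $t>0$, this yields
$$
\int_0^{+\infty}\!\!\left(\|u\|_{L^2}^2+\|v-\tilde v\|_{L^2}^2+\|w-\tilde w\|_{L^2}^2+\|\nabla w\|_{L^2}^2\right)\mathrm dt\leq \frac{1}{\varepsilon_c}E_c(0)<+\infty,
$$
so in particular the nonnegative function $t\mapsto\|w(\cdot,t)-\tilde w\|_{L^2(\Omega)}^2$ is integrable on $(0,\infty)$. To apply Lemma \ref{bdd_convergence} I then need this function to be uniformly continuous in $t$. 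Recalling from Lemma \ref{local_existence} that $v$ and $w$ are uniformly bounded, I would view the third equation of $(\ref{model})$ as a heat equation with globally bounded right-hand side $c(v-w)$; standard $L^p$ parabolic estimates together with a Sobolev embedding then produce a uniform Hölder bound $\|w\|_{C^{\alpha_n,\alpha_n/2}(\bar\Omega\times(k,k+1])}\leq\mathcal C_1$ for all $k\in\mathbb N$, with $\mathcal C_1$ and $\alpha_n$ independent of $k$. This uniform regularity forces $t\mapsto\|w(\cdot,t)-\tilde w\|_{L^2}^2$ to be uniformly continuous, whence Lemma \ref{bdd_convergence} gives $\|w-\tilde w\|_{L^2(\Omega)}\to 0$ as $t\to\infty$. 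Finally, invoking Lemma \ref{wdecay_h} with $\mathfrak w=\tilde w$ upgrades this to the asserted $L^\infty$ convergence.

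The only genuinely delicate point is the uniform-in-time parabolic regularity underpinning the uniform continuity claim; everything else is a direct consequence of the Lyapunov structure. The key observation that makes this step harmless is that the source $c(v-w)$ in the $w$-equation is bounded \emph{independently} of the possible degeneracy of the $v$-equation (the density-limited diffusion affects only the evolution of $v$, not the boundedness furnished by Lemma \ref{local_existence}). Consequently the interior-in-time Hölder estimate holds uniformly across the unit windows $(k,k+1]$, and no new smallness or parameter condition is required — all such conditions have already been absorbed into the positive definiteness of $\mathbb P_c$ established in Lemma \ref{uvwc}.
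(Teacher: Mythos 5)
Your proposal is correct and follows exactly the route the paper intends: the paper omits the proof of this lemma precisely because it is the same as that of Lemma \ref{l2con}, namely integrating the dissipation inequality $(\ref{decay1c})$ using $E_c\geq 0$, obtaining uniform continuity of $t\mapsto\|w(\cdot,t)-\tilde w\|_{L^2}^2$ from uniform parabolic H\"older estimates on the $w$-equation, applying Lemma \ref{bdd_convergence}, and then upgrading to $L^\infty$ via Lemma \ref{wdecay_h} with $\mathfrak w=\tilde w$. Your added remark that the source $c(v-w)$ is bounded independently of the possible degeneracy of the $v$-equation correctly identifies why the regularity step is unaffected by the density-limited diffusion.
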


The proof is omitted since it is the same as  that of Lemma \ref{l2con}.
Then thanks to   Lemma \ref{l2contumor}, there exists
a smooth bounded positive function $\gamma(t)$,  which decays to $0$ as $t\rightarrow\infty$ and satisfies
$$
\tilde w-\gamma(t)\leq w(x,t) \leq \tilde w+\gamma(t),\quad x\in\Omega,\ t \geq 0.
$$
and  the auxiliary ODE system is introduced as follows:
     \begin{equation}\label{odec}
    	\begin{cases}
    	\dfrac{\mathrm d}{\mathrm dt}\bar u_c=\bar u_c\big[1-\bar u_c-a_2\underline v_c-d_1(\tilde w-\gamma(t))\big],&\quad t>0,\\
    	\dfrac{\mathrm d}{\mathrm dt}\bar v_c=r\bar v_c\big[1-\bar v_c-\dfrac{d_2}{r}(\tilde w-\gamma(t))\big],&\quad t>0,\\
    	\dfrac{\mathrm d}{\mathrm dt}\underline v_c=r\underline v_c\big[1-a_1\bar u_c-\underline v_c-\dfrac{d_2}{r}(\tilde w+\gamma(t))\big],&\quad t>0,\\
    \end{cases}
    \end{equation}
    with initial data
    \begin{equation}\label{iniodec}
    	\begin{aligned}
    		&\bar u_c(0)=\bar u^c_0:=\max\limits_{\bar\Omega}u_0,\quad\\
    	    &\bar v_c(0)=\bar v^c_0:=\max\{\max\limits_{\bar\Omega}v_0,\tilde v\},\quad
    	     \underline v_c(0)=\underline v^c_0:=\min \{\min \limits_{\bar\Omega}v_0,\tilde v\}.
    	\end{aligned}
    \end{equation}
    From $(\ref{iniodec})$, we infer that the initial data of $(\ref{odec})$ satisfies
    \begin{equation}
    	0<  \bar u^c_0\leq 1,\,\quad
    	0<\underline v^c_0\leq \tilde v\leq \bar v^c_0 < +\infty.
    \end{equation}

 Parallel to Lemmas \ref{positveode}, \ref{clamp_station}, \ref{ode_clamp_solution} in Section 3.2  and  Lemma \ref{ode_decay_h}  in Section 3.3,  in the following lemmas, we  derive some estimates related to the auxiliary ODE system $(\ref{odec})$-$(\ref{iniodec})$ and then verify the $L^\infty$ convergence of $u$, $v$ to $0$, $\tilde v$ respectively.


    \begin{lemma}\label{ode-tumor}
    	The auxiliary ODE system $(\ref{odec})$-$(\ref{iniodec})$ admits a unique global solution carrying the property
    	\begin{equation*}
    		\begin{aligned}
    			0<\bar u_c(t)\leq 1,\quad
    			0<\bar v_c(t)\leq\max\{\bar v^c_0,1\},\quad
    			0<\underline v_c(t)\leq 1,\ \ \ t\geq 0.
    		\end{aligned}
    	\end{equation*}
    \end{lemma}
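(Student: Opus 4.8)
The plan is to establish the three assertions --- local existence and uniqueness, positivity, and the stated a priori bounds --- by the standard ODE toolkit, exactly as for its heterogeneous counterpart Lemma $\ref{positveode}$ (whose proof was referred to $\cite{tao}$): Picard--Lindel\"of for local solvability, the multiplicative structure $\frac{d}{dt}X=X\,h_X(t)$ of each scalar equation in $(\ref{odec})$ for positivity, scalar comparison with logistic equations for the upper bounds, and finally the continuation theorem to upgrade local to global existence.

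\emph{Local existence, uniqueness and positivity.} Since $\gamma$ is smooth and bounded, the right-hand side of $(\ref{odec})$ is polynomial in $(\bar u_c,\bar v_c,\underline v_c)$ with coefficients continuous in $t$, hence locally Lipschitz in the unknowns; Picard--Lindel\"of then yields a unique solution on a maximal interval $[0,T^{o}_{\max})$. Each equation has the form $\frac{d}{dt}X=X\,h_X(t)$ with $h_X$ continuous, so $X(t)=X(0)\exp\big(\int_0^t h_X(s)\,ds\big)$. Because the three initial values in $(\ref{iniodec})$ are strictly positive, every component stays strictly positive on $[0,T^{o}_{\max})$, and the coordinate planes are invariant.

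\emph{A priori upper bounds.} These follow by discarding the coupling terms, all of which carry favorable signs once positivity is known. For $\underline v_c$, dropping $-a_1\bar u_c\le 0$ and $-\frac{d_2}{r}(\tilde w+\gamma)\le 0$ gives $\frac{d}{dt}\underline v_c\le r\underline v_c(1-\underline v_c)$, so comparison with the logistic equation together with $\underline v^c_0\le\tilde v\le 1$ yields $\underline v_c\le 1$. For $\bar v_c$ and $\bar u_c$ the essential input is the steady-state identity $\frac{d_2}{r}\tilde w=1-\tilde w$ (equivalently $\tilde w=(1+\frac{d_2}{r})^{-1}$), which recasts the $\bar v_c$-bracket as $\tilde w+\frac{d_2}{r}\gamma-\bar v_c$; choosing $\gamma\le\tilde w$ --- admissible since the lower sandwich inequality $\tilde w-\gamma\le w$ only requires $\gamma\ge\tilde w-\inf_\Omega w$ and $\inf_\Omega w>0$ --- drives the effective carrying capacity below $1$, so $\bar v_c\le\max\{\bar v^c_0,1\}$. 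In the same way $-d_1(\tilde w-\gamma)\le 0$ and $-a_2\underline v_c\le 0$ reduce the $\bar u_c$-equation to $\frac{d}{dt}\bar u_c\le\bar u_c(1-\bar u_c)$, whence $\bar u_c\le 1$ from $\bar u^c_0\le 1$.

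\emph{Global existence and the main difficulty.} The bounds above confine the trajectory to a fixed compact box, so by the continuation theorem no component escapes in finite time and $T^{o}_{\max}=+\infty$. The only genuinely delicate step is the upper bound on the upper solutions $\bar u_c,\bar v_c$: the forcing $d_1\gamma$ and $d_2\gamma$ raises their growth rates above the unperturbed logistic ones, so a careless comparison gives only $\max\{\bar u^c_0,\,1+O(\gamma)\}$. Pinning the bounds exactly at $1$ (resp. $\max\{\bar v^c_0,1\}$) is precisely where the identity $\frac{d_2}{r}\tilde w=1-\tilde w$ and the smallness $\gamma\le\tilde w$ enter, and this is the step I expect to require the most care.
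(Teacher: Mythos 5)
Your overall route---Picard--Lindel\"of for local solvability, positivity from the multiplicative form $\frac{\mathrm d}{\mathrm dt}X=X\,h_X(t)$, logistic comparison for the upper bounds, and the continuation theorem for globality---is exactly the toolkit the paper itself invokes (the proof of Lemma~\ref{ode-tumor} is omitted as ``similar to and simpler than'' that of Lemma~\ref{positveode}, which is in turn referred to \cite{tao}). The portions of your argument on existence, uniqueness, positivity, the bound $\underline v_c\le 1$, and global extension are correct.

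The gap sits precisely in the step you flag as delicate. To pin the bounds at $1$ and $\max\{\bar v^c_0,1\}$ you need $\gamma(t)\le\tilde w$ for all $t\ge 0$, and you justify this by asserting that only the lower sandwich inequality $\tilde w-\gamma\le w$ constrains $\gamma$ from below. That ignores the upper half of the sandwich: $w\le\tilde w+\gamma$ forces $\gamma(t)\ge \|w(\cdot,t)\|_{L^\infty(\Omega)}-\tilde w$, and by Lemma~\ref{local_existence} the only available control on $w$ is $\max\{1,\|v_0\|_{L^\infty(\Omega)},\|w_0\|_{L^\infty(\Omega)}\}$, which can exceed $2\tilde w$; in that case $\gamma(0)>\tilde w$ is unavoidable. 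Then $-d_1(\tilde w-\gamma)$ and $-\frac{d_2}{r}(\tilde w-\gamma)$ are positive near $t=0$ and your comparison yields only $\bar u_c\le\max\{\bar u^c_0,\,1+d_1\|\gamma\|_{L^\infty}\}$ and $\bar v_c\le\max\{\bar v^c_0,\,1+\tfrac{d_2}{r}\|\gamma\|_{L^\infty}\}$, not the stated constants. Indeed, at a time with $\bar u_c=1$ one computes $\frac{\mathrm d}{\mathrm dt}\bar u_c=-a_2\underline v_c+d_1(\gamma-\tilde w)$, which is positive when $\gamma-\tilde w$ is large and $\underline v_c$ small, so the barrier at $1$ is genuinely not invariant for such $\gamma$. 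To close the argument you must either impose (and verify) $\gamma\le\tilde w$ on all of $[0,\infty)$, or prove the sharp bounds only for $t\ge T_0$ with $T_0$ large enough that $\gamma(t)\le\tilde w$ (possible since $\gamma\to 0$), retaining a cruder uniform bound of the form $1+C\|\gamma\|_{L^\infty}$ on $[0,T_0]$---which is in fact all that the subsequent Gronwall-type arguments require.
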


    \begin{lemma}\label{clamp_station_c}
    	The solution of $(\ref{odec})$-$(\ref{iniodec})$ satisfies
	\begin{equation*}
		\underline v_c(t)\leq \tilde v\leq \bar v_c(t), \quad \, t \geq 0.
	\end{equation*}
    \end{lemma}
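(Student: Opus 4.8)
The plan is to follow the strategy of Lemma \ref{clamp_station}, again working with the positive/negative part notation $f_+:=\max\{f,0\}$, $f_-:=\min\{f,0\}$, but the argument here turns out to be lighter because the two bounds decouple. The starting point is the algebraic identity coming from the definition of the homogeneous tumor state: since $\tilde v=\tilde w=(1+\tfrac{d_2}{r})^{-1}$, one has $1-\tilde v-\tfrac{d_2}{r}\tilde w=0$, equivalently $1-\tfrac{d_2}{r}\tilde w=\tilde v$. Substituting this identity into the right-hand sides of the $\bar v_c$ and $\underline v_c$ equations in $(\ref{odec})$ rewrites them as
$$
\frac{\mathrm d}{\mathrm dt}\bar v_c=r\bar v_c\Big[(\tilde v-\bar v_c)+\tfrac{d_2}{r}\gamma(t)\Big],\qquad
\frac{\mathrm d}{\mathrm dt}\underline v_c=r\underline v_c\Big[(\tilde v-\underline v_c)-a_1\bar u_c-\tfrac{d_2}{r}\gamma(t)\Big].
$$

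For the upper bound I would set $g:=\tilde v-\bar v_c$, so that $\dot g=-r\bar v_c\big[g+\tfrac{d_2}{r}\gamma\big]$; multiplying by $g_+$ and using $g\,g_+=g_+^2$ together with $\bar v_c>0$ (Lemma \ref{ode-tumor}) and $\gamma>0$ gives
$$
\tfrac12\frac{\mathrm d}{\mathrm dt}g_+^2=-r\bar v_c\Big[g_+^2+\tfrac{d_2}{r}\gamma\,g_+\Big]\le 0.
$$
Symmetrically, for the lower bound I would set $h:=\underline v_c-\tilde v$, so that $\dot h=r\underline v_c\big[-h-a_1\bar u_c-\tfrac{d_2}{r}\gamma\big]$; multiplying by $h_+$ and using $\bar u_c>0$, $\gamma>0$, $a_1>0$ yields
$$
\tfrac12\frac{\mathrm d}{\mathrm dt}h_+^2=-r\underline v_c\Big[h_+^2+a_1\bar u_c\,h_+ +\tfrac{d_2}{r}\gamma\,h_+\Big]\le 0.
$$
Since $(\ref{iniodec})$ gives $\bar v_c^0\ge\tilde v$ and $\underline v_c^0\le\tilde v$, both $g_+$ and $h_+$ vanish at $t=0$, and the two differential inequalities above force $g_+\equiv h_+\equiv 0$, which is exactly $\underline v_c\le\tilde v\le\bar v_c$.

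The key structural point, and the only place where care is needed, will be checking that every extra term carries a sign favorable to the desired monotonicity after one takes positive parts. This is where the present proof genuinely simplifies relative to Lemma \ref{clamp_station}: there the cross-competition terms (e.g.\ $a_2(v^*-\underline v_h)$) had indefinite sign, which forced a Young-inequality estimate and a coupled Gronwall argument across all four quantities. Here the $\bar v_c$ equation contains no $\bar u_c$ at all, so the upper bound is completely decoupled, while in the $\underline v_c$ equation the competition term $-a_1\bar u_c$ and the perturbation $-\tfrac{d_2}{r}\gamma$ both push $\underline v_c$ below $\tilde v$, i.e.\ contribute nonpositively once multiplied by $h_+$. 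Consequently no coupling, no Young's inequality and no Gronwall constant are required: the positivity $\bar u_c,\bar v_c,\underline v_c>0$ from Lemma \ref{ode-tumor} and the positivity of $\gamma$ are the only inputs beyond the defining identity $1-\tfrac{d_2}{r}\tilde w=\tilde v$.
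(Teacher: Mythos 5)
Your proof is correct and follows exactly the route the paper intends: the paper omits this proof with the remark that it is ``similar to that of Lemma \ref{clamp_station} and simpler,'' and your argument is precisely that simplification --- the same positive-part technique, with the correct observation that the identity $1-\tfrac{d_2}{r}\tilde w=\tilde v$ and the favorable signs of $-a_1\bar u_c$ and $\mp\tfrac{d_2}{r}\gamma$ make both differential inequalities nonpositive outright, so no Young's inequality or Gronwall coupling is needed. Nothing to add.
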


    \begin{lemma}\label{ode_clamp_solution_c}
    	 Suppose that the assumptions of Theorem $\ref{thm2}$ hold. Let $(u,v,w)$ be the solution of $(\ref{model})$-$(\ref{ini})$, and $(\bar u_c,\bar v_c,\underline v_c)$ be the solution of $(\ref{odec})$-$(\ref{iniodec})$, then
	\begin{equation*}
		\begin{aligned}
		0  \leq	u(x,t)\leq  \bar u_c(t),\quad x\in\Omega,\ t \geq 0,\\
		\underline v_c(t)\leq v(x,t)\leq  \bar v_c(t),\quad x\in\Omega,\ t \geq 0 .
		\end{aligned}
	\end{equation*}
    \end{lemma}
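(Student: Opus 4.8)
The plan is to follow closely the proof of Lemma~\ref{ode_clamp_solution}, exploiting the simplification that for the homogeneous tumor state the lower barrier for $u$ is simply $0$, which is already supplied by the positivity $0<u<1$ in Lemma~\ref{local_existence}. Thus the inequality $0\le u$ needs no argument, and I only have to establish the three remaining bounds. To this end I introduce the difference functions
$$\bar U:=\bar u_c-u,\qquad \bar V:=\bar v_c-v,\qquad \underline V:=v-\underline v_c,$$
and it suffices to show that their negative parts vanish identically, that is $\bar U_-=\bar V_-=\underline V_-\equiv 0$ on $\Omega\times[0,\infty)$, where $f_-:=\min\{f,0\}$.

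Next I would differentiate each difference function, substituting $\bar u_c=u+\bar U$ (and similarly for $v$), and using the defining ODEs $(\ref{odec})$ together with the PDEs $(\ref{model})$. For $\bar U$ this yields
$$\bar U_t=\big[1-u-\bar u_c-a_2\underline v_c-d_1(\tilde w-\gamma)\big]\bar U+a_2u\,\underline V+d_1u\big(w-(\tilde w-\gamma)\big),$$
where the last term is nonnegative because $w\ge\tilde w-\gamma$ and $u\ge0$, hence may be dropped to obtain a differential inequality from below. The analogous computation for $\underline V$ produces a degenerate diffusion term $D\nabla\cdot((1-u)\nabla\underline V)$, a coupling term $ra_1\underline v_c\bar U$, and a nonnegative remainder $d_2\underline v_c\big((\tilde w+\gamma)-w\big)\ge0$ (using $w\le\tilde w+\gamma$). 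The pleasant simplification occurs in the equation for $\bar V$: there the coupling-type contribution appears as the nonnegative forcing $ra_1uv+d_2v\big(w-(\tilde w-\gamma)\big)\ge0$ (since $u,v\ge0$), which can be discarded, leaving $\bar V$ self-contained and decoupled. Consequently the only genuine coupling is the symmetric pair $\bar U\leftrightarrow\underline V$.

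I would then multiply each inequality by the corresponding negative part, which reverses the inequalities since $f_-\le0$, and use the elementary facts $f\cdot f_-=f_-^2$ together with $\bar U\,\underline V_-\le\bar U_-\,\underline V_-$ and $\underline V\,\bar U_-\le\underline V_-\,\bar U_-$ to replace full differences by negative parts in the cross terms. Integrating over $\Omega$, the degenerate diffusion contributions are handled by integration by parts: the Neumann condition $\partial_\nu v=0$ gives $\partial_\nu\bar V=\partial_\nu\underline V=0$, so the boundary terms vanish and $-D\int_\Omega(1-u)|\nabla\underline V_-|^2\,\mathrm dx\le0$ by $1-u>0$; these terms are therefore simply dropped. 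Bounding every zeroth-order coefficient from above by a constant (all the subtracted quantities $u,\bar u_c,v,\underline v_c,\tfrac{d_2}{r}\tilde w$ are nonnegative and $\gamma$ is bounded) and applying Young's inequality to the coupling terms with $u\le1$ and $\underline v_c\le1$, I obtain, after adding the three integral inequalities, a single differential inequality
$$\frac{\mathrm d}{\mathrm dt}\int_\Omega\big[(\bar U_-)^2+(\bar V_-)^2+(\underline V_-)^2\big]\mathrm dx\le K\int_\Omega\big[(\bar U_-)^2+(\bar V_-)^2+(\underline V_-)^2\big]\mathrm dx$$
for some constant $K>0$. Since $(\ref{iniodec})$ gives $\bar u_c(0)\ge u_0$, $\bar v_c(0)\ge v_0$ and $\underline v_c(0)\le v_0$, the three negative parts vanish at $t=0$, and Gronwall's inequality forces them to vanish for all $t\ge0$, which is exactly the assertion.

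The routine parts are the sign bookkeeping and the elementary coefficient estimates. The step requiring the most care — the genuine obstacle, exactly as in Lemma~\ref{ode_clamp_solution} — is the treatment of the density-limited diffusion operator $D\nabla\cdot((1-u)\nabla\,\cdot\,)$: one must verify that, after multiplying by the negative part and integrating, the Neumann boundary condition eliminates the boundary contribution and the strict positivity $1-u>0$ from Lemma~\ref{local_existence} gives the correct sign, so that this term can be discarded rather than estimated. Everything else reduces to the coupled $L^2$ energy--Gronwall scheme already employed for the heterogeneous state.
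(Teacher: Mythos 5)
Your proposal is correct and follows exactly the route the paper intends: it omits the proof of Lemma~\ref{ode_clamp_solution_c} precisely because it is the adaptation of the argument for Lemma~\ref{ode_clamp_solution} that you carry out, with the lower bound $0\le u$ free from Lemma~\ref{local_existence}, the $\bar V$-inequality decoupling because the $\bar v_c$-equation contains no $a_1\bar u_c$ term, and the remaining $\bar U\leftrightarrow\underline V$ coupling closed by the same negative-part/Young/Gronwall scheme. Your identification of the sign of the degenerate diffusion term after integration by parts and of the vanishing initial data from $(\ref{iniodec})$ is also exactly as in the paper's template.
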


 \begin{lemma}\label{ode_decay_c}
 	Suppose that the assumptions of Theorem $\ref{thm2}$ hold, and $(u,v,w)$ is the global solution of $(\ref{model})$-$(\ref{ini})$. Then
 	\begin{equation*}
 	||u||_{L^\infty(\Omega)}
 	+||v-\tilde v||_{L^\infty(\Omega)}\rightarrow 0
 	\quad as\quad t\rightarrow\infty.
 	\end{equation*}
 \end{lemma}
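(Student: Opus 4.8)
The plan is to reduce everything to the large-time behaviour of the auxiliary ODE system (\ref{odec})--(\ref{iniodec}) and then quote the clamping relations. By Lemma \ref{ode_clamp_solution_c} we have $0\le u(x,t)\le\bar u_c(t)$ and $\underline v_c(t)\le v(x,t)\le\bar v_c(t)$ uniformly in $x$, so it suffices to prove the three scalar convergences $\bar u_c(t)\to0$, $\bar v_c(t)\to\tilde v$ and $\underline v_c(t)\to\tilde v$; the asserted $L^\infty$ decay of $u$ and of $v-\tilde v$ follows at once. Throughout I may assume the two structural inequalities $\delta>1$ and $a_1a_2<\delta$, where $\delta=(a_2+d_1)(1+\frac{d_2}{r})^{-1}$, because they were already shown to be consequences of the hypotheses of Theorem \ref{thm2} in the proof of Lemma \ref{uvwc} (they are precisely (\ref{delta>1}) and (\ref{a1a2delta})).

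The equation for $\bar v_c$ decouples from $\bar u_c$ and $\underline v_c$: it is a scalar logistic ODE forced by the vanishing term $\gamma(t)$, whose limiting carrying capacity is $1-\frac{d_2}{r}\tilde w=\tilde v$. Squeezing it between the autonomous logistic solutions with carrying capacities $\tilde v\pm\varepsilon$ gives $\bar v_c\to\tilde v$, in accordance with $\bar v_c\ge\tilde v$ from Lemma \ref{clamp_station_c}. The substance of the lemma therefore lies in the coupled pair $(\bar u_c,\underline v_c)$, which, once $\gamma$ is dropped, is a planar Lotka--Volterra competition system whose relevant semi-trivial equilibrium is exactly $(0,\tilde v)$. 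Here $\delta>1$ has the transparent meaning that $\bar u_c$ cannot invade the state $\{\underline v_c=\tilde v\}$ (its per-capita growth rate there equals $1-a_2\tilde v-d_1\tilde w=1-\delta<0$), while $\delta>1$ together with $a_1a_2<\delta$ guarantees the reverse invasibility $\tilde v>a_1(1-d_1\tilde w)$, i.e. $\underline v_c$ can invade any $\bar u_c$-only state. These two facts eliminate both the competitor equilibrium $(1-d_1\tilde w,0)$ and any positive interior equilibrium, singling out $(0,\tilde v)$ as the only possible limit.

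To convert this into convergence I would, in the weak-competition range $a_1a_2\le1$, run a direct bootstrapping on the $\limsup$/$\liminf$ of the solutions: from $\bar u_c\le1$ and the $\underline v_c$-equation one gets $\liminf\underline v_c\ge\tilde v-a_1\limsup\bar u_c$, and feeding this into the $\bar u_c$-equation yields $(1-a_1a_2)\limsup\bar u_c\le1-\delta<0$, which is impossible unless $\limsup\bar u_c=0$; thereafter the $\underline v_c$-equation is asymptotically logistic with carrying capacity $\tilde v$, so $\underline v_c\to\tilde v$. Equivalently one can use the separable Lyapunov function $\bar u_c+\lambda(\underline v_c-\tilde v-\tilde v\ln\frac{\underline v_c}{\tilde v})$, whose dissipation rate is governed by a quadratic form that can be made positive semidefinite exactly when $a_1a_2\le1$.

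The \emph{main obstacle} is the strong-competition regime $a_1a_2>1$, which genuinely arises in cases (\ref{thm21}) and (\ref{thm24}): there the feedback map above is expanding and neither the bootstrapping recursion nor the separable Lyapunov function decays, although $(0,\tilde v)$ remains the unique attractor. I would handle it by exploiting the monotone (competitive) structure of the planar system $(\bar u_c,\underline v_c)$: such systems possess no nonconstant periodic orbits, so Poincar\'e--Bendixson forces every bounded trajectory to converge to an equilibrium, and the equilibrium analysis above leaves only $(0,\tilde v)$. Because the true system carries the perturbation $\gamma(t)\to0$, this limiting picture must be transferred by the theory of asymptotically autonomous systems, whose $\omega$-limit sets lie in those of the autonomous limit; a preliminary uniform persistence estimate for $\underline v_c$ (again a consequence of $a_1a_2<\delta$) keeps the trajectory in a compact subset of the open quadrant so that this machinery applies. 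With $\bar u_c\to0$, $\bar v_c\to\tilde v$ and $\underline v_c\to\tilde v$ in hand, Lemmas \ref{clamp_station_c} and \ref{ode_clamp_solution_c} yield the claimed convergences of $\|u\|_{L^\infty(\Omega)}$ and $\|v-\tilde v\|_{L^\infty(\Omega)}$.
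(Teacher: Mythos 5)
Your proposal is correct and shares the paper's overall skeleton --- pass to the auxiliary ODE system via Lemmas \ref{clamp_station_c} and \ref{ode_clamp_solution_c}, observe that $\bar v_c$ decouples and tends to $\tilde v$, and treat $(\bar u_c,\underline v_c)$ as a planar competitive Lotka--Volterra pair converging to the semi-trivial state $(0,\tilde v)$ --- but the two key steps are executed differently. First, you rule out a positive interior equilibrium by deriving the two boundary invasibility inequalities from the structural conditions $\delta>1$ and $a_1a_2<\delta$ already secured in the proof of Lemma \ref{uvwc}; the paper instead freezes the perturbation and computes the nullcline intersection $(\mathfrak u_0,\mathfrak v_0)$ explicitly, checking $\mathfrak u_0<0$ or $\mathfrak v_0<0$ separately in each of the four parameter cases of Theorem \ref{thm2}. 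Your route is cleaner and avoids repeating the case analysis, but the implication ``$\delta>1$ and $a_1a_2<\delta$ imply $\tilde v>a_1(1-d_1\tilde w)$'' is not immediate and should be written out; it does hold, by splitting on $a_1a_2$: for $a_1a_2\le 1$ use $\frac{d_2}{r}<a_2+d_1-1$ to get $a_1\big(\frac{d_2}{r}-d_1\big)<a_1a_2-a_1\le 1-a_1$, while for $a_1a_2\ge 1$ use $\tilde v>\frac{a_1a_2}{a_2+d_1}\ge\frac{a_1}{1+a_1d_1}$. Second, to pass from the limiting autonomous picture to the system carrying the vanishing forcing $\gamma(t)$, you invoke Markus--Thieme asymptotically autonomous theory together with a persistence estimate for $\underline v_c$, whereas the paper replaces $\gamma(t)$ by the small constant $\tfrac12\varepsilon$ beyond a large time and compares with the resulting genuinely autonomous competitive system $(\bar{\mathfrak u},\underline{\mathfrak v})$, so that only classical planar competitive theory is needed, at the price of an $\varepsilon$-shift of the target equilibrium. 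Your $\limsup/\liminf$ bootstrap in the range $a_1a_2\le 1$ is a third, fully elementary mechanism not present in the paper. All of these are sound; no gap, only the one unproved implication noted above needs to be supplied.
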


The proofs of Lemmas \ref{ode-tumor},  \ref{clamp_station_c} and  \ref{ode_clamp_solution_c} are omitted since they are similar to  those of Lemmas \ref{positveode}, \ref{clamp_station} and \ref{ode_clamp_solution}  respectively and simpler.
However, the proof of  Lemma \ref{ode_decay_h}  cannot be applied to Lemma \ref{ode_decay_c} since for the  homogeneous tumor state
$(0,\tilde v, \tilde w)$,   $(0, \underline v_c)$ is  the the lower solution of $(u,v)$ as demonstrated in Lemma   \ref{ode_clamp_solution_c}.
Our strategy here is to use the results of ODE competitive systems.

\begin{proof}[Proof of Lemma \ref{ode_decay_c}]
    Thanks to the Lemmas \ref{clamp_station_c} and  $\ref{ode_clamp_solution_c}$,  it suffices  to show
    $$
    ||\bar u_c||_{L^\infty(\Omega)}
    +||\bar v_c-\underline v_c||_{L^\infty(\Omega)}\rightarrow 0
		\quad as\quad t\rightarrow\infty,
    $$
    where $(\bar u_c,\bar v_c,\underline v_c)$ is the solution of $(\ref{odec})$-$(\ref{iniodec})$.
	Notice that the equation of $\bar v_c$ in $(\ref{odec})$ is independent to $\bar u_c$ and $\underline v_c$, we directly obtain that $\bar v_c(t)\rightarrow \tilde v$ as $t\rightarrow+\infty$. Hence, we only need to prove that
	 for any $0<\varepsilon<\varepsilon_0$, where $\varepsilon_0=\varepsilon_0(a_1,a_2,d_1,d_2,r)$ is small enough,
	there exists $\mathcal T=\mathcal T(\varepsilon)$ such that $\forall\, t>\mathcal T$,
	\begin{equation}\label{uvc_epsilon}
		||\bar u_c||_{L^\infty(\Omega)}
		+||\tilde v -\underline v_c||_{L^\infty(\Omega)}<\varepsilon.
	\end{equation}
	Thanks to the selection of $\gamma(t)$, we know that for all $\varepsilon>0$, there exists $\mathcal T'>0$ such that for all $t>\mathcal T'$, $\gamma(t)\leq\frac{1}{2}\left(\min\{\,d_1,\frac{d_2}{r}\}\right)^{-1}\varepsilon$. Treating $\mathcal T'$ as the initial time, we let $( \bar{\mathfrak u}, \underline{\mathfrak v})$ be the solution of the following ODE system:
	\begin{equation}\label{uv_f}
		\begin{cases}
			\dfrac{\mathrm d}{\mathrm dt}\bar{\mathfrak u}=\bar{\mathfrak u}\big[1-\bar{\mathfrak u}-a_2\underline{\mathfrak v}-d_1\tilde w+\dfrac{1}{2}\varepsilon\big],&\quad t>\mathcal T' ,\\
    	\dfrac{\mathrm d}{\mathrm dt}\underline{\mathfrak v}=r\underline{\mathfrak v}\big[1-a_1\bar{\mathfrak u}-\underline{\mathfrak v}-\dfrac{d_2}{r}\tilde w-\dfrac{1}{2}\varepsilon\big],&\quad t>\mathcal T',
		\end{cases}
	\end{equation}
	with initial data
	$$
	\bar{\mathfrak u}(\mathcal T')=\bar u_c(\mathcal T'),\quad \underline{\mathfrak v}(\mathcal T')=\underline v_c(\mathcal T').
	$$
	 Since $\bar u_c$ and $\underline v_c$ in $(\ref{odec})$ form a competitive ODE system, it is not difficult to verify that $\bar{\mathfrak u}(t)\geq\bar u_c(t)$ and $\underline{\mathfrak v}(t)\leq\underline v_c(t)$ for all $t\geq\mathcal T'$. Hence, to prove $(\ref{uvc_epsilon})$, it suffice to demonstrate that there exists $\mathcal T=\mathcal T(\varepsilon)>\mathcal T'$ such that $\forall\, t>\mathcal T$,
	 $$
	 ||\bar{\mathfrak u}(t)||_{L^\infty}+||\underline{\mathfrak v}(t)-\tilde v||_{L^\infty}<\varepsilon.
	 $$
	 Denote two straight lines on $(\bar{\mathfrak u},\underline{\mathfrak v})$ plane
	 $$
	 \begin{aligned}
	 	&\mu:
	    1-\bar{\mathfrak u}-a_2\underline{\mathfrak v}-d_1\tilde w+\frac{1}{2}\varepsilon=0,\\
	    &\nu:
	    1-a_1\bar{\mathfrak u}-\underline{\mathfrak v}-\frac{d_2}{r}\tilde w-\frac{1}{2}\varepsilon=0.
	 \end{aligned}
	 $$
	
	 First of all, we show that under the assumptions of Theorem $\ref{thm2}$, system $(\ref{uv_f})$ does not have positive steady state.
	 If $a_1a_2 = 1$, then $\mu$ and $\nu$ are parallel.
	 If $a_1a_2 \not = 1$, direct computations show that $\mu$ and $\nu$ intersect at $(\mathfrak u_0,\mathfrak v_0)$, where
	 $$
	 \mathfrak u_0=
	 \frac{1}{1-a_1a_2}
	 \left (
	 1-\delta+\frac{1}{2}(a_2+1)\varepsilon
	  \right ),
	 $$
	 $$
	 \mathfrak v_0=
	 \left (1+\frac{d_2}{r}\right )^{-1}
	 \left[1+\frac{a_1}{a_1a_2-1}\left(1+\frac{d_2}{r}-a_2-d_1\right )
	 +\frac{2a_1a_2+a_1-1}{2(1-a_1a_2)}\varepsilon
	 \right].
	 $$
	 We can immediately obtain that when $\varepsilon_0$ is sufficiently small and $a_1a_2<1$, there is $\mathfrak u_0<0$. Hence, in the case $(\ref{thm22})$ and $(\ref{thm23})$, there is no positive steady state. It remains to consider the cases $(\ref{thm21})$ and $(\ref{thm24})$ when $a_1a_2>1$.
	
	 In the case $(\ref{thm21})$, thanks to its last inequality, we have
	 $$
	 	 \mathfrak v_0
	 	 <-\left (1+\frac{d_2}{r}\right )^{-1}
	 	 \frac{d_1}{a_2}<0.
	 $$
	
	 In the case $(\ref{thm24})$, since $\mathfrak v_0$ increases in $d_2$ and $a_1a_1>1$, we have
	 $$
	 \begin{aligned}
	 	\mathfrak v_0
	 	 <\left (1+\frac{d_2}{r}\right )^{-1}
	 	 \left\{1+\frac{a_1}{a_1a_2-1}\left[4
    		\left (1-\sqrt{1-a_1} +\dfrac{1}{1-\sqrt{1-a_1}}\dfrac{a_1a_2}{a_2+d_1}
    		\right)^{-2}-a_2-d_1\right ]\right\}.
	 \end{aligned}
	 $$
	 Since the right hand side of the above inequality decreases in $d_1$, we obtain that
	 $$
	 \begin{aligned}
	 	\mathfrak v_0
	 	 <&\left (1+\frac{d_2}{r}\right )^{-1}
	 	 \left\{1+\frac{a_1}{a_1a_2-1}\left[4
    		\left (1-\sqrt{1-a_1} +\dfrac{1}{1-\sqrt{1-a_1}}\dfrac{a_1a_2}{a_2+d_1^c}
    		\right)^{-2}-a_2-d_1^c\right ]\right\}\\
    	=&\left (1+\frac{d_2}{r}\right )^{-1}
	\left\{ 1-\frac{a_1}{\left(1-\sqrt{1-a_1}\,\right)^2}
    	\right\}\\
    	<& 0.
	 \end{aligned}
	 $$
	
	 Next, to describe the trajectory of $(\mu,\nu)$, it remains to consider the semi-trivial steady state of $(\ref{uv_f})$. Without loss of generality, we assume that $(\ref{uv_f})$ has two positive semi-trivial steady state
	 $$\left (1-d_1\left(1+\dfrac{d_2}{r}\right)^{-1}\!+\frac{1}{2}\varepsilon,0\right) \quad\text{and} \quad
	 \left (0,\,\left(1+\dfrac{d_2}{r}\right)^{-1}\!-\frac{1}{2}\varepsilon\right ).$$
	 Since it is not difficult to verity that $\nu$ stays above $\mu$ on the $(\bar{\mathfrak u},\underline{\mathfrak v})$ plane when $\varepsilon_0$ is small enough, it follows from the ODE theory of competitive system that
	 $$(\bar{\mathfrak u},\underline{\mathfrak v}) \rightarrow
	 \left (0,\,\left(1+\dfrac{d_2}{r}\right)^{-1}\!-\frac{1}{2}\varepsilon\right )
	 ,\quad t\rightarrow +\infty.$$
	 Hence, there exists $\mathcal T\geq\mathcal T'$ such that for all $t>\mathcal T$, we have $\bar{\mathfrak u}<\frac{1}{4}\varepsilon$ and $\tilde v-\underline{\mathfrak v}<\frac{1}{4}\varepsilon+\frac{1}{2}\varepsilon=\tfrac{3}{4}\varepsilon$, which implies that $(\ref{uvc_epsilon})$ holds for all $t>\mathcal T$.
\end{proof}

Now we are ready to prove Theorem $\ref{thm2}$ by establishing the desired quantitative convergence statement on stabilisation.
\begin{proof}[Proof of Theorem $\ref{thm2}$]
	With Lemma $\ref{wdecay_h}$ and Lemma $\ref{ode_decay_c}$ at hand, by  arguments similar to the method we use to prove $(\ref{w_infi})$, we have $w$ converges exponentially to $\tilde w$ in $L^{\infty}$ as $t\rightarrow\infty$. Therefore, there exist two constants $\mathcal C_6>0$ and $\kappa_2>0$ such that $\gamma$ could be selected as $\gamma(t)=\mathcal C_6e^{-\kappa_2 t}$.
	To obtain the exponential decay rate of $u$ and $v$, we turn back to $(\ref{odec})$. First, by substituting the new $\gamma$ into the first equation of $(\ref{odec})$, we have
	$$
		\begin{aligned}
			\dfrac{\mathrm d}{\mathrm dt}(\bar v_c-\tilde v)
		=& r\bar v_c\big[-(\bar v_c-\tilde v)+\mathcal C_6 d_2e^{-\kappa_2 t}\ \big]\\
		\leq & -r\tilde v(\bar v_c-\tilde v)+r \max \left\{1,\left\|v_0\right\|_{L^{\infty}(\Omega)}\right\} \mathcal C_6 d_2e^{-\kappa_2 t}.
		\end{aligned}
	$$
	By multiplying the above inequality with $e^{\mathcal A_5 t}$, where $\mathcal A_5=\frac{1}{2}\min\{r,\,\kappa_2\}$, it is not difficult to obtain that $\bar v_c$ tends to $\tilde v$ exponentially.
	
	Next, we consider the decay rate of $\bar u_c$. $(\ref{uvc_epsilon})$ implies that there exists $\mathcal T_2>0$ such that for all $t>\mathcal T_2$, we have
	$$
	\begin{aligned}
		\dfrac{\mathrm d}{\mathrm dt}\bar u_c
	=&\bar u_c\Big[-\Big (\frac{a_2+d_1}{1+\frac{d_2}{r}}-1\Big )-\bar u_c+a_2(\tilde v-\underline v_c)+\mathcal C_6 d_1 e^{-\kappa_2 t}\Big]\\
	\leq & -\dfrac{1}{2}\Big (\dfrac{a_2+d_1}{1+\frac{d_2}{r}}-1\Big )\bar u_c.
	\end{aligned}
	$$
	Since the coefficient before $\bar u_c$ is negative due to $(\ref{delta>1})$, we directly have $\bar u_c$ decays exponentially to $ 0$ when $t>\mathcal T_2$.
	
	Finally, we consider the equation of $\underline v_c$ in $(\ref{odec})$. When $t$ is sufficiently large,
	$$
	\begin{aligned}
		\dfrac{\mathrm d}{\mathrm dt}(\tilde v-\underline v_c)
	=&-r\underline v_c(\tilde v-\underline v_c)+r\underline v_c(\mathcal C_6 d_2 e^{-\kappa_2 t}+a_1\bar u_c)\\
	\leq & -\frac{1}{2} r\tilde v(\tilde v-\underline v_c)+r\tilde v(\mathcal C_6 d_2 e^{-\kappa_2 t}+a_1\bar u_c).
	\end{aligned}
	$$
	By the discussion above, the second bracket in the right hand side of the above inequality decays exponentially to $0$ when $t>\mathcal T_2$. Hence, similar to the case of $\bar v_c$, we obtain that $\underline v_c$ tends to $\tilde v$ exponentially when $t$ is sufficiently large.
	
	Summarizing the discussion above, by using Lemma $\ref{ode_clamp_solution_c}$, we have
	$$
	\begin{aligned}
		&||u||_{L^\infty(\Omega)}
		+||v-\tilde v||_{L^\infty(\Omega)}\\
	\leq & ||u||_{L^\infty(\Omega)}
	+||\bar v_c-\tilde v||_{L^\infty(\Omega)}
	+||\underline v_c-\tilde v||_{L^\infty(\Omega)}\rightarrow 0\quad \text{exponentially  as} \quad t\rightarrow\infty.
	\end{aligned}
	$$
	The proof is complete.
	\end{proof}

\section{The healthy state}
This section is devoted to the proof of Theorem $\ref{thm3}$, which is about the global convergence of the  healthy state
     $$(u, v, w) = (1,0,0).
     $$
The main idea of the proof is similar to those of Theorems \ref{thm1} and \ref{thm2}.
The first key step is still the construction of  a proper Lyapunov functional  in the following lemma.

\begin{lemma}\label{uvwr}
    	Suppose that assumptions of Theorem $\ref{thm3}$ hold, $(u,v,w)$ is the global solution of $(\ref{model})$-$(\ref{ini})$. Define
    	$$
    	A_r(t)=\int_\Omega (u(x,t)-1-\ln u(x,t))\, \mathrm dx,
    	$$
    	
    	$$
    	B_r(t)=\int_\Omega v(x,t)\,\mathrm dx,
    	$$
    	
    	$$
    	C_r(t)=\frac{1}{2}\int_\Omega w^2(x,t)\,\mathrm dx.
    	$$
    	Then there exist $\beta_r>0$, $\eta_r>0$ and $\varepsilon_r>0 $ such that the functions $E_r(t)$ and $F_r(t)$ defined by
    	\begin{equation}\label{er}
    		E_r(t)=A_r(t)+\dfrac{\beta_r}{r}B_r(t)+\dfrac{\eta_r}{c}C_r(t), \quad t>0,
    	\end{equation}
    	and
    	\begin{align}\label{fr}
    		F_r(t)=&
    		\int_\Omega (u(x,t)-1)^2\,\mathrm dx
    		+\int_\Omega v(x,t)^2\,\mathrm dx
    		+\int_\Omega w(x,t)^2\,\mathrm dx\nonumber\\
    		&+\int_\Omega \left|\nabla w(x,t)\right|^2\mathrm dx, \quad t>0,
    	\end{align}
    	satisfy
    	\begin{equation}\label{positive_er}
    		E_r(t)\geq 0, \quad t\geq 0,
    	\end{equation}
    	as well as
    	\begin{equation}\label{decay1r}
    		\dfrac{\mathrm d}{{\mathrm dt}}E_r(t)\leq-\varepsilon_r F_r(t).
    	\end{equation}
    \end{lemma}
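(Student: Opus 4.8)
The plan is to mirror the proofs of Lemmas \ref{uvwh} and \ref{uvwc}, transporting every step to the healthy state $(1,0,0)$. First I would settle the sign: since $g(s):=s-1-\ln s$ obeys $g(s)\ge 0$ with equality only at $s=1$ (Taylor expansion at $s=1$, exactly as for $\mathcal I$ in Lemma \ref{uvwh}), we get $A_r(t)\ge 0$, and $B_r(t),C_r(t)\ge 0$ follow from $v,w>0$; hence $E_r(t)\ge 0$, which is (\ref{positive_er}). Next I would differentiate along the flow. From $\frac{u-1}{u}u_t=(u-1)\big[-(u-1)-a_2v-d_1w\big]$ one obtains $\frac{\mathrm d}{\mathrm dt}A_r=-\int_\Omega(u-1)^2-a_2\int_\Omega(u-1)v-d_1\int_\Omega(u-1)w$. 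For $B_r=\int_\Omega v$ the divergence term integrates to zero under the Neumann condition, so \emph{no} gradient term survives and $\frac{1}{r}\frac{\mathrm d}{\mathrm dt}B_r=(1-a_1)\int_\Omega v-a_1\int_\Omega(u-1)v-\int_\Omega v^2-\frac{d_2}{r}\int_\Omega vw$. The computation for $C_r$ is identical to the other two lemmas, giving $-\frac{1}{c}\int_\Omega|\nabla w|^2+\int_\Omega vw-\int_\Omega w^2$.

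The decisive manipulation, and the place where the hypotheses $a_1>1$ and $v_0\le 1$ are used, is the absorption of the linear term $(1-a_1)\int_\Omega v$. By Lemma \ref{local_existence} and $v_0\le 1$ we have $v\le 1$, so $v^2\le v$; since $1-a_1<0$, this yields $(1-a_1)\int_\Omega v-\int_\Omega v^2\le -a_1\int_\Omega v^2$ (the analogue of the use of $u\le 1$ in (\ref{dac})). Assembling $E_r=A_r+\frac{\beta_r}{r}B_r+\frac{\eta_r}{c}C_r$ then gives $\frac{\mathrm d}{\mathrm dt}E_r\le-\int_\Omega\mathbf Z^{\mathrm T}\mathbb P_r\mathbf Z-\frac{\eta_r}{c}\int_\Omega|\nabla w|^2$, with $\mathbf Z=(u-1,v,w)^{\mathrm T}$ and
$$
\mathbb P_r=\begin{pmatrix}\displaystyle
1 & \dfrac{a_2+a_1\beta_r}{2} & \dfrac{d_1}{2}\\
\dfrac{a_2+a_1\beta_r}{2} & a_1\beta_r & \dfrac{\dfrac{d_2}{r}\beta_r-\eta_r}{2}\\
\dfrac{d_1}{2} & \dfrac{\dfrac{d_2}{r}\beta_r-\eta_r}{2} & \eta_r
\end{pmatrix},
$$
so that (\ref{decay1r}) reduces to finding $\beta_r,\eta_r>0$ making $\mathbb P_r$ positive definite. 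Note the $(2,2)$ entry is $a_1\beta_r$ rather than $\beta_r$, the one structural difference from Lemma \ref{uvwh}.

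I would then run Sylvester's criterion as in Lemma \ref{uvwh}. With $\alpha=\frac{1}{2}(a_2+a_1\beta_r)$, the leading minors are $1$ and $a_1\beta_r-\alpha^2=\frac{1}{4}\Psi_r(\beta_r)$, where $\Psi_r(\beta_r):=-a_1^2\beta_r^2+2a_1(2-a_2)\beta_r-a_2^2$. Treating $\det\mathbb P_r$ as a downward quadratic in $\eta_r$ whose constant term $2\alpha d_1\frac{d_2}{r}\beta_r-\big(\frac{d_2}{r}\beta_r\big)^2-a_1\beta_r d_1^2=-\big(\frac{d_2}{r}\beta_r-\alpha d_1\big)^2-(a_1\beta_r-\alpha^2)d_1^2$ is negative once $\Psi_r>0$, the existence of an admissible $\eta_r$ reduces to the positivity of the discriminant, which I expect to factor as $\Delta_r=\Phi_r(\beta_r)\Psi_r(\beta_r)$ with $\Phi_r(\beta_r):=-a_1^2\beta_r^2+2\big[2a_1+2\frac{d_2}{r}-a_1(a_2+d_1)\big]\beta_r-(a_2+d_1)^2$. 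Hence $\mathbb P_r$ is positive definite for some $\beta_r,\eta_r>0$ if and only if a single $\beta_r>0$ satisfies $\Phi_r(\beta_r)>0$ and $\Psi_r(\beta_r)>0$ simultaneously.

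The main obstacle is the last step: verifying that the two root-intervals $S_1^r$ (from $\Phi_r$) and $S_2^r$ (from $\Psi_r$) overlap under the hypotheses of Theorem \ref{thm3}. Here $a_2<1$ makes $S_2^r$ nonempty and $\frac{d_2}{r}>a_1(a_2+d_1-1)$ makes $S_1^r$ nonempty, with $R_2^r=\frac{1+\sqrt{1-a_2}}{\sqrt{a_1}}$ and $L_1^r=\frac{1}{a_1}\big(\sqrt{a_1+\frac{d_2}{r}}-\sqrt{a_1+\frac{d_2}{r}-a_1(a_2+d_1)}\big)$; since $L_2^r<\frac{1}{\sqrt{a_1}}<R_1^r$ holds automatically, overlap reduces to $L_1^r<R_2^r$. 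I would establish this by numerator rationalization exactly as in Lemma \ref{uvwh}: at the boundary $\frac{d_2}{r}=a_1(a_2+d_1-1)$ the inner square root vanishes and $L_1^r<R_2^r$ collapses to $\sqrt{a_2+d_1}<1+\sqrt{1-a_2}$, i.e. $d_1<(1+\sqrt{1-a_2})^2-a_2=d_1^r$, which (the rationalized right-hand side being increasing in $d_2$) gives case (i); squaring the rearranged inequality for $d_1>d_1^r$ produces precisely $\frac{d_2}{r}>\frac{a_1}{4}\big(\frac{a_2+d_1}{1+\sqrt{1-a_2}}+1+\sqrt{1-a_2}\big)^2-a_1=d_2^r$, giving case (ii), while the same boundary analysis shows $d_2^r>a_1(a_2+d_1-1)$ when $d_1>d_1^r$ so that $S_1^r$ stays nonempty. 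Once overlap is secured, positive definiteness gives $\mathbf Z^{\mathrm T}\mathbb P_r\mathbf Z\ge\varepsilon_1|\mathbf Z|^2$, and (\ref{decay1r}) follows with $\varepsilon_r=\min\{\varepsilon_1,\eta_r/c\}$.
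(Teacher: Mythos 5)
Your proposal follows essentially the same route as the paper's proof: nonnegativity of $A_r$ via convexity of $s\mapsto s-1-\ln s$, the absorption $(1-a_1)\int_\Omega v-\int_\Omega v^2\le -a_1\int_\Omega v^2$ using $v\le 1$ and $a_1>1$, the same matrix $\mathbb P_r$ with $(2,2)$ entry $a_1\beta_r$, the same quadratic-in-$\eta_r$ reduction with discriminant factoring as $\Phi_r\Psi_r$, and the same root-interval overlap analysis yielding $d_1^r$ and $d_2^r$. The only detail worth adding is the explicit check (present in the paper) that the vertex condition $2(a_1\beta_r-\alpha^2)+\bigl(\tfrac{d_2}{r}\beta_r-\alpha d_1\bigr)\ge 0$ holds automatically once $\Phi_r(\beta_r)>0$ and $\Psi_r(\beta_r)>0$, so that the admissible $\eta_r$ is indeed positive.
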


We will  present the  proof of this lemma in details  at the end since it explains why the conditions on the parameters in Theorem $\ref{thm3}$ are  required.
Thanks to   Lemma \ref{uvwr}, the $L^\infty$ convergence of $w$ to zero is  established as follows.
\begin{lemma}\label{l2con_r}
	Suppose that assumptions of Theorem $\ref{thm3}$ hold, $(u,v,w)$ is the global solution of $(\ref{model})$-$(\ref{ini})$, then
	$$
	||w||_{L^\infty(\Omega)}\rightarrow 0\quad as\quad t\rightarrow\infty.
	$$
\end{lemma}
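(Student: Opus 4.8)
The plan is to mirror, almost verbatim, the argument already carried out for the heterogeneous state in Lemma \ref{l2con}, now specializing the limit to $\mathfrak w = 0$. First I would invoke Lemma \ref{uvwr}, which furnishes a nonnegative functional $E_r(t)$ together with the dissipation estimate $\frac{\mathrm d}{\mathrm dt}E_r(t)\leq -\varepsilon_r F_r(t)$, where $F_r$ is defined in $(\ref{fr})$. Integrating this differential inequality over $(0,\infty)$ and using $(\ref{positive_er})$ yields
$$
\int_0^{+\infty}\Big(\|u-1\|_{L^2}^2+\|v\|_{L^2}^2+\|w\|_{L^2}^2+\|\nabla w\|_{L^2}^2\Big)\,\mathrm dt\leq \frac{1}{\varepsilon_r}E_r(0)<+\infty,
$$
so that in particular $t\mapsto \|w(\cdot,t)\|_{L^2(\Omega)}^2$ is integrable on $(0,+\infty)$.

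The second step is to promote this time-integrability into genuine decay via Lemma \ref{bdd_convergence}, for which I need $t\mapsto\|w(\cdot,t)\|_{L^2(\Omega)}^2$ to be uniformly continuous. Since $v$ is uniformly bounded by $(\ref{v_upperbound})$ in Lemma \ref{local_existence}, the source term $c(v-w)$ in the $w$-equation is uniformly bounded, and a standard $L^p$ estimate combined with a Sobolev embedding gives constants $\mathcal C_1>0$ and $0<\alpha_n<1$ with
$$
\|w\|_{C^{\alpha_n,\alpha_n/2}(\bar\Omega\times(k,k+1])}\leq \mathcal C_1,\qquad \forall\,k\in\mathbb N.
$$
This uniform parabolic H\"older bound renders $t\mapsto\|w(\cdot,t)\|_{L^2(\Omega)}^2$ uniformly continuous, so Lemma \ref{bdd_convergence} forces $\|w\|_{L^2(\Omega)}\to 0$ as $t\to\infty$.

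Finally, I would upgrade the $L^2$ convergence to the desired $L^\infty$ convergence by applying Lemma \ref{wdecay_h} with the choice $\mathfrak w=0$, which is precisely one of the three cases that lemma was designed to cover. This completes the argument. I expect the only genuinely nontrivial ingredient to be the uniform-continuity step, i.e.\ obtaining the uniform H\"older bound on the moving windows $(k,k+1]$; everything else is a direct transcription of the heterogeneous case, with the constant steady state $w=0$ playing the role of $w^*$ and with $F_r$ tailored in Lemma \ref{uvwr} so that it controls both $\|w\|_{L^2}^2$ and $\|\nabla w\|_{L^2}^2$ with strictly positive weights. Since this is identical in spirit to the proof of Lemma \ref{l2con}, the write-up can be kept brief and may simply point to that proof for the repeated details.
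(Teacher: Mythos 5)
Your proposal is correct and follows exactly the route the paper intends: the paper omits the proof of this lemma precisely because it is a verbatim repetition of the proof of Lemma \ref{l2con}, with Lemma \ref{uvwh} replaced by Lemma \ref{uvwr} and with $\mathfrak w=0$ in Lemma \ref{wdecay_h}. All the ingredients you invoke (integrability of $F_r$ from the dissipation inequality, the uniform H\"older bound giving uniform continuity, Lemma \ref{bdd_convergence}, and the $L^2$-to-$L^\infty$ upgrade) are the same ones used there.
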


Again,  Lemma \ref{l2con_r} indicates that there exists
a smooth bounded positive function $\theta(t)$,  which decays to $0$ as $t\rightarrow\infty$ and satisfies
$$
w(x,t) \leq \theta(t),\quad x\in\Omega,\ t \geq 0.
$$
Thus we introduce the following auxiliary ODE system:
\begin{equation}\label{oder}
\begin{cases}
\dfrac{\mathrm d}{\mathrm dt}\underline u_r=\underline u_r\big[1-\underline u_r-a_2\bar v_r-d_1\vartheta(t)\big],&\quad t>0,\\
\dfrac{\mathrm d}{\mathrm dt}\bar v_r=r\bar v_r\big[1-a_1\underline u_r-\bar v_r\big],&\quad t>0,
\end{cases}
\end{equation}
with initial data
\begin{equation}\label{inioder}
\underline u_r(0)=\underline u^r_0:=\min\limits_{\bar\Omega}u_0,\quad
\bar v_r(0)=\bar v^c_0:=\max\limits_{\bar\Omega}v_0,
\end{equation}

Next, under the assumptions of Theorem \ref{thm3},   again parallel to Lemmas \ref{positveode} and \ref{ode_clamp_solution} in Section 3.2   we  derive the follwing  estimates related to the auxiliary ODE system $(\ref{oder})$-$(\ref{inioder})$
\begin{itemize}
	\item  $	0<\underline u_r(t)\leq 1,\  0<\bar v_r(t)\leq 1,  \quad t\geq 0\, ;$
	\item $	\underline u_r(t)\leq u(x,t)<1 , \ \
	0<v(x,t)\leq \bar v_r(t) ,\quad x\in\Omega,\ t \geq 0.$
\end{itemize}
Then since  the auxiliary ODE system $(\ref{oder})$ is also competitive,   similar to proof of Lemma \ref{ode_decay_c},  we have
\begin{equation*}
||u-1||_{L^\infty(\Omega)}
+||v||_{L^\infty(\Omega)}\rightarrow 0
\quad as\quad t\rightarrow\infty.
\end{equation*}
To complete the proof of Theorem \ref{thm3},   the last step is to show
$$
||u-1||_{L^\infty(\Omega)}
+||v||_{L^\infty(\Omega)}
+||w||_{L^\infty(\Omega)}\rightarrow 0\quad \text{exponentially as}\quad t\rightarrow\infty,
$$
by similar arguments in  handling  the homogeneous tumor state at the end of  Section 4.
We omit all the details since they are similar and simpler.

\medskip

It remains to prove Lemma \ref{uvwr}.

\begin{proof}[Proof of Lemma \ref{uvwr}]
	Thanks to the assumption $v_0 \leq 1$, by comparison principle of parabolic equations, we obtain that $v(x,t)\leq1$ for all $x\in \Omega$ and $t>0$. Straightforward computations show
	\begin{align}
		\dfrac{\mathrm d}{\mathrm dt}A_r(t)&=
    	-\int_\Omega (u-1)^2\,\mathrm dx
    	-a_2\int_\Omega(u-1)v\,\mathrm dx
    	-d_1\int_\Omega(u-1)w\,\mathrm dx,\label{dar}\\
    	\frac{1}{r}\dfrac{\mathrm d}{\mathrm dt}B_r(t)
    		&=-a_1\int_\Omega(1-u)v\,\mathrm dx
    		-(a_1-1)\int_\Omega v\,\mathrm dx
    		-\int_\Omega v^2\mathrm dx
    		-\frac{d_2}{r}\int_\Omega vw\,\mathrm dx\nonumber\\
    		&\leq -a_1\int_\Omega(1-u)v\,\mathrm dx
    		-a_1\int_\Omega v^2\,\mathrm dx
    		-\frac{d_2}{r}\int_\Omega vw\,\mathrm dx,\label{dbr}\\
    		\frac{1}{c}\dfrac{\mathrm d}{\mathrm dt}C_r(t)&=
    		-\frac{1}{c}\int_\Omega|\nabla w|^2\,\mathrm dx
    		+\int_\Omega vw\,\mathrm dx
    		-\int_\Omega w^2\,\mathrm dx.\label{dcr}
	\end{align}
    By differentiating $(\ref{er})$ and substituting $(\ref{dar})$-$(\ref{dcr})$ into it, we obtain

    \begin{equation}\label{er_differentiate}
    		\dfrac{\mathrm d}{\mathrm dt}E_r(t)
    			\leq -\int_\Omega\mathbf Z^{\mathrm T}\mathbb P_r \mathbf Z\,\mathrm dx
    			-\dfrac{\eta_r}{c}\int_\Omega\left| \nabla w \right|^2\mathrm dx,
    	\end{equation}
    	where
    	\begin{equation}
    		\mathbb P_r=
    		\begin{pmatrix}\displaystyle
    			1 & \dfrac{a_2+a_1\beta_r}{2} & \dfrac{d_1}{2}\\
    			\dfrac{a_2+a_1\beta_r}{2} & a_1\beta_r & \dfrac{\dfrac{d_2}{r}\beta_r-
    			\eta_r}{2}\\
    			\dfrac{d_1}{2} & \dfrac{\dfrac{d_2}{r}\beta_r-
    			\eta_r}{2} & \eta_r
    		\end{pmatrix}
    	\end{equation}
    	and
    	$$
    	\mathbf Z=\left(u-1,v,w\right)^\mathbf T.
    	$$
    	
    Similar to the previous two cases, we claim that there exist two positive constants $\beta_r$ and $\eta_r$ such that $\mathbb P_r$ is positive definite if and only if there exists $\beta_r>0$ satisfies the following two inequalities simultaneously:
    \begin{numcases}{}
    		\Phi_r(\beta_r):=
    		-a_1^2\beta_r^2+
    		2\Big [\,2\Big(a_1+\dfrac{d_2}{r}\Big)-a_1(a_2+d_1)\,\Big  ]\beta_r-(a_2+d_1)^2>0,\label{group1r}\\
    		\Psi_r(\beta_r):=
    		-a_1^2\beta_r^2+2(2a_1 -a_1a_2)\beta_r-a_2^2>0\label{group2r}.
   	\end{numcases}
   	
   	For simplicity, we denote $\alpha:=\tfrac{1}{2}(a_2+a_1\beta_r)$. To verify our assertion, we just need to compute all the principal minors of $\mathbb P_r$:
   	$$
    	\mathbf {M^r_1}:=1,
    	$$
    	$$
    	\mathbf{M^r_2}:=
    	\begin{vmatrix}
    		1 & \alpha\\
    		\alpha & a_1\beta_r
    	\end{vmatrix}
    	=a_1\beta_r-\alpha^2
    	=\dfrac{1}{4}\left(
    	-a_1^2\beta_r^2+2(2a_1 -a_1a_2)\beta_r-a_2^2\,
    	\right)=\dfrac{1}{4}\Psi_r(\beta_r),
    	$$
   and $(\ref{group2r})$ holds if and only if $\mathbf{M^r_2}>0$. Now, we consider the discriminant of $\mathbb P_r$.
   	\begin{equation}\label{quadratic-r}
   		\begin{aligned}
   			&\det{\mathbb P_r}
   		=\frac{1}{4}
   		\left\{
   		-\eta_r^2
   		+2\Big(2(a_1\beta_r-\alpha^2)+\Big(\dfrac{d_2}{r}\beta_r-\alpha d_1\Big)\Big)\eta_r\right.\\
    	&\qquad\qquad\quad\left.+\Big(2\alpha d_1\dfrac{d_2}{r}\beta_r-\Big(\dfrac{d_2}{r}\beta_r\Big)^2-a_1d_1^2\beta_r\Big)
   		\right\}.
   		\end{aligned}
   	\end{equation}
   	Notice from $(\ref{group2r})$ and $a_1>1$ that
    	$$
    	2\alpha d_1\dfrac{d_2}{r}\beta_r-\Big(\dfrac{d_2}{r}\beta_r\Big)^2-a_1d_1^2\beta_r
    	<-\beta_r\Big(\dfrac{d_2}{r}\alpha-d_1\Big)^2 \leq 0,
    	$$
    	fundamental properties of quadratic polynomials implies that there exists $\eta_r>0$ such that $\det{\mathbb P_r}>0$ if and only if the following situation holds:
    	\begin{equation}\label{stronger_r}
    		\begin{cases}
    			{\Delta_r}>0,\\
    			2(a_1\beta_r-\alpha^2)+\Big(\dfrac{d_2}{r}\beta_r-\alpha d_1\Big)\geq 0,
    		\end{cases}
    	\end{equation}
   	where ${\Delta_r}$ is the discriminant of the quadratic $(\ref{quadratic-r})$. By calculating this discriminant and substituting $\alpha=\tfrac{1}{2}(a_2+a_1\beta_r)$ into it, we have
   	$$
   	\Delta_r
   	=16\Big[(a_1+d_2)\beta_r-\alpha d_1-\frac{1}{4}d_1^2\,\Big]
   	(a_1\beta_r-\alpha^2)
   	=\Phi_r(\beta_r)\Psi_r(\beta_r)
   	$$
   	Since we already have $(\ref{group2r})$, it follows from the above equations that ${\Delta_r}>0$ if and only if $(\ref{group1r})$ holds.
    	Also, when $\Delta_r>0$ and $\Psi_r(\beta_r)>0$,
    	$$
    	2(a_1\beta_r-\alpha^2)+\Big(\dfrac{d_2}{r}\beta_r-\alpha d_1\Big)
    	>\Big(a_1+\frac{d_2}{r}\Big)\beta_c-\alpha^2 -\alpha d_1
    	>\dfrac{1}{4}\Phi_c(\beta_c)>0,
    	$$
    	which implies the second equation in $(\ref{stronger_r})$ is automatically satisfied.
    	Therefore, on the basis of $(\ref{group2r})$, there exist two positive constants $\beta_r$ and $\eta_r$ such that $\det{\mathbb P_r}>0$ if and only if $(\ref{group1r})$ holds. Summing up the discussion above, our assertion has been proved.
   	
   	Now, it remains to show that under the assumptions of Theorem $\ref{thm3}$, there exists $\beta_r>0$ which satisfies $(\ref{group1r})$ and $(\ref{group2r})$ simultaneously. For this purpose, we denote the positive solution of $(\ref{group1r})$ as $S^r_1:=\left((L^r_1)^2,\, (R^r_1)^2\right)$ and positive solution of $(\ref{group2r})$ as $S^r_2:=\left( (L^r_2)^2,\,(R^r_2)^2\right)$.
    	We assume for now that we have
    	\begin{equation}\label{d2-new-r}
    		\dfrac{d_2}{r} > a_1(a_2+d_1-1),
    	\end{equation}
    	which is already contained in the case $(\ref{d2_r_1})$.
    	Thanks to $(\ref{d2-new-r})$, $S^r_1$ is not empty. On the other hand, $S^r_2$ is not empty due to $a_2<1$.
    	Then, direct computations show that
    	$$
    	\begin{aligned}
    	&L^r_1=\dfrac{1}{a_1}\left (\sqrt{a_1+\frac{d_2}{r}}-\sqrt{a_1+\frac{d_2}{r}-a_1(a_2+d_1)}\,\right ) ,\\
    	&R^r_1=\dfrac{1}{a_1}\left (\sqrt{a_1+\frac{d_2}{r}}+\sqrt{a_1+\frac{d_2}{r}-a_1(a_2+d_1)}\,\right ),\\
    	&L^r_2=\dfrac{1}{\sqrt{a_1}}\big (1-\sqrt{1-a_2}\,\big ) ,\quad
    	R^r_2=\dfrac{1}{\sqrt{a_1}}\big (1+\sqrt{1-a_2}\,\big ).
    	\end{aligned}
    	$$
    	Since $\frac{d_2}{r}>a_1(a_2+d_1-1)$, we have
    	$$
    	 R^r_1
    	 > \dfrac{\sqrt{a_2+d_1+1}}{\sqrt{a_1}}
    	 > \dfrac{1}{\sqrt{a_1}}
    	 > L^r_2,
    	$$
    	we only need $L^r_1 < R^r_2$, namely
    	\begin{equation}\label{inequality_r}
    		\sqrt{a_1+\frac{d_2}{r}}-\sqrt{a_1+\frac{d_2}{r}-a_1(a_2+d_1)}
    		< \sqrt{a_1}
    		\big (1+\sqrt{1-a_2}\,\big ),
    	\end{equation}
    	so that there is overlap part between $S^r_1$ and $S^r_2$.
   	Recall that we also need $(\ref{d2-new-r})$, hence, in the following part, we verify that $(\ref{d2-new-r})$ and $(\ref{inequality_r})$ hold under the assumption of Theorem $\ref{thm3}$.
   	
   	First, we consider the case $(\ref{d2_r_1})$. Since in this case we already have  $(\ref{d2-new-r})$, it remains to show that when $d_1\leq d_1^r$, where $d^h_1$ is defined in $(\ref{d1r})$, we can derive $(\ref{inequality_r})$ from $(\ref{d2-new-r})$. By numerator rationalization of $(\ref{inequality_r})$, we obtain
   	\begin{equation}\label{rationed-r}
   		\frac{a_1(a_2+d_1)}{1+\sqrt{1-a_2}}
   		<\sqrt{a_1+\frac{d_2}{r}}+\sqrt{a_1+\frac{d_2}{r}-a_1(a_2+d_1)}.
   	\end{equation}
   	Substituting $\frac{d_2}{r}=a_1(a_2+d_1-1)$ into the inequality above yields
    	$$
    	\sqrt{a_2+d_1}<1+\sqrt{1-a_2} ,
    	$$
    	which is equivalent to $d_1 < d^r_1$. Since $\frac{d_2}{r}$ is  strictly smaller than $a_1(a_2+d_1-1)$, we obtain that $(\ref{rationed-r})$ still holds when $d_1=d_1^r$.
    	Notice that the right hand side of $(\ref{rationed-r})$ increases in $d_2$, we obtain that $(\ref{inequality_r})$ still holds when $(\ref{d2-new-r})$ is satisfied.
   	
   	Next, we demonstrate that in the case $(\ref{d2_r_2})$, we have $(\ref{d2-new-r})$ and $(\ref{inequality_r})$. Direct computations show that $(\ref{inequality_r})$ is equivalent to $\frac{d_2}{r}>d_2^r$, where $d_2^r$ is defined in $(\ref{d2r})$. Hence, we have $(\ref{inequality_r})$ in the case $(\ref{d2_r_2})$. On the other hand, it follows from the discussion in $d_1\leq d_1^r$ part that when $d_1>d_1^h$, there is $d_2^h>a_2+d_1-1$. Hence, in the case $(\ref{d2_r_2})$ we have $(\ref{d2-new-r})$.
   	
   	Summarizing the discussion above, we draw out that  assumptions of Theorem $\ref{thm3}$ suffice to show the existence of positive $\beta_r$ and $\eta_r$ such that $\mathbb P_r$ is positive definite. By the definition of positive definite matrix, there exists a constant $\varepsilon_3>0$ such that
        $$
    	\mathbf Z^{\mathrm T}\mathbb P_r \mathbf Z\geq
    	\varepsilon_3 |\mathbf Z|^2.
    	$$
    	Substituting it into $(\ref{er_differentiate})$, we have
    	\begin{equation}\label{erinquality}
    		\dfrac{\mathrm d}{\mathrm dt}E_r(t)
    		\leq -\varepsilon_3\int_\Omega|\mathbf Z|^2\,\mathrm dx
    		-\dfrac{\eta_r}{c}\int_\Omega\left| \nabla w \right|^2\mathrm dx
    	\leq -\varepsilon_r F_r(t),
    	\end{equation}
    	where $\varepsilon_r=\min\{ \varepsilon_3,\frac{\eta_r}{c} \}$.	
\end{proof}

\end{document}